\newcommand{\rob}{\color{blue}}
\newcommand\robout{\bgroup\markoverwith {\textcolor{blue}{\rule[0.5ex]{2pt}{0.4pt}}}\ULon} 
\newcommand\prestonout{\bgroup\markoverwith {\textcolor{magenta}{\rule[0.5ex]{2pt}{0.4pt}}}\ULon} 
\dedicatory{In memory of Joël Bellaïche}
\newcounter{qcounter}
\define\bP{\mathbb{P}}
\define\isoto{\xrightarrow{\sim}}
\define\onto{\twoheadrightarrow}
\define\coker{\mathrm{coker}}
\define\Spec{\mathrm{Spec}}
\define\sF{\mathcal{F}}
\define\E{\mathcal{E}}
\newcommand{\dia}[1]{\langle #1 \rangle}
\newcommand{\ttmat}[4]{\left( \begin{array}{cc}
#1 & #2 \\
#3 & #4
\end{array}
\right)}
\newcommand{\Z}{\mathbb{Z}}
\newcommand{\Q}{\mathbb{Q}}
\newcommand{\C}{\mathbb{C}}
\newcommand{\F}{\mathbb{F}}
\newcommand{\fH}{\mathfrak{H}}
\newcommand{\h}{\mathfrak{h}}
\newcommand{\sO}{\mathcal{O}}
\newcommand{\I}{\mathcal{I}}
\newcommand{\p}{\mathfrak{p}}
\newcommand{\m}{\mathfrak{m}}
\newcommand{\cL}{\mathcal{L}}
\newcommand{\Hom}{\mathrm{Hom}}
\newcommand{\End}{\mathrm{End}}
\newcommand{\zinf}{\{0,\infty\}}
\DeclareMathOperator{\rank}{rank}
\newcommand{\Qinf}{\Q_{\infty}}
\newtheorem{thm}{Theorem}[section]
\newtheorem{prop}[thm]{Proposition}
\newtheorem{lem}[thm]{Lemma}
\newtheorem{conj}[thm]{Conjecture}
\theoremstyle{definition}
\newtheorem{defn}[thm]{Definition}
\newtheorem{exmp}[thm]{Example}
\theoremstyle{remark}
\newtheorem{rem}[thm]{Remark}
\newcommand{\lb}{{[\![}}
\newcommand{\rb}{{]\!]}}
\define\ord{\mathrm{ord}}
\define\Eis{\mathrm{Eis}}
\define\cusp{\mathrm{cusp}}
\define\cC{\mathcal{C}}
\define\Ann{\mathrm{Ann}}
\define\bT{\mathbb{T}}
\newcommand{\Tm}{\bT_{\m}}
\newcommand{\Tmo}{\bT^0_{\m}}
\newcommand{\Tmv}[1]{\bT_{\m,#1}}
\newcommand{\Tmvo}[1]{\bT^0_{\m,#1}}
\newcommand{\Tmtv}[1]{\bT^{\rm tame}_{\m,#1}}
\newcommand{\Tmtvo}[1]{\bT^{\rm 0,tame}_{\m,#1}}
\newcommand{\ds}{\displaystyle}
\newcommand{\Zpx}{\Z_p^\times}
\newcommand{\md}{\rm mod}
\newcommand{\aL}{\mathcal L}
\newcommand{\al}{L}
\newcommand{\alcv}[1]{L^{0}_{#1}}
\newcommand{\alc}{L^{0}}
\renewcommand{\L}{\Lambda}
\renewcommand{\Im}{\I_{\m}}
\newcommand{\Imk}{\I_{\m,k}}
\let\c@equation\c@thm
\numberwithin{equation}{section}
\title{Iwasawa invariants in residually reducible Hida families}
\author{Robert Pollack}
\address{University of Arizona}
\email{rpollack@arizona.edu}	
\author{Preston Wake}
\address{Michigan State University}
\email{wakepres@msu.edu}
\date{}
\begin{document}

\maketitle

\begin{abstract}
We study the variation of $\mu$-invariants of modular forms in a cuspidal Hida family in the case that the family intersects an Eisenstein family.  We allow for intersections that occur because of ``trivial zeros'' (that is, because $p$ divides an Euler factor) as in Mazur's Eisenstein ideal paper, and pay special attention to the case of the $5$-adic family passing through the elliptic curve $X_0(11)$.
\end{abstract}

\section{Introduction}

\subsection{Congruences from $L$-values} In \cite{BP2018}, the first author and Bella\"iche studied the $\mu$-invariants of Hida families of cuspidal eigenforms which admit congruences with Eisenstein series.  The congruences studied there were ones that arose from $p$-divisibilities of $L$-values, in this case, Bernouli numbers.  For instance, consider the case of tame level $N=1$ where $p \mid B_k$, the $k$-th Bernoulli number.  In this situation, there is a cuspidal Hida family which has non-trivial intersections with an Eisenstein Hida family with the intersections occuring at the $p$-adic weights which are zeroes of the $p$-adic $\zeta$-function (which is non-trivial as $p \mid B_k$).  

Assuming certain Hecke algebras were Gorenstein, it was shown in \cite{BP2018} that the $\mu$-invariants in the cuspidal family blew up as one approached these intersection points.  Moreover, precise formulas were conjectured about the values of these $\mu$-invariants which in favorable situations ({\it i.e.}\ when the cuspidal Hida family was rank 1 over weight space) were given simply as the $p$-adic valuation of certain special values of the $p$-adic $\zeta$-function.  When $U_p-1$ generates the Eisenstein ideal, these conjectures were shown to hold for the branch of the $\L$-function with trivial tame character, and the $\lambda$-invariants in these families were shown to be identically zero.

\subsection{Congruences from Euler factors}
In this paper, we aim to treat the analogous situation where congruences arise because of divisibility of Euler factors.  For example, consider the case of $N$ a prime such that $N \equiv 1 \pmod{p}$.  By Mazur's famous result \cite{mazur1978}, as long as $p > 3$, there is always a cuspidal eigenform in $S_2(\Gamma_0(N))$ congruent to the unique ordinary Eisenstein series of weight 2 and level $N$.  The most famous example of this congruence is when $p=5$ and $N=11$.  In this case, there is a unique such cuspidal eigenform and it is exactly the modular form corresponding to the elliptic curve $X_0(11)$.  

The ordinary $p$-stabilization of this form to level $Np = 55$ lives in a Hida family that is rank 1 over weight space.  However, there is a key difference in this situation from the case treated in \cite{BP2018} where there was a unique Eisenstein family to consider.  In our situation, there are two Eisenstein families; namely, one where $U_{N} = 1$ for all weights and one where $U_{N}=N^{k-1}$.  Since $N \equiv 1 \pmod{p}$, these two families are congruent modulo $p$.  Moreover, the localization of the Hida Hecke algebra which corresponds to these two families together with the cuspidal family is not Gorenstein and the methods of \cite{BP2018} do not apply.

To avoid this problem, we employ a method already used in \cite{ohta2014} and \cite{PG5} where we remove $U_N$ from our Hecke algebra and replace it with $w_N$, the Atkin-Lehner involution.  While the $U_N$-eigenvalues of the two Eisenstein families are congruent, the $w_N$-eigenvalues of these families are $1$ and $-1$, and the congruence is broken.  (It is verified in Appendix \ref{appendix:Hida} that Hida theory still works as expected for these modified Hida Hecke algebras.)

We are interested in the Eisenstein family with $w_N$-eigenvalue $-1$ as our cuspidal family has $w_N$-eigenvalue equal to $-1$.  The localization of the modified Hecke algebra corresponding to this Eisenstein family together with our cuspidal family turns out to be Gorenstein and we are good shape to generalize the methods of \cite{BP2018}. 

We state our results in the specific case of $p=5$ and $N=11$.  To set up a little bit of notation, let $L_p^+(k,s)$ denote the (plus) two-variable $p$-adic $L$-function over $\Qinf$, the cyclotomic $\Z_p$-extension, corresponding to the Hida family through $X_0(11)$.   

\begin{thm}
\label{thm:55}
Let $N=11$, $p=5$, and let $k$ be an integer with $k \equiv 2 \pmod{4}$.  Let $f_k$ denote the unique ordinary eigenform in $S_k(\Gamma_0(Np))$ whose residual representation is $1 \oplus \omega$, where $\omega$ is the mod-$p$ cyclotomic character.  Then
$$
L_p^+(k,s) = (N^{k/2}-1) \cdot U(k,s)
$$
where $U(k,s)$ is a unit power series in $k$ and $s$.  In particular,  the Iwasawa invariants of $f_k$ are given by
\begin{enumerate}
\item $\mu(f_k) = \mathrm{val}_p(a_p(f_k)-1) = 1 + \mathrm{val}_p(k)$ and 
\item $\lambda(f_k) = 0$.
\end{enumerate}
\end{thm}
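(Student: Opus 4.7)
The plan is to apply the Eisenstein-ideal strategy of \cite{BP2018}, adapted to the modified Hecke algebra framework (with $w_N$ in place of $U_N$) outlined in the introduction. The strategy divides into two phases: first, establish an abstract factorization $L_p^+(k,s) = \xi \cdot U(k,s)$, where $\xi$ generates the Eisenstein ideal in the cuspidal quotient and $U$ is a unit in the two-variable Iwasawa algebra; second, identify $\xi$ explicitly, up to a unit, with $N^{k/2}-1$.

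For the first phase, let $\bT'$ denote the modified Hida Hecke algebra of tame level $N = 11$, and let $\m$ be the maximal ideal corresponding to the cuspidal Hida family through the ordinary $p$-stabilization of $X_0(11)$ together with the $w_N = -1$ Eisenstein family. As explained in the introduction, $\bT'_\m$ is Gorenstein --- this is the payoff of the $w_N$ modification, since in the unmodified algebra the two Eisenstein families congruent mod $p$ would obstruct Gorenstein-ness. Since the cuspidal Hida family is rank $1$ over weight space, the cuspidal quotient of $\bT'_\m$ is identified with $\Lam = \Z_p\lb T\rb$, the Eisenstein ideal $\Im \subset \bT'_\m$ is principal, and the two-variable analog of the main theorem of \cite{BP2018} produces the factorization $L_p^+(k,s) = \xi \cdot U(k,s)$ with $\xi$ the image of a generator of $\Im$ in $\Lam$ and $U$ a unit.

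For the second phase, we identify $\xi|_k$ with $N^{k/2} - 1$ up to a unit. Because $U_p - 1 \in \Im$ maps to $a_p(f_k) - 1$ in the weight-$k$ specialization of the cuspidal quotient, $\xi|_k$ and $a_p(f_k) - 1$ generate the same ideal in $\Z_p$. A weight-$2$ anchor --- $a_5(X_0(11)) = 1$, so the unit root $\alpha$ of $x^2 - x + 5$ satisfies $\mathrm{val}_5(\alpha - 1) = 1 = \mathrm{val}_5(N-1)$ --- matches the prediction at $k=2$, and the propagation to general weight comes from the Atkin-Lehner normalization of the $w_N = -1$ Eisenstein family, which forces the $N^{k/2}$ (rather than $N^{k-1}$) interpolation of the Eisenstein-cuspidal congruence.

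Given the factorization $L_p^+(k,s) = (N^{k/2} - 1) \cdot U(k,s)$ with $U$ a unit, the invariants follow immediately: $\mu(f_k) = \mathrm{val}_p(N^{k/2} - 1) = 1 + \mathrm{val}_p(k)$ by lifting-the-exponent (using $N \equiv 1 \pmod p$ and $p$ odd), and $\lambda(f_k) = 0$ since $N^{k/2} - 1$ is a constant in $s$. The main obstacle, I expect, is the second phase: showing that the Eisenstein-ideal generator specializes to $N^{k/2}-1$ (rather than some other element in the same mod-$p$ class) requires tracking the explicit Atkin-Lehner normalization of the Eisenstein family through the Hida deformation, and this is where the $w_N$-modification is not merely a technical convenience but is essential to making the factor $N^{k/2}$ (as opposed to $N^{k-1}$) appear in the final formula.
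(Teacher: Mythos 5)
Your architecture is the paper's own: pass to the $w_N$-modified Hecke algebra so that the relevant localization is Gorenstein, turn the $L$-symbol into a function divisible by the Eisenstein ideal, and identify the resulting factor with $N^{k/2}-1$. But both of your phases have genuine gaps. In phase 1, the Eisenstein-ideal divisibility (the analogue of \cite[Theorem 3.14]{BP2018}, which is Theorem \ref{thm:Mazur case}(3) here) gives only $L_p^+(\m,\omega^0)=t\cdot L^0$ for a generator $t$ of $\I_\m$; it does \emph{not} give that $L^0$ is a unit --- that is an open conjecture in general (Conjecture \ref{conj:Mazur mu conj}). The unit-ness is proved only on the trivial-tame-character branch and only because $U_p-1$ \emph{generates} $\I_\m$, via two inputs you never invoke: (a) the interpolation formula at the trivial cyclotomic character, which gives $L_p^+(\m,\mathbf{1})=(U_p-1)\cdot v$ with $v$ a unit (the argument of \cite[Theorem 3.15]{BP2018}); since values of a power series lie in its content ideal, this forces the cofactor to be a unit (Lemma \ref{lem: lam(c) generates}); and (b) the numerical criterion (Theorem \ref{thm:U_p-1}, via \cite{Wake-JEMS}) that $U_p-1$ generates $\I_\m$ exactly when $p$ is not a $p$-th power mod $N$ --- for $(N,p)=(11,5)$, that $5$ is not a fifth power mod $11$. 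Relatedly, your phase-2 inference ``since $U_p-1\in\I_\m$ maps to $a_p(f_k)-1$, the generator $\xi|_k$ and $a_p(f_k)-1$ generate the same ideal'' is a non sequitur unless $U_p-1$ is itself a generator.

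In phase 2, the ``weight-$2$ anchor plus propagation via the Atkin--Lehner normalization'' is not an argument: a valuation identity at $k=2$ does not propagate to all weights, and you correctly flag this as the obstacle without resolving it. The paper's mechanism is algebraic: the fiber-product decomposition $\Tm\cong\Tmo\times_{\Lambda/\E^-(T_0)\Lambda}\Lambda$ (Lemma \ref{lem:fiber prod}) shows that the image of $\I_\m$ in the cuspidal quotient is generated by the constant term $\E^-(T_0)$ of the Eisenstein family; in the rank-one case $\Tmo=\Lambda$ this is Example \ref{exmp: rank 1}, so $a_p(f_k)-1$ and $\E^-(T_0)|_{k}$ generate the same ideal of $\Z_p$. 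The explicit computation $w_N E^{\ord}_k = N^{k/2}E^{\ord}_k(Nz)$ in Section \ref{sec:eis} produces the $w_N=-1$ eigenfamily with constant term $-(1-N^{k/2})(1-p^{k-1})\frac{B_k}{2k}$, and since $p\nmid B_2$ and $1-p^{k-1}$ is a unit this has valuation $\mathrm{val}_p(N^{k/2}-1)=1+\mathrm{val}_p(k)$. So your instinct that the Atkin--Lehner normalization is where $N^{k/2}$ enters is right, but the bridge from that constant term to $\mathrm{val}_p(a_p(f_k)-1)$ --- the fiber-product/characteristic-polynomial argument (Lemmas \ref{lem:fiber prod}, \ref{lem:structure of T}, \ref{lem:mu_add}) --- is the step your proposal is missing.
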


The theorem implies that as one moves closer and closer to weight $0$ in this Hida family, the $\mu$-invariants blow up linearly.   Theorem \ref{thm:55} is a special case of Theorem \ref{thm:general} below.

\subsection{The more general case}
We now discuss the key ingredients that go into Theorem \ref{thm:55} to unravel what is special about this case with $(N,p)=(11,5)$ and weight $2$, and what can be said in greater generality.   To this end, we now let $N$ and $p$ be prime numbers with $N \equiv 1 \pmod{p}$ and $p \ge 5$, and let $k_0$ be an even integer with $0<k_0<p-1$.
Let $\Tm$ denote the completion of the Hida Hecke algebra with tame level $N$ at the maximal ideal given by the residual representation $\overline{\rho} = 1 \oplus \omega^{k_0-1}$ and with Atkin--Lehner sign $w_N=-1$; let $\Tmo$ denote the quotient of $\Tm$ that acts faithfully on cuspforms.  Let $\Im \subseteq \Tm$ denote the Eisenstein ideal.

The following facts hold in the setting of Theorem \ref{thm:55}, where $p=5$, $N=11$, and $k_0=2$:
\begin{enumerate}
\item \label{part:goren}$\Tm$ and $\Tmo$ are Gorenstein
\item \label{part:Up-1}$\Im$ is generated by $U_p-1$
\item \label{part:bern}$p \nmid \frac{B_{k_0}}{k_0}$
\item \label{part:cong}$N \equiv 1 \pmod{p}$ but $N \not \equiv 1 \pmod{p^2}$
\item \label{part:rank}the rank of $\Tmo$ over the Iwasawa algebra $\Lambda$ equals 1
\end{enumerate}
We will now examine each of these facts in turn and how (if at all) they are used in the proof of Theorem \ref{thm:55}; then we will state the general version Theorem \ref{thm:general} of the theorem.
We first note that \eqref{part:Up-1} implies \eqref{part:goren} (see Lemma \ref{lem: both goren}).  In fact,  \eqref{part:goren} holds whenever $\Im$ is principal.

Condition \eqref{part:Up-1} is crucial to obtain as precise a result as in Theorem \ref{thm:55}.  However, we make use of several recent results \cite{Deo,Wake-JEMS,PG5} about the structure of these Hecke algebras which provide a numerical criterion that is equivalent to \eqref{part:Up-1}, as we now explain. 
Let $\log_N : \F_N^\times \to \F_p$ be a surjective homomorphism (noting that $p \mid( N-1)$, this is a reduction modulo $p$ of a choice of discrete logarithm). Further, let $\zeta$ denote a primitive $p$-th root of unity in $\F_N^\times$.
As recalled in Section \ref{sec:goren} below,  the results of \cite{Deo,Wake-JEMS,PG5} imply that if
\begin{equation}
\label{eq:log 1-zeta sum}
\sum_{i=1}^{p-1} i^{k_0-2} \log_N(1-\zeta^i) \neq 0 \text{~in~} \F_p,
\end{equation}
then $\Im$ is a principal ideal, and moreover, if further
$$
p \mbox{ is not a } p \mbox{-th power modulo }N,
$$
then $U_p-1$ generates $\Im$.  Note that when $k_0 \equiv 2 \pmod{p-1}$,  the sum appearing in the first condition $\sum_{i=1}^{p-1} i^{k_0-2} \log_N(1-\zeta^i)$ simplifies to $\log_N(p)$,  so the first condition is equivalent to the second condition.  In particular, in the case $(N,p,k_0)=(11,5,2)$, the fact that 5 is not a 5-th power modulo 11 implies \eqref{part:Up-1} (and thus \eqref{part:goren}) holds.

Moving on, condition \eqref{part:bern} is necessary to ensure that only Euler-factor-type congruences appear. If \eqref{part:bern} fails, then there will be a mix of congruences from $L$-values (as in \cite{BP2018}) and from Euler factors and the situation is more complicated. Although it seems very interesting to understand this situation, it is outside the scope of this paper.

Condition \eqref{part:cong} can be relaxed to $N \equiv 1 \pmod{p}$, but this slightly changes the formula for $\mu(f_k)$ given in Theorem \ref{thm:55}. In this generality, the formula should be $\mu(f_k) = \mathrm{val}_p(N^{k/2}-1)$,  and
$$
\mathrm{val}_p(N^{k/2}-1) = \mathrm{val}_p(N-1) + \mathrm{val}_p(k).
$$ 
When $N \not \equiv 1 \pmod{p^2}$,  this gives the formula $\mu(f_k) = 1+ \mathrm{val}_p(k)$ found in Theorem \ref{thm:55}.

Condition \eqref{part:rank} is important in that without it one cannot simply speak of $f_k$ as there is no longer a unique cuspidal eigenform in our Hida family in each weight.  Further, without this condition, the crossing points of the Eisenstein family and the cuspidal family cannot be as finely controlled.  Nonetheless, we still have a two-variable $p$-adic $L$-function which simply equals $U_p-1$ up to a unit.  Thus we can obtain a formula for the $\mu$-invariants in terms of the $U_p$-eigenvalues of each form, but we can only give a formula that only depends on $k$ and $N$ for the sum of the $\mu$-invariants of all the forms in weight $k$ in our Hida family.

We note that the same results on the structure of Hecke algebras \cite{Deo,Wake-JEMS,PG5,PG3} provide a numerical condition that forces condition \eqref{part:rank} to hold.  (see \cite[Corollary C]{Deo}).  Namely,  assuming \eqref{eq:log 1-zeta sum} and that $p$ satisfies Vandiver's conjecture, $\Tmo$ has rank 1 over $\Lambda$ if and only if
$$
\prod_{i=1}^{N-1} i^{\left(\sum_{j=1}^{i-1} j^{k_0-1}\right)} \mbox{ is not a } p \mbox{-th power modulo }N
$$
(see \cite[Corollary C]{Deo}).

For a particular weight $k$,  consider the condition that the specialization $\bT^0_{\m,k}$ of $\bT^0_\m$ to weight $k$ is a domain.  This is more general than the condition that $\Tmo$ have rank 1 over $\Lambda$, since $\bT^0_{\m,k}=\Z_p$ in that case.  In other words,  the assumption that $\Tmo$ has rank 1 over $\Lambda$ implies that there be a single cuspform congruent to the Eisenstein series in weight $k$, whereas the assumption that $\bT^0_{\m,k}$ is a domain merely implies that there is a single local-Galois-conjugacy class of such forms.
When $\bT^0_{\m,k}$ is a domain,  the fact that the conjugate forms have the same Iwasawa invariants allows us completely control the situation.  On the one hand, when $\bT^0_{\m,k}$ is not equal to $\Z_p$, we know of no numerical criterion that forces it to be a domain.  But on the other hand, examples where it is not a domain seem rare.   Using Sage \cite{sagemath} we computed the following:\ when $p=5$ and $k_0=2$, there are 163 primes $N$ less than 5000 for which $N \equiv 1 \pmod{5}$.  Of these, 48 have $\Tmo$ with rank greater than 1, but only 6 have more than one Galois conjugacy class of forms in weight 2.

We now state our main result in more general terms.  For this, we require some notation that will be discussed in more detail in Section \ref{sec:normalizations}.
Let $L^+_p(\m)$ denote the (plus) two-variable $p$-adic $L$-function attached to our Hida family,  and write $L^+_p(\m,\omega^0)$ for its branch corresponding to $\Qinf$, the cyclotomic $\Z_p$-extension---that is, the branch with trivial tame character.
For an eigenform $g$ in our Hida family, write $\varpi_g$ for a uniformizer of the normalization of the ring generated by the Hecke eigenvalues of $g$. 

\begin{thm}
\label{thm:general}
Assume that
\begin{enumerate}
\item $p \nmid \frac{B_{k_0}}{k_0}$, 
\item $\ds \sum_{i=1}^{p-1} i^{k_0-2} \log_N(1-\zeta^i) \neq 0$ in $\F_p$, and
\item $p$ is not a $p$-th power modulo $N$.
\end{enumerate}
Then $L^+_p(\m,\omega^0)$ has the simple form
$$
L^+_p(\m,\omega^0) = (U_p-1) \cdot U
$$
where $U$ is a unit. In particular, for every cuspform $g$ that corresponds to a height-one prime of $\Tmo$,
$$
\mu(g) = \ord_{\varpi_g}(a_p(g)-1) \mbox{ and }
\lambda(g) = 0.
$$
Further, for every integer $k$ with $k \equiv k_0 \pmod{p-1}$,
$$
\sum_g \mu(g) = \mathrm{val}_p(N-1) + \mathrm{val}_p(k)
$$
where the sum is over all cuspfoms $g$ that correspond to height-one primes of $\bT_{\m,k}$.
\end{thm}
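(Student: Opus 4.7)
The plan is to establish the factorization $L^+_p(\m,\omega^0) = (U_p-1)\cdot U$ as the central point, from which both the claim about individual invariants and the sum formula follow by specialization. The overall strategy mirrors \cite{BP2018}, with the Euler factor $U_p-1$ playing the role that a Bernoulli number plays there.

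First I would invoke the input from \cite{Deo,Wake-JEMS,PG5} summarized in the introduction: hypotheses (2) and (3) together imply that $\Im \subset \Tm$ is principal and generated by $U_p-1$, and hence (Lemma \ref{lem: both goren}) that $\Tm$ and $\Tmo$ are Gorenstein. The modified Hida theory verified in the Appendix then provides a construction of the two-variable $p$-adic $L$-function $L^+_p(\m)$ as an element of $\Tmo\hat\otimes\Lambda$, with $\Lambda$ the cyclotomic Iwasawa algebra, and yields its $\omega^0$-branch $L^+_p(\m,\omega^0)$.

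The crux, and the main obstacle, is to show that $L^+_p(\m,\omega^0)$ generates the same ideal as $U_p-1$ inside $\Tmo\hat\otimes\Lambda$; once this is done, Gorenstein-ness promotes the equality of ideals to the unit-factor identity $L^+_p(\m,\omega^0) = (U_p-1)\cdot U$. Adapting \cite{BP2018}, the plan is to track $L^+_p(\m,\omega^0)$ through an Ohta/Mazur--Kitagawa-style Eisenstein constant-term map: the output factors, up to a unit, as the product of the Euler factor $U_p-1$ and a Kubota--Leopoldt $L$-value contribution. Hypothesis (1), $p \nmid B_{k_0}/k_0$, together with the Ferrero--Washington theorem ($\mu=0$ for Kubota--Leopoldt), forces the $L$-value factor to be a global unit in $\Lambda^\times$, leaving only the Euler factor. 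This step is the technical heart of the argument and the reason the modified Hecke algebra machinery of the Appendix is needed: without replacing $U_N$ by $w_N$, Gorenstein-ness fails and the method collapses.

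For the consequences, specialize. At a height-one prime $\wp_g \subset \Tmo$, the factorization becomes $L_p(g) = (a_p(g)-1)\cdot u_g$ in $\sO_g\lb\Gamma\rb$ with $u_g$ a unit, which immediately gives $\mu(g) = \ord_{\varpi_g}(a_p(g)-1)$ and $\lambda(g) = 0$. For the sum at weight $k$, specialize to $\bT^0_{\m,k}\lb\Gamma\rb$ and pass to norms down to $\Z_p\lb\Gamma\rb$: the $U$-factor contributes a unit, and $\sum_g \mu(g)$ equals the $p$-adic valuation of the norm of the image of $U_p-1$ in $\bT^0_{\m,k}$ (noting that the residue fields at the height-one primes are all $\F_p$ by the residual reducibility hypothesis). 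An explicit computation of the constant term at the cusp $0$ of the $w_N=-1$, weight-$k$ Eisenstein series identifies this congruence number with $N^{k/2}-1$ up to a unit, and the elementary identity $v_p(N^{k/2}-1) = v_p(N-1)+v_p(k)$ (valid for $p\mid N-1$ and $p$ odd) yields the stated formula.
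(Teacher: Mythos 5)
Your overall architecture (reduce everything to the factorization $L_p^+(\m,\omega^0)=(U_p-1)\cdot U$ and then specialize), your use of the Hecke-algebra input from \cite{Deo,Wake-JEMS,PG5}, and your treatment of the sum formula via the constant term of the $w_N=-1$ Eisenstein family all match the paper. But the mechanism you propose for the central factorization does not work and is not what the paper does. The Eisenstein constant term here is $\E^-(T_0)=\zeta_{p,k_0}\cdot(1-\langle N\rangle^{1/2})$; it contains no factor of $U_p-1$, so no ``constant-term map'' can output ``$(U_p-1)$ times a Kubota--Leopoldt contribution.'' (Also, $\zeta_{p,k_0}$ is a unit in $\Lambda$ directly because $p\nmid B_{k_0}/k_0$ makes its value at weight $k_0$ a unit; Ferrero--Washington, i.e.\ $\mu=0$, would only give that \emph{some} coefficient is a unit and cannot make a power series a unit.) What the constant term actually controls is the congruence module $\Tm/\Ann_{\Tm}(\Im)\cong\Lambda/(\E^-(T_0))$, equivalently $F_k(0)$ up to units --- and that is precisely the input to the sum formula $\sum_g\mu(g)=\mathrm{val}_p(N-1)+\mathrm{val}_p(k)$, not to the factorization.

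The factor $U_p-1$ enters by a different route: one evaluates the $\omega^0$-branch at the \emph{trivial cyclotomic character}. The explicit Mazur--Kitagawa symbol $\cL_p^+=\varprojlim_r\big(\sum_a U_p^{-r}\{\infty,a/p^r\}\otimes[a]\big)$ collapses under this evaluation to $(1-U_p^{-1})$ times the canonical generator $\zinf$ of the dualizing module, i.e.\ to a generator of $\Im$ times $\zinf$; this is the argument of \cite[Theorem 3.15]{BP2018} and uses no $L$-value input at all. Lemma \ref{lem: lam(c) generates} then upgrades this single value to the full factorization: writing $L_p^+(\m,\omega^0)=t\cdot L^0(\omega^0)$ with $t=U_p-1$ (Lemma \ref{lem: L when I principal}) and applying the evaluation forces $c(L^0(\omega^0))=1$, hence $L^0(\omega^0)\in(\Tmo\lb u\rb)^\times$ since that ring is local. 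Without this evaluation-at-the-trivial-character step your argument has no source for the factor $U_p-1$, so the proof as written has a genuine gap at its declared ``technical heart.'' Two smaller points: the modular normalization $L_p^+(\m)$ lives in $\Tm\lb\Zpx\rb$, not over $\Tmo$; and principality of $\Im$ in Theorem \ref{thm:principal} uses hypothesis (1) as well as (2).
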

Theorem \ref{thm:general} is stated, and proven, in a more precise form in Theorem \ref{thm:Mazur U_p-1 gens} below.

\subsection{Normalizations}
\label{sec:normalizations}

Like the results in \cite{BP2018}, the above theorem can be thought of as explaining  $\mu$-invariants through $p$-adic variation.  For any individual form, a positive $\mu$-invariant can almost be thought of as an error in normalization.  One can simply change the complex period defining the $p$-adic $L$-function by a power of $p$ to simply force a $\mu$-invariant to be 0.  However, in the family one now sees the $\mu$-invariants as arising from valuations of special values of $p$-adic analytic functions.  In the setting of \cite{BP2018}, in the rank 1 case, the relevant analytic function was the $p$-adic $\zeta$-function (which itself has $\mu$-invariant 0).  In this paper, the relevant function is simply $\langle N \rangle ^{1/2}-1$ where $\langle N \rangle$ is the element in $\Lambda$ which specializes to $N^k$ in weight $k$.  Note that $\langle N \rangle ^{1/2}-1$ also has $\mu$-invariant 0.

With that said, one can wonder why these two-variable $p$-adic $L$-functions are divisible by these analytic functions that depends only on $k$ and not on the cyclotomic variable $s$.  Indeed, the origin of this project was an attempt to reconcile the results of \cite{BP2018} with results of the second author which said that these $\mu$-invariants were identically 0 along the family!

The reconciliation of these results is that there are two natural normalizations of the $p$-adic $L$-function and one leads to positive $\mu$-invariants which blow up in the family and the other leads to $\mu$-invariants that are 0 everywhere.  We now explain.

In the construction, due to Mazur and Kitigawa, of the two-variable $p$-adic $L$-function, one considers a certain explicit class
$$
\cL^+_p(\m) \in X\lb\Zpx\rb := \varprojlim_n H^1(Y_1(Np^r),\Z_p)^{-,\ord}_{\m}\lb\Zpx\rb
$$
built out of modular symbols where the upper ``$\ord$" denotes the ordinary projector and $\m$ denotes the completion at the maximal ideal corresponding to our Hida family (the change in sign in the notation is due to Poincar\'e duality:\ we think of the negative-signed cohomology class $\cL^+_p(\m)$ as a \emph{functional} on modular symbols with positive sign \S \ref{sec:pL}).  
We refer to this class as the two-variable $p$-adic $L$-{\it symbol}.  

Typically, when constructing $p$-adic $L$-functions,  one extracts a function from a symbol by choosing a basis for the space of symbols.  In this case, that would involve choosing a basis for the space $X$ as a $\Tm$-module. However, the space $X$ is not necessarily cyclic as a $\Tm$-module; in fact, by Ohta's Eichler--Shimura isomorphism for Hida families \cite{ohta1999}, it is known that $X$ is a dualizing module for $\Tm$.  Hence $X$ is free of rank $1$ over $\Tm$ if and only if $\Tm$ is a Gorenstein ring.  In fact, if $\Tm$ is Gorenstein, then there is a canonical generator $\zinf \in X$ and there is a unique $L_p(\m) \in \Tm \lb \Zpx\rb$ such that
$$
\cL^+_p(\m) = L^+_p(\m) \cdot \{0, \infty\}.
$$
The element $L^+_p(\m)$ can be viewed as a two-variable $p$-adic $L$-function.  Indeed, the cyclotomic variable comes from the fact that $L^+_p(\m)$ lives in a group algebra over $\Zpx$ while the weight variable is parametrized by $\Spec(\Tm)$. If $f$ is a classical form in our Hida family and $\p_f$ is the corresponding height 1 prime ideal of $\Tm$, then the image of $L^+_p(\m)$ in $\Tm / \p_f\lb\Zpx\rb$ is the one-variable $p$-adic $L$-function of $f$.  

As discussed in \cite[Section 1.2]{BP2018}, the specializations of $L^+_p(\m)$ to Eisenstein forms in the family always vanish and this is the reason why the $\mu$-invariants blow up in the family.  We write $L_p^{+,\md}(\m)$ for the image of $L^+_p(\m)$ in $\Tmo\lb\Zpx\rb$ and we refer to $L_p^{+,\md}(\m)$ as the two-variable $p$-adic $L$-function with the {\it modular} normalization.  Here ``modular" is referring to the fact that this $L$-function first arose from a Hecke-algebra on the full space of modular forms.

On the other hand there is another natural normalization.  A closer examination of the definition of the $p$-adic $L$-symbol shows that it actually lives in the cohomology of the compact modular curve:
$$
\cL^+_p(\m) \in X^0\lb\Zpx\rb := \varprojlim_n H^1(X_1(Np^r),\Z_p)^{-,\ord}_{\m}\lb\Zpx\rb.
$$
If we instead assume that $\Tmo$, the cuspidal Hecke algebra localized at $\m$, is Gorenstein, then we again have $X^0$ is free of rank 1 over $\Tmo$; however, in this case there is no natural generator of this space.  Nonetheless, pick a generator $e$ of $X^0$, and write
$$
\cL^+_p(\m) = L_p^{+,\cusp}(\m) \cdot e
$$
for a unique $L_p^{+,\cusp}(\m) \in \Tmo\lb\Zpx\rb$.  We call this a two-variable $p$-adic $L$-function with the {\it cuspidal} normalization as it was defined directly from the cuspidal Hecke algebra.  It is well-defined up to a unit in $\Tmo$.

When both $\Tm$ and $\Tmo$ are Gorenstein, then both definitions $L_p^{+,\md}(\m)$ and $L_p^{+,\cusp}(\m) $ make sense and one can ask how they are related.  Following ideas of Ohta \cite{ohta2005}, we show that both $\Tm$ and $\Tmo$ are Gorenstein if and only if the Eisenstein ideal $\Im$ is principal. Moreover, we show that $L_p^{+,\cusp}(\m)$ divides $L_p^{+,\md}(\m)$ and that the ratio is a generator of the Eisenstein ideal. 
 Note that in Theorem \ref{thm:general}, $U_p-1$ generates the Eisenstein ideal and this is exactly the analytic factor that causes the $\mu$-invariants to grow in the family.  In that theorem, the modular normalization is used.  If we instead used the cuspidal normalization, we would lose that factor of $U_p-1$ and all of the $\mu$-invariants would be 0.  

\subsection{Further questions}
In this paper, we only consider that analytic side of Iwasawa theory, in that we only consider $p$-adic $L$-functions.  There is a parallel algebraic side,  involving Selmer groups, that appears to be closely related to our results.  For instance,  specializing to weight 2 in Theorem \ref{thm:55} gives that the analytic $\mu$-invariant of $X_0(11)$ is $1$; this corresponds to the famous fact that the algebraic $\mu$-invariant of $X_0(11)$ is $1$ \cite[Section 10]{mazur1972}.
This algebraic side is particularly interesting in view of conjectures of Greenberg \cite{greenberg2001} regarding algebraic $\mu$-invariants of elliptic curves.  There is also an algebraic analog of the normalizations discussed in Section \ref{sec:normalizations}. The Selmer group depends on a choice of $\Z_p$-lattice in the Galois representation and there are (at least) two choices of  
lattice that come from geometry:\ a cuspidal lattice, coming from $H^1(X_1(Np^r),\Z_p)$, and a modular lattice, coming from $H^1(Y_1(Np^r),\Z_p)$. We are currently working to prove the algebraic analog of Theorem~\ref{thm:general},  and to prove a kind of Iwasawa main conjecture for $\mu$-invariants:\ that the algebraic $\mu$-invariant for the cuspidal lattice matches the analytic $\mu$-invariant for the cuspidal normalization, and similiarly for the modular normalization.

We also restrict our attention to the case where the tame level $N$ is prime and where $p \nmid B_{k_0}$. This restriction on the level is natural in view of our goal to understand the context for the example of $X_0(11)$.  However,  this assumption has the drawback of limiting the types of examples we can explore:\ for instance, $X_0(11)$ is the only elliptic curve we know of for which Theorem \ref{thm:general} applies.  Many of the techniques developed in this paper are general and can be applied to other situations as soon as the relevant results on the structure of Hecke algebras are known. Some interesting situations to consider include:
\begin{itemize}
\item  $N$ is squarefree. (There are some results on the structure of Hecke algebras proven in \cite{PG5}.)
\item $N$ is the square of a prime. (This situation was considered in \cite{LW}.)
\item  $N$ is prime but $p \mid B_{k_0}$, so there are congruences of both ``$L$-value'' and ``Euler-factor'' type. 
\end{itemize}
In this greater generality,  it will usually not be true that the Eisenstein ideal is principal, so it is unlikely that one can hope for results as precise as Theorem \ref{thm:general}.  We make some conjectures (see Conjectures \ref{conj:mu conj} and \ref{conj:Mazur mu conj}) regarding what is happening with Iwasawa invariants in some cases where the Eisenstein ideal fails to be principal,  but where one of $\Tm$ and $\Tmo$ is Gorenstein.  Examples statisfying these conditions in our current setting are rare, so we were unable to test our conjectures numerically. However,  there are many such examples in the more relaxed situations above (for instance, \cite[Example 1.9.3]{PG5} is an example with small squarefree level and weight $2$ where $\Im$ is not principal, $\Tm$ is not Gorenstein, but $\Tmo$ is Gorenstein). We hope that these conjectures can serve as a starting point for investigating these more general situations.

\subsection{Structure of paper}
In Section \ref{sec:Axiomatics}, we begin with an axiomatic approach to describing Hecke algebras, Eisenstein ideals, and $p$-adic $L$-functions when there is a unique Eisenstein series parametrized by our Hecke algebra.  This axiomatic approach applies equally to the case where the congruences arise by $p$-divisibility of $L$-values or of Euler factors and studies the two possible normalizations discussed in \S \ref{sec:normalizations}.  In Section \ref{sec:pL}, we review the construction of the two-variable $p$-adic $L$-symbol $\cL^+_p(\m)$. In Section \ref{sec:primitive}, we consider the setting of \cite{BP2018}.  We reprove the main results of \cite{BP2018} as an illustration of the axiomatics of Section \ref{sec:Axiomatics} and also make conjectures about what happens when the Eisenstein ideal is not principal.  In Section \ref{sec:mazur}, we move to the case of trivial tame character and prime level.
This section contains our main results on analytic $\mu$-invariants.  
Finally, in Appendix \ref{appendix:Hida}, we verify that Hida theory works equally well for Hecke algebras in which $U_N$ has been replaced by $w_N$.

\vspace{0.2cm}

\noindent
{\it Acknowledgements}:\
We dedicate this paper to the memory of Jo\"el Bella\"iche. 
The impact of Jo\"el's mathematics on this paper,  from his pioneering work on pseudorepresentations to the paper \cite{BP2018}, will be clear to the reader.  Even the genesis of this paper goes back to a meeting between the authors and Jo\"el, about \cite{BP2018} and generalizations,  at the Glenn Stevens birthday conference in 2014.  At the same conference,  Jo\"el first introduced the second author to Carl Wang-Erickson,  which led to a fruitful collaboration that provided many of the results that make this paper possible,  especially  \cite{PG3,PG4,PG5}.  

We would like to thank Carl Wang-Erickson for many helpful discussions on the topic of this paper.

The first author thanks the Max Planck Institute for Mathematics in Bonn and the second author thanks the Institute for Advanced Study in Princeton for wonderful environments during their stays there. 
The first author was supported by NSF grants DMS-1702178 and DMS-2302285 and the second author by NSF grant DMS-1901867.

\section{Axiomatics}
\label{sec:Axiomatics}
In Section \ref{sec:primitive}, we consider the same situation as \cite{BP2018}, with a primitive tame character, and in Section \ref{sec:mazur}, we consider the non-primitive setting of Mazur's Eisenstein ideal. The two situations have a lot of commonalities, and to emphasize this,  in this section, we consider a purely abstract situation which covers both of these at once. 

The main point is that the standard construction of the two-variable $p$-adic $L$-function gives not a \emph{function} but an element of a two-variable modular symbols space. In both situations, the modular symbols space is known to be isomorphic to the \emph{dualizing module} of the cuspidal Hecke algebra.  If this dualizing module is isomorphic to the Hecke algebra ({\it i.e.}\ if the cuspidal Hecke algebra is Gorenstein), then the symbol gives a function.  The $p$-adic $L$-function can also be thought of as an element of the larger dualizing module of the full Hecke algebra, and if that Hecke algebra is Gorenstein, then the symbol again gives a function. If both Hecke algebras are Gorenstein, this gives two different ways to produce a function from the symbol, and the two functions are different by a normalizing factor.

Much of this discussion revolves around commutative algebra questions of whether one or the other, or both, or these Hecke algebras is Gorenstein. In this section, we discuss these commutative algebra issues in the abstract.

\subsection{Setup}
The first key property common to both of the situations we consider is that, though there are congruences between cuspidal families and an Eisenstein family, there are no congruences between different Eisenstein families.  We axiomatize this as follows. 

\begin{defn}
\label{def:single eisen}
A \emph{single-Eisenstein Hecke algebra setup} is the data of $(\sO,A,R,\E, r_0)$ where
\begin{itemize}
\item $\sO$ is a complete discrete valuation ring with uniformizer $\varpi$ and residue field $k$,
\item $A$ is a flat,  local $\sO$-algebra that is a complete commutative Noetherian local complete intersection ring with maximal ideal $\m_A$,
\item $R$ is a commutative local $A$-algebra,
\item $\E: R \to A$ is an $A$-algebra homomorphism, and
\item $r_0 \in R$ is an element the annihilates $\ker(\E)$
\end{itemize}
satisfying the conditions
\begin{itemize}
\item $R$ is a free $A$-module of finite rank $d$,
\item $R/\Ann_R(\ker(\E))$ is a free $A$-module of rank $d-1$,
\item $r_0$ generates $\Ann_R(\ker(\E))$ as an $A$-module, and
\item $\E(r_0) \in A$ is a not a zero divisor.
\end{itemize}
Given this setup, let $I=\ker(\E)$, let $R^0=R/\Ann_R(I)$, and let $I^0 \subset R^0$  be the image of $I$ in $R^0$.
\end{defn}

\begin{rem}
In the situations we consider,  the data $(\sO,A,R,\E, r_0)$ are as follows:
\begin{itemize}
\item $\sO$ is $\Z_p$ or the valuation ring of a finite extension of $\Q_p$,
\item $A$ is a ring of diamond operators,
\item $R$ is a localization of a Hecke algebra at an Eisenstein maximal ideal, 
\item $\E$ is the action of $R$ on an $A$-family of Eisenstein series, and
\item $r_0$ is Hecke operator with the property that, for a modular form $f$, the coefficient $a_1(r_0 f)$ of $q$ in the $q$-expansion of $r_0f$ equals the residue of $f$ at a cusp.
\end{itemize}
If $\E$ is the only family of Eisenstein series supported by $R$, then $I$ is the Eisenstein ideal and $R^0$ is maximal the quotient of $R$ that acts faithfully on cuspforms.
\end{rem}

Let $(\sO,A,R,\E,r_0)$ be a single-Eisenstein Hecke algebra setup. Note that the map $\E$ induces an isomorphism $R/I \cong A$, making $A$ into an $R$-algebra.  Moreover, the map $A \to \Ann_R(I)$ given by $1 \mapsto r_0$ is an isomorphism of $R$-modules, so that for all $r \in R$,
\begin{equation}
\label{eq:action on r_0}
r \cdot r_0 = \E(r) r_0.
\end{equation}
Note also that the action of $R$ on $I$ factors through $R^0$, so that if $r \in R^0$ and $y \in I$, then $ry$ is a well-defined element of $I$.

The second key property is that the $p$-adic $L$-function is given as an element of a module that is dual to the cuspidal Hecke algebra.

\begin{defn}
\label{def:Lsetup}
Let $(\sO,A,R,\E,r_0)$ be a single-Eisenstein Hecke algebra setup. Then an \emph{$L$-symbol setup} is the data of a triple $(X,\phi,x,\cL)$ where
\begin{itemize}
\item $X$ is a $R$-module,
\item $\phi: X \to A$ is an $A$-linear functional on $X$,  with kernel $X^0=\ker(\phi)$,
\item $x \in X$ is an element such that $\phi(x)=1$, and
\item $\cL \in \ker(\phi) \otimes_\sO  \sO \lb \Z_p^\times\rb$,
\end{itemize}
satisfying the condition that 
\begin{itemize}
\item there is an isomorphism $f: X \isoto \Hom_A(R,A)$ of $R$-modules sending $X^0$ isomorphically to $\Hom_A(R^0,A)$.
\end{itemize}
Note that, for all $g \in \Hom_A(R,A)$ and $r\in R$,  the element $(r-\E(r))g$ is in $\Hom_A(R^0,A)$ by \eqref{eq:action on r_0}.  This implies that, for all $y \in X$, the element $(r-\E(r))y$ is in $X^0$, so that $\phi$ must be a morphism of $R$-modules:
\[
\phi(ry)=\E(r)\phi(y).
\]
\end{defn}

Recall that a local ring is called \emph{Gorenstein} if it has a dualizing module that is free of rank $1$ (see \cite[Section 3]{BH1993} for a discussion of Gorenstein rings, and note that the authors use the term \emph{canonical module} for what we call \emph{dualizing module}).  For our purposes, the important facts to remember are that local complete intersection rings are Gorenstein \cite[Proposition 3.1.20, pg.~96]{BH1993} and that if $B \to B'$ is a finite flat local ring homomorphism and $M$ is a dualizing module for $B$, then $\Hom_B(B',M)$ is a dualizing module for $B'$ \cite[Theorem 3.3.7, pg.~112]{BH1993}.  In particular, in an $L$-symbol setup, $X$ is a dualizing module for $R$ and $X^0$ is a dualizing module for $R^0$.
\subsection{Algebra around Gorensteinness}
Let  $(\sO,A,R,\E,r_0)$ be a single-Eisenstein Hecke algebra setup.  The composite map
\[
R \xrightarrow{\E} A \to A/\E(r_0)A
\]
factors through $R^0$. Let $A^0=A/\E(r_0)A$ and let $\E^0:R^0 \to A^0$ denote the induced map.  Since the two quotient maps $R \to R^0 \xrightarrow{\E^0} A^0$ and $R \xrightarrow{\E} A \to A^0$ are equal, there is a map to the fiber product of rings:
\[
R \to R^0 \times_{A^0} A = \{(r,a) \in R^0 \times A \ | \ \E^0(r) \equiv a \mod{\E(r_0)} \}.
\]

\begin{lem}
\label{lem:fiber prod}
The map $R \to R^0 \times_{A^0} A$ is an isomorphism of $A$-algebras. In particular, the natural maps $R^0/I^0 \to A^0$ and $I \to I^0$ are isomorphisms of $R$-modules.
\end{lem}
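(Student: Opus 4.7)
The plan is to construct the comparison map $\psi \colon R \to R^0 \times_{A^0} A$ explicitly by $\psi(r) = (\bar{r}, \E(r))$, where $\bar{r}$ denotes the image of $r$ in $R^0 = R/\Ann_R(I)$. Well-definedness in the fiber product is automatic from the construction of $\E^0$: by design $\E^0(\bar{r})$ is the image of $\E(r)$ in $A^0 = A/\E(r_0)A$, so the compatibility $\E^0(\bar{r}) \equiv \E(r) \pmod{\E(r_0)}$ holds on the nose. It then suffices to prove $\psi$ is a bijection, and the ``in particular'' statements will follow as corollaries.

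For injectivity the two structural inputs are that $\Ann_R(I) = A \cdot r_0$ and that $\E(r_0)$ is a non-zero-divisor in $A$. If $\psi(r) = 0$ then $\bar{r} = 0$, so $r = a r_0$ for some $a \in A$, and then $\E(r) = a\E(r_0) = 0$ forces $a = 0$, hence $r = 0$. For surjectivity, given a compatible pair $(\bar{r}, a)$, I would lift $\bar{r}$ to some $r \in R$; the compatibility condition then gives $a - \E(r) = b\E(r_0)$ for some $b \in A$, and $r + br_0$ is the desired preimage, since $r_0$ vanishes in $R^0$ and $\E(r + br_0) = \E(r) + b\E(r_0) = a$.

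The two ``in particular'' statements follow at once. The natural map $I \to I^0$ has kernel $I \cap \Ann_R(I)$, which is $0$ by exactly the argument used for injectivity of $\psi$ (if $ar_0 \in I$ then $a\E(r_0) = 0$, so $a = 0$). Composing $\E$ with $A \twoheadrightarrow A^0$ annihilates both $I$ and $\Ann_R(I)$ (the latter because $\E(\Ann_R(I)) = \E(r_0)A$), and so it factors through a surjection $R^0/I^0 \twoheadrightarrow A^0$; its injectivity follows from the elementary identity $\ker(R \to A^0) = I + \Ann_R(I)$, which is immediate by lifting a relation $\E(r) = b\E(r_0)$ back to $r - br_0 \in I$.

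I do not anticipate a substantial obstacle here; the lemma is really an exercise in tracing how $R$ is reconstructed from $R^0$ and $A$ along the short exact sequence $0 \to \Ann_R(I) \to R \to R^0 \to 0$, with $\E(r_0) \in A$ playing the role of the gluing modulus. The only hypotheses that are doing real work are the identification $\Ann_R(I) \cong A$ via $r_0$ and the non-zero-divisor property of $\E(r_0)$, both of which are built into the definition of a single-Eisenstein Hecke algebra setup.
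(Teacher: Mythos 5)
Your proposal is correct and follows essentially the same route as the paper: the paper also verifies the fiber-product map directly, identifying the kernel with $\ker(Ar_0 \to A\E(r_0)) = 0$ and proving surjectivity by lifting $t^0$ to $t \in R$ and correcting by $a'r_0$, exactly as you do. The only cosmetic difference is that the paper first runs the five-lemma on the two short exact sequences to get $I \isoto \ker(\E^0)$ and deduces the ``in particular'' statements from that, whereas you derive them directly from $I \cap \Ann_R(I) = 0$ and $\ker(R \to A^0) = I + \Ann_R(I)$; the content is the same.
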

\begin{proof}
Consider the commutative diagram of $R$-modules with exact rows
\[
\xymatrix{
0 \ar[r] & Ar_0 \ar[r] \ar[d] & R \ar[r] \ar[d]^-\E & R^0 \ar[r] \ar[d]^-{\E^0} & 0 \\
0 \ar[r] & A\E(r_0) \ar[r] & A   \ar[r] & A^0 \ar[r] & 0.
}\]
The leftmost vertical arrow is an isomorphism: it is surjective by definition and injective since $\E(r_0)$ is assumed to be a non-zero-divisor.  By the five-lemma, the map $\ker(\E) \to \ker(\E^0)$ is an isomorphism.  This is enough to imply that the map $R \to R^0 \times_{A^0} A$ is an isomorphism.  Indeed, the kernel is easily seen to equal $\ker(Ar_0 \to A\E(r_0))$, which is zero.  To see it is surjective, let $(t^0,a) \in R^0 \times_{A^0} A$, and choose a lift $t \in R$ of $t$.  Since $\E(t) \equiv \E^0(t^0) \equiv a \mod{A\E(r_0)}$,  it follows that $\E(t)-a \in A\E(r_0)$, so $\E(t)-a=a'\E(r_0)$ for some $a'\in A$. Then $t -a'r_0$ maps to $(t^0,\E(t-a'r_0))=(t^0,a)$. 
\end{proof}

Now let $(X,\phi,x, \cL)$ be an $L$-symbols setup. The following gives criteria for $R$ to be Gorenstein.

\begin{lem}
\label{lem: T goren}
The following are equivalent:
\begin{enumerate}
\item
\label{part:Rgoren1}
 $R$ is Gorenstein,
\item 
\label{part:Rgoren2}
 $X$ is a free $R$-module of rank 1,
\item 
\label{part:Rgoren3}
 $X$ is cyclic as an $R$-module,
\item
\label{part:Rgoren4}
  $X$ is generated by $x$ as an $R$-module,
\item 
\label{part:Rgoren5}
 The map $I \to X^0$ given by $t \mapsto t \cdot x$ is an isomorphism of $R$-modules.
\end{enumerate}
\end{lem}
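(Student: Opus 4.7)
The plan is to establish the equivalence via the chain $(1) \iff (2) \iff (5)$ together with the cycle $(2) \Rightarrow (3) \Rightarrow (4) \Rightarrow (2)$, with $(2)$ as the hub. The key input is that $X$ is a dualizing module for $R$, which follows from the isomorphism $X \cong \Hom_A(R,A)$ stipulated in the $L$-symbol setup: since $A$ is a local complete intersection, $A$ is Gorenstein and hence a dualizing module over itself, and since $A \to R$ is finite, flat, and local, the cited transfer result makes $\Hom_A(R,A)$ a dualizing module for $R$. With this in hand, the equivalence $(1) \iff (2)$ is just the defining characterization: a local ring is Gorenstein iff its dualizing module is free of rank $1$.

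Next I would run the cycle $(2) \Rightarrow (3) \Rightarrow (4) \Rightarrow (2)$. The first implication is immediate. For $(3) \Rightarrow (4)$, suppose $X = Ry$ for some generator $y$ and write $x = r y$ with $r \in R$. Applying $\phi$ and using $\phi(x)=1$ together with the identity $\phi(r y) = \E(r)\phi(y)$ forces $\E(r)\in A^\times$. Since $R$ is local and $I = \ker(\E) \subseteq \m_R$ (as $R/I \cong A$ is local), this implies $r \notin \m_R$, so $r$ is a unit and $x$ itself generates $X$. For $(4) \Rightarrow (2)$, the map $R \twoheadrightarrow X$, $r \mapsto rx$, is an $A$-linear surjection between free $A$-modules of the same finite rank $d$ (using $X \cong \Hom_A(R,A)$), and any such surjection is an isomorphism; this makes $X$ free of rank $1$ over $R$.

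It remains to close $(2) \iff (5)$. Assuming $(2)$, we know $X = Rx$ from the previous paragraph, so the $R$-linear map $I \to X^0$, $t \mapsto tx$, is well-defined (since $\phi(tx)=\E(t)=0$), surjective (any $z\in X^0$ is $sx$ with $\E(s) = \phi(z)=0$), and injective (since $X \cong R$). Conversely, assuming $(5)$, any $y \in X$ satisfies $\phi(y - \phi(y)x) = 0$, so $y - \phi(y)x \in X^0 = Ix$ by hypothesis, and hence $y \in Rx$. This proves $(4)$, which we have already shown implies $(2)$.

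I expect the main obstacle to be the step $(3) \Rightarrow (4)$: upgrading from an arbitrary cyclic generator to the specific generator $x$ furnished by the $L$-symbol setup. Every other implication is either formal or a clean $A$-module rank comparison, whereas this one crucially uses both the locality of $R$ and the normalization $\phi(x)=1$ to force the change-of-generator scalar to be a unit.
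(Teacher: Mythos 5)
Your proof is correct and follows essentially the same route as the paper: $(1)\Leftrightarrow(2)$ from the dualizing-module characterization of Gorenstein, $(3)\Rightarrow(4)$ via the unit trick using $\phi(x)=1$ and locality of $R$, and the remaining implications by comparing the sequences $0\to I\to R\to A\to 0$ and $0\to X^0\to X\to A\to 0$. The only cosmetic differences are that the paper deduces $(4)\Rightarrow(2)$ from faithfulness of $X$ rather than your $A$-rank count, and handles the equivalence with $(5)$ by the five-lemma rather than your direct element argument; both variants are valid.
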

\begin{proof}
The equivalence of \eqref{part:Rgoren1} and \eqref{part:Rgoren2} is the definition of Gorenstein, and clearly \eqref{part:Rgoren2} implies \eqref{part:Rgoren3}.  Assume \eqref{part:Rgoren3}, and let $y \in X$ be a generator of $X$ as an $R$-module.  Let $r \in R$ be such that $ry=x$.  Then 
\[
1=\phi(x)=\phi(ry)=\E(r)\phi(y).
\]
This implies $\E(r) \in A^\times$, so, since $R$ is local, $r \in R^\times$.  Since $x=ry$, this implies \eqref{part:Rgoren4}. 
Since $X$ is a faithful $R$-module, \eqref{part:Rgoren4} implies \eqref{part:Rgoren2}.
Lastly, the equivalence of \eqref{part:Rgoren4} and \eqref{part:Rgoren5} follows by applying the five-lemma to the commutative diagram
\begin{equation}
\label{eq:R and X diagram}
\xymatrix{
0 \ar[r] & I \ar[r] \ar[d] & R \ar[r]^\E \ar[d]^{r \mapsto r \cdot x} & A\ar[r] \ar@{=}[d] & 0 \\
0 \ar[r] & X^0 \ar[r] & X \ar[r]^\phi & A \ar[r] & 0.}
\end{equation}
\end{proof}

It follows from the lemma that if $R$ is Gorenstein, then $I$ is a dualizing module for $R^0$.  In particular, if $R$ and $R^0$ are both Gorenstein, then $I$ is principal.  We will see that the converse is true as well. First, we require the following lemma on the structure of $R$ when $I$ is principal.

\begin{lem}
\label{lem:structure of T}
Assume that $I$ is a principal ideal with a generator $t \in I$. Let $F(X) \in A[X]$ be the characteristic polynomial of the $A$-linear endomorphism
\[
R^0 \to R^0, \ r \mapsto tr.
\]
Then there are $A$-algebra isomorphisms
\[
A[X]/(F(X)) \isoto R^0, \ A[X]/(XF(X)) \isoto R
\]
given by $X \mapsto t$. Moreover, $F(X)$ is a distinguished polynomial with $F(0)A=\E(r_0)A$.
\end{lem}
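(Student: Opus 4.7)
The plan is to leverage Cayley--Hamilton together with the fiber-product description $R \cong R^0 \times_{A^0} A$ from Lemma~\ref{lem:fiber prod}. First I will establish the isomorphism $A[X]/(F(X)) \isoto R^0$ sending $X \mapsto t$, then use it to identify the ideals $F(0)A$ and $\E(r_0)A$, and finally upgrade to $A[X]/(XF(X)) \isoto R$. The distinguishedness of $F$ will then drop out by reducing modulo the maximal ideal of $A$.

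For the structure of $R^0$: the isomorphism $R^0/I^0 \cong A^0$ from Lemma~\ref{lem:fiber prod}, combined with $I^0 = tR^0$, gives $R^0 = A + tR^0$. Reducing modulo $\m_A$ yields $\overline{R^0} = k + \bar t \overline{R^0}$, and since $\overline{R^0}$ is a finite-dimensional local $k$-algebra (being a quotient of the local ring $R$), its maximal ideal is nilpotent, so $\bar t$ is nilpotent and iteration gives $\overline{R^0} = k[\bar t]$. Nakayama's lemma applied to the finitely generated $A$-module $R^0/A[t]$ then yields $R^0 = A[t]$, so the $A$-algebra map $A[X] \to R^0$ sending $X \mapsto t$ is surjective. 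Cayley--Hamilton applied to multiplication by $t$ on the free $A$-module $R^0$ gives $F(t) = 0$, so this map factors through a surjection $A[X]/(F(X)) \twoheadrightarrow R^0$ between free $A$-modules of rank $d-1$; any such surjection between finite free modules of the same rank over a commutative ring is an isomorphism.

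For the structure of $R$: the previous step gives $R^0/I^0 \cong A[X]/(F(X), X) = A/(F(0))$ as $A$-algebras, and the $A$-algebra isomorphism $R^0/I^0 \cong A/\E(r_0)A$ from Lemma~\ref{lem:fiber prod} then forces $F(0)A = \E(r_0)A$. To see $R \cong A[X]/(XF(X))$, I will compute the characteristic polynomial of multiplication by $t$ on $R$: this operator preserves the sub-$A$-module $Ar_0$ and acts by zero on it (using $t \cdot r_0 = 0$), while acting on the free quotient $R^0$ with characteristic polynomial $F(X)$, so its characteristic polynomial on $R$ is $XF(X)$. Cayley--Hamilton then yields $tF(t) = 0$ in $R$, and the same Nakayama argument applied to $R/A[t]$ (using $R = A + tR$ coming from $R/I = A$) shows $R = A[t]$; the resulting surjection $A[X]/(XF(X)) \twoheadrightarrow R$ between free $A$-modules of rank $d$ is an isomorphism by the same rank argument. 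Finally, $F$ is monic by construction, and its reduction $\bar F$ is the characteristic polynomial of the nilpotent operator $\bar t$ on $\overline{R^0}$, hence equal to $X^{d-1}$, establishing distinguishedness.

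The main obstacle I expect is the bookkeeping around the Nakayama steps: $A[t]$ is not typically closed under the full $R^0$-action, so I cannot work purely with $R^0$-module quotients. The key maneuver is to reduce modulo $\m_A$, exploit that $\overline{R^0}$ is a local Artinian $k$-algebra with $\bar t$ in its maximal ideal, use nilpotence to deduce $\overline{R^0} = k[\bar t]$, and then invoke Nakayama for the finitely generated $A$-module $R^0/A[t]$ (and analogously for $R$). Everything else is a routine combination of Cayley--Hamilton and Lemma~\ref{lem:fiber prod}.
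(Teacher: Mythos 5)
Your proof is correct. For the isomorphism $A[X]/(F(X)) \isoto R^0$, your argument (show $R^0 = A[t]$ by reducing mod $\m_A$, exploiting nilpotence of $\bar t$, and invoking Nakayama; then Cayley--Hamilton plus a rank count) is essentially the same Nakayama--Cayley--Hamilton strategy as the paper, just packaged slightly differently: the paper proves $A[X] \to R$ surjective directly by the field-case iteration $r = \E(r) + r_1 t$ using $I^d = 0$, whereas you work with $R^0$ first. Where you genuinely diverge is in the second half. The paper dispenses with the remaining claims (the isomorphism for $R$ and the identity $F(0)A = \E(r_0)A$) with a one-line appeal to the fiber-product description $R \cong R^0 \times_{A^0} A$ of Lemma~\ref{lem:fiber prod}; filling that in would require identifying $A[X]/(XF(X))$ with $A[X]/(F) \times_{A/F(0)A} A$, which in turn uses that $F(0)$ is a non-zero-divisor. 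You instead observe that multiplication by $t$ preserves $Ar_0$ and kills it (since $t \cdot r_0 = \E(t) r_0 = 0$ by~\eqref{eq:action on r_0}), so its characteristic polynomial on the free module $R$ is $X F(X)$ by block-triangularity with respect to $0 \to Ar_0 \to R \to R^0 \to 0$; you then rerun the Cayley--Hamilton and rank argument for $R$, and you extract $F(0)A = \E(r_0)A$ from comparing $R^0/I^0 \cong A/(F(0))$ with the $A^0$ identification of Lemma~\ref{lem:fiber prod}. This is a self-contained alternative to the fiber-product route that avoids the implicit CRT-type computation, at the cost of repeating the Nakayama argument once more for $R$. Your direct computation of $\bar F = X^{d-1}$ also makes the distinguishedness argument a bit more explicit than the paper's bare ``since $R^0$ is local.'' Both approaches are sound.
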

\begin{proof}
Let $\phi$ denote the $A$-algebra homomorphism
\[
A[X] \to R, \ X \mapsto t.
\]
First note that $\phi$ is surjective. To see this, it suffices, by Nakayama's lemma to prove that $\phi \otimes_A A/\m_A$ is surjective, so we may assume that $A$ is a field. In that case, $R$ is a $d$-dimensional algebra over the field $A$ with maximal ideal $I$, so $I^d=0$. Fix $r \in R$ and note that $r-\E(r) \in I$, so there is $r_1 \in R$ such that $r=\E(r)+r_1t$. Let $r_0=R$ and inductively choose $r_i \in R$ such that
\[
r_i=\E(r_i)+r_{i+1}t.
\]
Then
\[
r = \E(r_0)+\E(r_1)t+\dots+\E(r_{d-1})t^{d-1},
\]
so $r$ is in the image of $\phi$.

Composing $\phi$ with the surjection $R \onto R^0$ yields a surjective map $\phi^0:A[X] \to R^0$. Then $\phi^0$ factors through $A[ X ] /(F(X))$ by the Cayley-Hamilton Theorem, and, since $F(X)$ is monic, $A[ X ]/(F(X))$ is a free $A$-module of rank equal to the degree of $F(X)$. Since $R^0$ is a free $A$-module of rank equal to $\deg(F)$, this implies that $\phi^0$ is an isomorphism. Since $R^0$ is local, it follows that $F(X)$ is distinguished. The remaining parts follow from the isomorphism $R \cong R^0 \times_{A^0} A$ of Lemma \ref{lem:fiber prod}.
\end{proof}

\begin{exmp}
\label{exmp: rank 1} 
Suppose that $R^0=A$. Then Lemma \ref{lem:fiber prod} implies that $I^0=\E(r_0)A$ and that $I$ is generated by $r_0-\E(r_0)$. Then there is an isomorphism 
\[
A[X]/(X^2-\E(r_0) X) \isoto R
\]
given by $X \mapsto r_0-\E(r_0)$.
\end{exmp}

\begin{lem}
\label{lem: both goren}
The following are equivalent:
\begin{enumerate}
\item
\label{part:both1}
 $I^0$ is a principal ideal.
\item
\label{part:both2}
  $I$ is a principal ideal.
\item 
\label{part:both3}
 $I$ is a free $R^0$-module of rank $1$.
\item
\label{part:both4}
  Both $R$ and $R^0$ are local complete intersection rings.
\item 
\label{part:both5}
 Both $R$ and $R^0$ are Gorenstein.
\item 
\label{part:both6}
 There is $t \in I$ such that $t \cdot x$ is a generator of $X^0$ as a $R^0$-module.
\end{enumerate}
Furthermore, if all of these statements are true, then \eqref{part:both6} is true for all generators $t$ of $I$ and every such $t$ is a generator of $I$.
\end{lem}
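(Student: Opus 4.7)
The plan is to close the implication cycle $(1) \Leftrightarrow (2) \Leftrightarrow (3) \Rightarrow (4) \Rightarrow (5) \Rightarrow (3)$, then handle $(2) \Leftrightarrow (6)$ in tandem with the ``furthermore'' statement. The tools are Lemma \ref{lem:fiber prod}, Lemma \ref{lem:structure of T}, and Lemma \ref{lem: T goren}, together with the standard facts that local complete intersection rings are Gorenstein and that a quotient of a local complete intersection by a single non-zero-divisor remains a local complete intersection.

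First, $(1) \Leftrightarrow (2)$ is immediate from the $R$-module isomorphism $I \xrightarrow{\sim} I^0$ supplied by Lemma \ref{lem:fiber prod}, since generators correspond. The implication $(3) \Rightarrow (2)$ is trivial; for $(2) \Rightarrow (3)$, if $I = Rt$, then $I = R^0 \cdot t$ (the $R$-action factors through $R^0$), and the kernel of $r \mapsto rt$ is contained in $\Ann_{R^0}(I) = 0$. Given (2), Lemma \ref{lem:structure of T} identifies $R^0 \cong A[X]/(F(X))$ and $R \cong A[X]/(XF(X))$ with $F$ distinguished; since $F(X)$ and $XF(X)$ are monic, hence non-zero-divisors in $A[X]$, and since $A$ is a complete intersection, both quotients are local complete intersections, giving (4). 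The implication $(4) \Rightarrow (5)$ is standard. For $(5) \Rightarrow (3)$, Gorensteinness of $R$ combined with Lemma \ref{lem: T goren} produces an $R$-module isomorphism $I \xrightarrow{\sim} X^0$ given by $t \mapsto tx$; Gorensteinness of $R^0$ forces its dualizing module $X^0$ to be free of rank one over $R^0$, and transporting this structure through the isomorphism yields the same for $I$.

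For $(2) \Leftrightarrow (6)$: assuming (2), the cycle above supplies (5), so Lemma \ref{lem: T goren} yields the isomorphism $I \xrightarrow{\sim} X^0$ via $t \mapsto tx$; hence every generator $t$ of $I$ produces an $R^0$-module generator $tx$ of $X^0$, giving (6) and the first half of the ``furthermore''. Conversely, if $tx$ generates $X^0$ over $R^0$ for some $t \in I$, then $X^0 \subseteq Rx$; combining with $\phi(x) = 1$ and the short exact sequence $0 \to X^0 \to X \to A \to 0$ gives $X = Rx + X^0 = Rx$. Hence $X$ is cyclic over $R$, so by Lemma \ref{lem: T goren} the ring $R$ is Gorenstein and the map $I \xrightarrow{\sim} X^0$, $t' \mapsto t'x$, is an isomorphism; pulling back the generator $tx$ through this isomorphism identifies $t$ as a generator of $I$, which simultaneously delivers (2) and the second half of the ``furthermore''. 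The main subtlety lies in $(5) \Rightarrow (3)$, where both Gorensteinness hypotheses must be used in concert: the Gorensteinness of $R$ to identify the $R^0$-module $X^0$ with $I$, and the Gorensteinness of $R^0$ to conclude that this dualizing module is free of rank one; all other steps are bookkeeping once the three structural lemmas are in hand.
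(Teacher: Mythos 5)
Your proof is correct and uses the same ingredients as the paper's (Lemmas \ref{lem:fiber prod}, \ref{lem:structure of T}, and \ref{lem: T goren}), differing only in minor reorganization: the paper closes the loop via $(5)\Rightarrow(6)\Rightarrow(2)$ using the snake lemma on diagram \eqref{eq:R and X diagram}, whereas you go $(5)\Rightarrow(3)$ directly and treat $(6)$ separately, and for $(2)\Rightarrow(4)$ the paper reduces mod $\m_A$ and invokes flatness while you quotient $A[X]$ by the non-zero-divisors $F$ and $XF$. Both variants are valid and your handling of the ``furthermore'' clause is, if anything, more explicit than the paper's.
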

\begin{proof}
By Lemma \ref{lem:fiber prod}, the map $I \to I^0$ is an isomorphism, so \eqref{part:both1} implies \eqref{part:both2}. Since $I$ is a faithful $R^0$-module,  \eqref{part:both2} implies \eqref{part:both3}, and clearly \eqref{part:both3} implies \eqref{part:both1}.

Now assume \eqref{part:both2}.  By Lemma \ref{lem:structure of T},  there are isomorphisms
\[
R^0/\m_AR^0 \cong k[X]/(X^{d-1}) , \ R/\m_A R\cong k[X]/(X^{d}),
\]
so $R^0/\m_AR^0$ and $R/\m_AR$ are visibly local complete intersection rings.  Since $A$ is a local complete intersection and $A \to R^0$ and $A\to R$ are flat, this implies \eqref{part:both4} (see \cite[\href{https://stacks.math.columbia.edu/tag/09Q7}{Tag 09Q7}]{stacks-project}).  Moreover, \eqref{part:both4} implies \eqref{part:both5} by general algebra (see \cite[\href{https://stacks.math.columbia.edu/tag/0DW6}{Tag 0DW6}]{stacks-project}).

Now assume \eqref{part:both5}.  Since $R^0$ is Gorenstein, there is a generator $x^0 \in X^0$ of $X^0$ as an $R^0$-module.  By Lemma \ref{lem: T goren} (part \eqref{part:Rgoren5}) implies that $x^0$ is of the form $x^0=tx$ for some $t \in I$, proving \eqref{part:both6}.

Now assume \eqref{part:both6}. Then the leftmost vertical map in \eqref{eq:R and X diagram} is surjective. By the snake lemma, the center vertical map is also surjective, and, since $X$ is a faithful $R$-module, this implies that the center vertical map (and hence all the vertical maps by the 5 lemma) in \eqref{eq:R and X diagram} are isomorphisms. By \eqref{part:both6}, the composite map
\[
R^0 \xrightarrow{r \mapsto rt} I \xrightarrow{a \mapsto ax} X^0
\]
is surjective. Since the second map is an isomorphism, this implies that $t$ generates $I$, proving \eqref{part:both2}.
\end{proof}

\subsection{Algebra around $L$-functions} 
Let $(\sO,A,R,\E,r_0)$ be a single-Eisenstein Hecke algebra setup and let $(X,\phi,x,\cL)$ be an $L$-symbol setup for it. In this section,  we define different elements of $R\lb \Z_p^\times \rb$,  under different Gorenstein hypotheses, which we think of as normalizations of the $L$-function (as in Section \ref{sec:normalizations}) associated to the $L$-symbol setup. When both $R$ and $R^0$ are Gorenstein, then there are two distinct normalizations and we compare them.

\begin{lem}
\label{lem: L when T goren}
Suppose that $R$ is Gorenstein. Then there is a unique $\al \in R \lb \Zpx \rb$ such that $\aL = \al \cdot x$. Moreover, $\al$ is in the subset $I \lb \Zpx \rb \subset R \lb \Zpx \rb$.
\end{lem}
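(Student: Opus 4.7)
The plan is to deduce the result directly from Lemma \ref{lem: T goren}. Since $R$ is Gorenstein, parts \eqref{part:Rgoren2} and \eqref{part:Rgoren4} of that lemma imply that $X$ is free of rank one as an $R$-module with generator $x$; equivalently, the $R$-linear map
\[
\psi \colon R \isoto X, \qquad r \mapsto r\cdot x
\]
is an isomorphism. Moreover, the commutative diagram \eqref{eq:R and X diagram} from the proof of Lemma \ref{lem: T goren} (and part \eqref{part:Rgoren5}) shows that $\psi$ restricts to an isomorphism $I \isoto X^0$.

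Next I would tensor this pair of isomorphisms with $\sO\lb \Zpx\rb$ over $\sO$. Since $\sO\lb \Zpx\rb$ is flat over $\sO$ (for instance, writing $\Zpx \cong \mu_{p-1} \times (1+p\Z_p)$ one identifies $\sO\lb \Zpx\rb$ with $\sO[\mu_{p-1}]\lb T\rb$, which is free as an $\sO$-module), the short exact sequence $0 \to I \to R \to A \to 0$ remains exact after tensoring, producing a commutative diagram
\[
\xymatrix{
0 \ar[r] & I\lb \Zpx\rb \ar[r] \ar[d]^-\cong & R\lb \Zpx\rb \ar[r] \ar[d]^-\cong & A\otimes_\sO \sO\lb \Zpx\rb \ar[r] \ar@{=}[d] & 0 \\
0 \ar[r] & X^0 \otimes_\sO \sO\lb \Zpx\rb \ar[r] & X \otimes_\sO \sO\lb \Zpx\rb \ar[r] & A\otimes_\sO \sO\lb \Zpx\rb \ar[r] & 0.
}
\]
By hypothesis, $\cL \in X^0 \otimes_\sO \sO\lb \Zpx\rb$, so it corresponds under the leftmost vertical isomorphism to a unique $\al \in I\lb \Zpx\rb$, and by construction $\al \cdot x = \cL$. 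Uniqueness of $\al$ even inside the larger module $R\lb \Zpx\rb$ is immediate from injectivity of the middle vertical map.

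There is no real obstacle: the lemma is essentially a repackaging of Lemma \ref{lem: T goren} after extension of scalars to the Iwasawa algebra. The only point worth checking with care is the flatness of $\sO\lb \Zpx\rb$ over $\sO$ needed to preserve exactness of $0 \to I \to R \to A \to 0$, and this is routine.
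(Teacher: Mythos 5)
Your proposal is correct and follows essentially the same route as the paper: invoke Lemma \ref{lem: T goren} to get the isomorphism $r \mapsto rx$ from $R$ onto $X$ restricting to $I \isoto X^0$, extend scalars to $\sO\lb \Zpx\rb$, and take the preimage of $\cL$. The extra care you take with flatness of $\sO\lb \Zpx\rb$ over $\sO$ is a reasonable elaboration of a step the paper leaves implicit.
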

\begin{proof}
By Lemma \ref{lem: T goren}, the map
\[
R \lb \Zpx \rb \xrightarrow{r \mapsto rx} X \otimes_{\sO}  \sO \lb \Zpx \rb 
\]
is an isomorphism and it induces an isomorphism $I \lb \Zpx \rb \to X^0\otimes_{\sO}  \sO \lb \Zpx \rb$.  Then $L$ is the preimage of $\cL$ under the second isomorphism.
\end{proof}

\begin{defn}
If $R$ is Gorenstein, the element $\al \in R \lb \Zpx \rb$ of Lemma \ref{lem: L when T goren} is called the {\it modular normalization of the $p$-adic $L$-function}. 
\end{defn}

\begin{lem}
\label{lem: L when T^0 goren}
Suppose that $R^0$ is Gorenstein,  so that $X^0$ is a free  $R^0$-module of rank $1$.  Then, for each generator $e$ of $X^0$, there is a unique  $\alcv{e} \in R^0 \lb \Zpx \rb$ such that $\aL = \alcv{e} \cdot e$. Moreover, the class of $\alcv{e}$ in the quotient multiplicitive monoid $R^0\lb \Zpx \rb / (R^0)^\times$ is independent of the choice of generator $e$.
\end{lem}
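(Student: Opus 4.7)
The plan is to leverage the isomorphism $f: X \isoto \Hom_A(R,A)$ from Definition \ref{def:Lsetup} to identify $X^0$ with the dualizing module of $R^0$, use the Gorenstein hypothesis to make this dualizing module free of rank one, and then transport $\aL$ across the resulting identification $R^0 \isoto X^0$. As remarked after Definition \ref{def:Lsetup}, the restriction of $f$ gives $X^0 \isoto \Hom_A(R^0,A)$; since $A$ is a local complete intersection (in particular Gorenstein) and $R^0$ is finite free over $A$ (of rank $d-1$), the cited fact from \cite{BH1993} implies that $\Hom_A(R^0,A)$, and hence $X^0$, is a dualizing module for $R^0$. By definition, $R^0$ being Gorenstein is precisely the statement that its dualizing module is free of rank one, so each choice of generator $e$ of $X^0$ yields an $R^0$-linear isomorphism $R^0 \isoto X^0$ sending $r \mapsto r\cdot e$.

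Next, tensoring this isomorphism with $\sO\lb\Zpx\rb$ over $\sO$ and using the identification $R^0\lb\Zpx\rb = R^0 \otimes_\sO \sO\lb\Zpx\rb$ (coming from the decomposition $\Zpx \cong \mu_{p-1}\times(1+p\Z_p)$), we obtain an isomorphism
\[
R^0\lb\Zpx\rb \isoto X^0 \otimes_\sO \sO\lb\Zpx\rb, \quad L \mapsto L \cdot e.
\]
Defining $\alcv{e}$ to be the preimage of $\aL$ produces the unique element of $R^0\lb\Zpx\rb$ satisfying $\aL = \alcv{e}\cdot e$. This is the exact analogue of Lemma \ref{lem: L when T goren}, with $(R,X,x)$ replaced by $(R^0,X^0,e)$.

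For the independence claim, suppose $e'$ is another generator of $X^0$ as an $R^0$-module. Since $X^0$ is free of rank one, $e' = u e$ for some $u \in (R^0)^\times$. Then
\[
\aL = \alcv{e'}\cdot e' = (\alcv{e'} u) \cdot e,
\]
and uniqueness of $\alcv{e}$ forces $\alcv{e'} u = \alcv{e}$, i.e.\ $\alcv{e'} = u^{-1}\alcv{e}$. Hence the class of $\alcv{e}$ in $R^0\lb\Zpx\rb/(R^0)^\times$ does not depend on the choice of $e$. No genuine obstacle arises in this argument---it is a direct unwinding of the $L$-symbol axioms and the definition of a Gorenstein local ring, exactly parallel in structure to the proof of Lemma \ref{lem: L when T goren}; the only point meriting a line of justification is the identification of completed group algebras $R^0 \otimes_\sO \sO\lb\Zpx\rb = R^0\lb\Zpx\rb$.
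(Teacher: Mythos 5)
Your proof is correct and follows essentially the same route as the paper: the paper likewise observes that $X^0 \otimes_\sO \sO\lb \Zpx \rb$ is free of rank one over $R^0\lb \Zpx \rb$ with generator $e$ (which is exactly your tensored isomorphism $L \mapsto L\cdot e$), and then derives the independence statement by comparing two generators $e' = Ue$ with $U \in (R^0)^\times$ and invoking uniqueness. The only difference is that you spell out the dualizing-module justification for $X^0$ being free, which the lemma's hypothesis already packages into its ``so that'' clause.
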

\begin{proof}
The first part is clear since $X^0 \otimes_\sO \sO\lb \Zpx \rb$ is a free $R^0 \lb \Zpx \rb$-module with generator $e$. If $f$ is another generator for $X^0$, then $f=Ue$ for a unique $U \in (R^0)^\times$,  and
\[
\alcv{e} \cdot e = \aL = \alcv{f} \cdot f =  \alcv{f} \cdot Ue 
\]
so $\alcv{e}=\alcv{f} \cdot U$ and the second part follows.
\end{proof}
\begin{defn}
If $R^0$ is Gorenstein,  let $\alc$ denote the class of $\alcv{e}$ in $R^0\lb \Zpx \rb / (R^0)^\times$ for some choice of $e$ (Lemma \ref{lem: L when T^0 goren} says that $\alc$ is independent of the choice). The class $\alc$ is called the \emph{cuspidal normalization of the $p$-adic $L$-function}.  
\end{defn}

If both $R$ and $R^0$ are Gorenstein, then there are two different normalizations $L$ and $L^0$.  By Lemma \ref{lem: both goren},  both $R$ and $R^0$ are Gorenstein if and only if $I$ is principal; the following lemma essentially says that the two normalizations $L$ and $L^0$ differ by a generator of $I$.

\begin{lem}
\label{lem: L when I principal}
Suppose that $I$ is principal.
\begin{enumerate}
\item There is a unique element $\al \in I \lb \Zpx \rb$ such that $\aL = \al \cdot x$.
\item For each generator $t \in I$, the element $t x \in X^0$ is a generator for $X^0$ as a $R^0$-module, and there is a unique element $\alcv{tx} \in R^0 \lb \Zpx \rb$ such that $\aL = \alcv{tx} \cdot tx$.
\item \label{part:rel} For each generator $t \in I$,  there is an equality  $\al = \alcv{tx} \cdot t$ of elements of~$I\lb \Zpx \rb$.
\end{enumerate}
\end{lem}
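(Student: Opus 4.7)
The plan is to assemble the three parts from the hypotheses and the preceding lemmas; very little new work is needed, as the main point of the statement is to package the compatibility between the two normalizations. Since $I$ is assumed principal, Lemma \ref{lem: both goren} immediately tells us that both $R$ and $R^0$ are Gorenstein, which is precisely the setup in which Lemma \ref{lem: L when T goren} and Lemma \ref{lem: L when T^0 goren} apply.

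For part (1), I would simply invoke Lemma \ref{lem: L when T goren}: since $R$ is Gorenstein, that lemma produces a unique $L \in R \lb \Zpx \rb$ with $\cL = L \cdot x$, and moreover the lemma already refines this to $L \in I \lb \Zpx \rb$.

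For part (2), the first assertion—that $t \cdot x$ generates $X^0$ as an $R^0$-module for \emph{every} generator $t \in I$—is exactly the ``furthermore'' clause of Lemma \ref{lem: both goren}, which says that since $I$ is principal, condition \eqref{part:both6} holds for all generators $t$. Once $tx$ is known to generate $X^0$, existence and uniqueness of $\alcv{tx} \in R^0 \lb \Zpx \rb$ with $\cL = \alcv{tx} \cdot tx$ follow directly from Lemma \ref{lem: L when T^0 goren} applied with $e = tx$.

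For part (3), I would compare the two expressions for $\cL$ that parts (1) and (2) provide:
\[
L \cdot x \;=\; \cL \;=\; \alcv{tx} \cdot tx \;=\; (\alcv{tx} \cdot t) \cdot x.
\]
Both $L$ and $\alcv{tx} \cdot t$ lie in $I \lb \Zpx \rb$ (the former by Lemma \ref{lem: L when T goren}, the latter because $t \in I$ and $I$ is an ideal), and by Lemma \ref{lem: T goren} the map $R \lb \Zpx \rb \to X \otimes_\sO \sO \lb \Zpx \rb$ sending $r \mapsto r \cdot x$ is an isomorphism, in particular injective. Cancelling $x$ therefore gives the desired equality $L = \alcv{tx} \cdot t$ in $I \lb \Zpx \rb$. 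I do not anticipate any real obstacle here; the hard commutative-algebra input has been absorbed into Lemma \ref{lem: both goren} and Lemma \ref{lem: T goren}, and the present lemma is essentially a bookkeeping assertion pinning down how a generator of $I$ mediates between the cuspidal and modular normalizations.
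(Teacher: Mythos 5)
Your proposal is correct and follows essentially the same route as the paper: parts (1) and (2) are obtained exactly as in the paper by combining Lemma \ref{lem: both goren} with Lemmas \ref{lem: L when T goren} and \ref{lem: L when T^0 goren}. For part (3) the paper instead factors $L$ through the isomorphism $R^0 \isoto I$, $1 \mapsto t$ (from Lemmas \ref{lem:fiber prod} and \ref{lem: both goren}) and matches the result with $\alcv{tx}$ via the uniqueness in (2), whereas you cancel $x$ directly using the freeness of $X$ over $R$; both arguments are valid and of the same length, so this is only a cosmetic difference.
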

\begin{proof}~
\begin{enumerate}
\item By Lemma \ref{lem: both goren}, $R$ is Gorenstein, so (1) follows from Lemma \ref{lem: L when T goren}.
\item By Lemma \ref{lem: both goren}, $t x \in X^0$ is a generator for $X^0$ as a $T^0$-module, and (2) follows from Lemma \ref{lem: L when T^0 goren}.
\item By Lemmas \ref{lem:fiber prod} and \ref{lem: both goren}, the map $R \to I$ given by $1 \mapsto t$ factors through an isomorphism $R^0 \isoto I$. Since $\al \in I \lb \Zpx \rb$, there is a unique element $\al_t \in T^0 \lb \Zpx \rb$ such that $\al = \al_t \cdot t$. Then, by (1), $\cL=\al_t \cdot tx$. On the other hand, by (2), $\alcv{tx}$ is the unique element of $T^0 \lb \Zpx\rb$ satisfying $\aL = \alcv{tx} \cdot tx$. Hence $\al_t = \alcv{tx}$, and (3) follows.
\end{enumerate}\end{proof}

Note that $R\lb \Zpx \rb$ (and similarly $R^0\lb \Zpx \rb$) is a semi-local ring with components labeled by the characters of $(\Z/p\Z)^\times$; 
choosing a generator of $1 + p\Z_p$, each component is isomorphic to a power series ring $R \lb u \rb$.  For $j  \in \{0, \dots, p-2\}$ and $f \in R \lb \Zpx \rb$, let $f(\omega^j) \in R \lb u \rb$ denote the image of $f$ in the component labeled by $\omega^j$; this is called the \emph{$\omega^j$-branch} of $f$. 
If $c: R \lb u \rb \to R$ and is a $R$-algebra homomorphism, we often write $f(\omega^j,c) \in R$ instead of $c(f(\omega^j))$, and think of this as `` evaluation at $c$''.

\begin{lem}
\label{lem: lam(c) generates}
Suppose that $I$ is principal and that there is a $j \in \{0,\dots, p-2\}$ and an $R$-algebra homomorphism $c: R\lb u \rb \onto R$ such that $\al(\omega^j,c) \in I$ is a generator. 
Let $t=\al(\omega^j,c)$ and let $L_{tx}^0 \in R^0\lb \Zpx \rb$ be as in Lemma \ref{lem: L when I principal}(2).  Then:
\begin{enumerate}
\item \label{part:factor}$\al(\omega^j) = \al(\omega^j,c) \cdot L_{tx}^0(\omega^j)$, and
\item \label{part:unit} $L_{tx}^0(\omega^j) \in (R^0 \lb u \rb)^\times$.
\end{enumerate}
In particular, $\alc(\omega^j) \in   (R^0 \lb u \rb)^\times/(R^0)^\times$ and $\al(\omega^j) \equiv  \al(\omega^j,c) \pmod{ (R^0 \lb u \rb)^\times}$.
\end{lem}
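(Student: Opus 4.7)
The plan is to derive (1) directly from Lemma \ref{lem: L when I principal}(3) by extracting the $\omega^j$-branch, and then to obtain (2) by applying $c$ to that factorization, using that $I$ is faithful and free of rank one over $R^0$.

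First, for (1): applying Lemma \ref{lem: L when I principal}(3) with the generator $t = \al(\omega^j,c)$ gives $\al = \alcv{tx} \cdot t$ in $I\lb\Zpx\rb$, whose $\omega^j$-branch is exactly the asserted equality
\[
\al(\omega^j) = \al(\omega^j,c) \cdot \alcv{tx}(\omega^j).
\]

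The main content is (2). The $R$-algebra map $c\colon R\lb u \rb \to R$ does not itself factor through $R^0\lb u \rb$, but reduction modulo $\Ann_R(I)$ induces an $R^0$-algebra homomorphism $c^0 \colon R^0\lb u \rb \to R^0$ whose kernel is contained in $\m_{R^0\lb u \rb}$ (since $c(u) \in \m_R$ forces $c^0(u) \in \m_{R^0}$). Moreover, $c$ is compatible with $c^0$ in the sense that $c(g \cdot i) = c^0(g) \cdot i$ for $g \in R^0\lb u \rb$ and $i \in I$, where the products use the $R^0$-module structure on $I$ (well-defined because $\Ann_R(I)$ kills $I$). Applying $c$ to the identity of (1) and using $c(\al(\omega^j)) = \al(\omega^j,c) = t$ and $c(t) = t$ gives
\[
t = c^0\bigl(\alcv{tx}(\omega^j)\bigr) \cdot t.
\]
By Lemma \ref{lem: both goren}, $I$ is a free $R^0$-module of rank one on $t$, so this forces $c^0(\alcv{tx}(\omega^j)) = 1$; hence $\alcv{tx}(\omega^j) \in 1 + \ker(c^0) \subseteq (R^0\lb u \rb)^\times$.

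The concluding ``in particular'' statements are formal consequences: by Lemma \ref{lem: L when T^0 goren}, $\alc(\omega^j)$ is the class of $\alcv{tx}(\omega^j)$ in $R^0\lb u \rb /(R^0)^\times$, so (2) gives $\alc(\omega^j) \in (R^0\lb u \rb)^\times/(R^0)^\times$, and substituting into (1) shows $\al(\omega^j) \equiv \al(\omega^j,c) \pmod{(R^0\lb u \rb)^\times}$. The only delicate point is the bookkeeping needed to transfer the $R$-algebra map $c$ into a relation in the $R^0$-module $I$; once $c^0$ and its compatibility with the action on $I$ are set up, the argument is essentially formal.
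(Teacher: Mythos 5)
Your proposal is correct and follows the paper's own argument: part (1) is the $\omega^j$-branch of Lemma \ref{lem: L when I principal}(3), and part (2) comes from applying the evaluation homomorphism and using that $I$ is free of rank one over $R^0$ together with locality of $R^0\lb u \rb$. Your extra care in distinguishing $c$ from the induced map $c^0$ on $R^0\lb u \rb$ and checking its compatibility with the $R^0$-module structure on $I$ is a welcome tightening of a point the paper glosses over, but it is not a different route.
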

\begin{proof}
By Lemma \ref{lem: L when I principal} \eqref{part:rel},  there is an equality $\al(\omega^j)=\alcv{tx}(\omega^j) t$., which proves \eqref{part:factor}. Applying the homomorphism $c$ yields
\[
t = \al(\omega^j,c) = c(\alcv{tx}(\omega^j))t.
\]
Since $I$ is a free $R^0$-module,  this implies that $c(\alcv{tx}(\omega^j))=1$ and hence that $\alcv{tx}(\omega^j) \in (R^0 \lb u \rb)^\times$ as $R^0 \lb u \rb$ is local.  This proves \eqref{part:unit}. 
\end{proof}

\subsection{Content and $\mu$-invariant}
\label{subsec:axiomatic mu}
These results about the relationships between the modular and cuspidal normalizations have implications about their content and $\mu$-invariants.  We first recall the definitions of these concepts.
\begin{defn}
Let $B$ be a commutative ring $B$. The \emph{content} of a power series $f= \sum a_i(f)u^i \in B \lb u \rb$ is the ideal $\mathrm{content}_B(f) \subset B$ generated by all the coefficients $a_i(f)$; the series $f$ \emph{has unit content} if $\mathrm{content}_B(f)=B$.  Note that, for every $b \in B$, there is an equality
\begin{equation}
\label{eq:content multiplies}
\mathrm{content}_B(bf) = b \cdot \mathrm{content}_B(f),
\end{equation}
and, in particular, that the content of $f$ depends only on the image of $f$ in $B \lb u \rb /B^\times$.

If $B$ is local and $f$ has unit content, then $a_i(f) \in R^\times$ for some $i$, and the \emph{$\lambda$-invariant} $\lambda(f) \in \Z$ is defined as the minimal $i$ such that $a_i(f) \in R^\times$. Note that $f$ is a unit if and only if it has unit content and $\lambda(f)=0$.

Finally,  if $B$ is a DVR with uniformizer $\varpi$,  then the \emph{$\mu$-invariant} $\mu(f) \in \Z$ is defined to the unique integer $n$ such that $\mathrm{content}_R(f)=\varpi^nR$. In this case, $f$ has unit content if and only if $\mu(f)=0$.
\end{defn}

Note that the values of a power series function are in the content ideal. In other words, if $c: B \lb u \rb \to B$ is $B$-algebra homomorphism, then $c(f) \in \mathrm{content}_B(f)$.  Indeed, $c(f) = \sum a_i(f) c(u)^i$ and each $a_i(f)$ is in $\mathrm{content}_B(f)$.

Now let $(\sO,A,R,\E,r_0)$ be a single-Eisenstein Hecke algebra setup and let $(X,\phi,x,\cL)$ be an $L$-symbol setup for it.

\begin{lem}
\label{lem: content when lam(c) generates}
Assume that $I$ is principal, fix $j \in \{0,\dots,p-1\}$, and let $c\colon R\lb u \rb \to~R$ be an $R$-algebra homomorphism.
\begin{enumerate}
\item $L(\omega^j,c) \in \mathrm{content}_R(L(\omega^j))$.
\item There is an equality $\mathrm{content}_R(\al(\omega^j)) =\mathrm{content}_R(\alc(\omega^j))I$ of ideals in $R$.
\item If $L(\omega^j,c)$ is a generator of $I$, then $ \mathrm{content}_R(L(\omega^j))=I$ and $L^0(\omega^j)$ is a unit.
\end{enumerate}
\end{lem}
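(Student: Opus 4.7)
My plan is to dispatch (1) immediately, then deduce (2) by applying the factorization from Lemma~\ref{lem: L when I principal}\eqref{part:rel} together with the multiplicativity property \eqref{eq:content multiplies} of content, and finally get (3) by combining (2) with Lemma~\ref{lem: lam(c) generates}.

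For (1), I will just invoke the observation immediately preceding the lemma: for any $B$-algebra homomorphism $c\colon B \lb u \rb \to B$ and any $f \in B \lb u \rb$, the value $c(f)$ lies in $\mathrm{content}_B(f)$. Applying this with $B=R$ and $f = L(\omega^j)$ gives the claim.

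For (2), the key input is that $I$ is principal. Choose any generator $t$ of $I$. Lemma~\ref{lem: both goren} then says that $tx$ generates $X^0$ as an $R^0$-module, and Lemma~\ref{lem: L when I principal}\eqref{part:rel} gives
\[
L(\omega^j) = L^0_{tx}(\omega^j) \cdot t
\]
inside $I \lb u \rb \subset R \lb u \rb$. I need a version of \eqref{eq:content multiplies} with coefficients: if $f = \sum b_i u^i \in R^0 \lb u \rb$, then $tf = \sum (tb_i) u^i \in I \lb u \rb$, where $tb_i \in I$ is well defined using the $R^0$-module structure on $I$. The ideal of $R$ generated by the $tb_i$ equals $t \cdot (b_i)_{R^0} \subset I \subset R$ via the $R^0$-module isomorphism $R^0 \isoto I$, $1 \mapsto t$ of Lemma~\ref{lem: both goren}. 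Hence
\[
\mathrm{content}_R(L(\omega^j)) = t \cdot \mathrm{content}_{R^0}(L^0_{tx}(\omega^j)).
\]
Because \eqref{eq:content multiplies} shows the content is insensitive to multiplication by units, $\mathrm{content}_{R^0}(L^0(\omega^j))$ is a well-defined ideal of $R^0$ independent of the choice of $e$, and equals $\mathrm{content}_{R^0}(L^0_{tx}(\omega^j))$. Rewriting $t \cdot \mathrm{content}_{R^0}(L^0(\omega^j))$ as $\mathrm{content}_{R^0}(L^0(\omega^j)) \cdot I$ (which makes sense as an ideal of $R$ via $I = tR^0$) gives (2).

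For (3), suppose $L(\omega^j,c)$ generates $I$. Lemma~\ref{lem: lam(c) generates}\eqref{part:unit} says $L^0(\omega^j) \in (R^0 \lb u \rb)^\times / (R^0)^\times$, i.e.\ $L^0(\omega^j)$ is a unit; equivalently, some coefficient of a representative lies in $(R^0)^\times$, so $\mathrm{content}_{R^0}(L^0(\omega^j)) = R^0$. Substituting into (2) yields $\mathrm{content}_R(L(\omega^j)) = I$, completing the proof.

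The only delicate point is bookkeeping in (2): content is an $R$-ideal on the left but is originally an $R^0$-ideal on the right, and $L$ and $L^0_{tx}$ live in different modules. Once the identification $I \cong R^0$ is used carefully, everything is mechanical.
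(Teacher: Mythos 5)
Your proof is correct and follows essentially the same route as the paper, which simply cites the general fact about values lying in the content ideal for (1) and Lemmas \ref{lem: L when I principal} and \ref{lem: lam(c) generates} for (2) and (3). The extra bookkeeping you supply in (2) — tracking the identification $R^0 \isoto I$, $1 \mapsto t$, so that multiplying contents across the factorization $L(\omega^j) = L^0_{tx}(\omega^j)\cdot t$ is legitimate — is exactly the detail the paper leaves implicit, and you handle it correctly.
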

\begin{proof}
(1) is the general fact that the values of a power series function are in the content ideal.  (2) and (3) follow from Lemmas \ref{lem: L when I principal} and \ref{lem: lam(c) generates}, respectively.
\end{proof}

To discuss $\mu$-invariants, we must work over a DVR, which in our applications involves fixing a weight. We automatize this as follows.

\begin{defn}
\label{defn:fixed weight}
A \emph{fixed-weight setup} is a single-Eisenstein Hecke algebra setup $(\sO,A,R,\E,r_0)$ such that the ideal $I=\ker(\E)$ is principal, together with an $L$-symbol setup $(X,\phi,x,\cL)$ for it, and
\begin{itemize}
\item a generator $t$ of $I$, 
\item an $\sO$-algebra homomorphism $w: A \to \sO$.
\end{itemize}
Given this setup, let $F(X) \in A[X]$ be the characteristic polynomial of $t$, as in Lemma \ref{lem:structure of T} and let $F_w(X) \in \sO[X]$ denote the image of $F(X)$ under $w$.  Let $F_w(X) = \prod_{i=1}^{r_w} F_{w,i}(X)$ be the factorization of $F_w(X)$ in $\sO[X]$ into irreducible polynomials, and let $\sO_{w,i}$ be the normalization of $\sO[X]/(F_{w,i}(X))$, and let $\varpi_{w,i}$ be a uniformizer in $\sO_{w,i}$ and let $X_{w,i}$ be the image of $X$ in $\sO_{w,i}$. Then, by Lemma \ref{lem:structure of T}, the normalization of $R^0_w$ is isomorphic to $\prod_{i=1}^{r_w} \sO_{w,i}$.  For $j \in \{0,\dots,p-2\}$ and $i \in \{1, \dots, {r_w}\}$, let $\al_{w,i}(\omega^j)\in \sO_{w,i}\lb u \rb$ be the image of $\al(\omega^j)$ under the map
\[
R \lb u \rb \to R_w \lb u \rb \to \sO_{w,i}\lb u \rb.
\]
\end{defn}

\begin{rem}
\label{rem:fixed weight}
In our applications, the irreducible factors $F_{w,i}(X)$ correspond to the local-Galois-conjugacy classes of cuspidal eigenforms of weight $w$ that are congruent to the given Eisenstein series, and the integer $r_w$ is the number of such classes. The rings $\sO_{w,i}$ are the valuation ring in their corresponding Hecke fields, and the elements $\al_{w,i}(\omega^j)$ are their $p$-adic $L$-functions.  In this case, we will denote the conjugacy classes by $f_{w,i}$ to evoke the fact that the correspond to modular forms, and similarly denote $\sO_{w,i}$ by $\sO_{f_{w,i}}$, $\varpi_{w,i}$ by $\varpi_{f_{w,i}}$, and $\al_{w,i}(\omega^j)$ by $\al_p^+(f_{w,i},\omega^j)$.
\end{rem}

\begin{lem}
\label{lem:mu_add}
Let $(\sO,A,R,\E,r_0)$ be a single-Eisenstein Hecke algebra setup, let $(X,\phi,x,\cL)$ be an $L$-symbol setup for it and let $w: A \to \sO$ be an $\sO$-algebra homomorphism.  Suppose there is a surjective $R$-algebra homomorphism $c: R \lb u \rb \to R$ such that $\al(\omega^j,c)$ is a generator of $I$.  These data give a fixed-weight setup; let $F_w(X) \in \sO[X]$ and $L_{w,i}(\omega^j) \in \sO_{w,i} \lb u \rb$ be as in Defintion \ref{defn:fixed weight} for this setup. 
\begin{enumerate}
\item
\label{part:mu_add1}
 There is an equality
\[
\mathrm{val}_\varpi(F_w(0)) = \sum_{i=1}^{r_w} \mu(\al_{w,i}(\omega^j)).
\]
\item 
\label{part:mu_add2}
Suppose that there is an integer $M>0$ such that $\mathrm{val}_\varpi(F_w(0))>M r$. Then there exists an $i$ such that $\mu(\al_{w,i}(\omega^j))>M$. 
\end{enumerate}
\end{lem}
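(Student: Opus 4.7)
The plan is to use the factorization of $L(\omega^j)$ provided by the earlier lemmas to reduce the $\mu$-invariants to $\varpi_{w,i}$-valuations of $X_{w,i}$, and then recognize $\mathrm{val}_\varpi(F_w(0))$ as the length of the cokernel of multiplication by $t_w$, which decomposes through the normalization of $R^0_w$ into a sum of those valuations.

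First, set $t := L(\omega^j,c)$, which is a generator of $I$ by hypothesis.  Lemma~\ref{lem: lam(c) generates}(2) then gives $L^0_{tx}(\omega^j) \in (R^0 \lb u \rb)^\times$, and Lemma~\ref{lem: L when I principal}(3) gives $L(\omega^j) = t \cdot L^0_{tx}(\omega^j)$ in $R\lb u \rb$.  Applying the specialization $w: A \to \sO$ followed by the projection onto the $i$-th factor of the normalization $R^0_w \hookrightarrow \prod_i \sO_{w,i}$ yields
\[
L_{w,i}(\omega^j) = X_{w,i} \cdot L^0_{w,i}(\omega^j),
\]
where $L^0_{w,i}(\omega^j) \in \sO_{w,i} \lb u \rb^\times$ is a unit (the image of a unit under a local ring homomorphism).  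Using~\eqref{eq:content multiplies} and the fact that units have unit content, this yields $\mu(L_{w,i}(\omega^j)) = v_{\varpi_{w,i}}(X_{w,i})$.

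Next, I would recognize $\mathrm{val}_\varpi(F_w(0))$ as $\mathrm{length}_\sO(R^0_w / t_w R^0_w)$: by Lemma~\ref{lem:structure of T}, $F_w$ is the characteristic polynomial of multiplication by $t_w$ on the free $\sO$-module $R^0_w$, so $F_w(0)$ is (up to sign) the determinant of this action, whose $\varpi$-adic valuation equals the length of the cokernel.  A snake-lemma argument applied to the conductor sequence $0 \to R^0_w \to \prod_i \sO_{w,i} \to (\prod_i \sO_{w,i})/R^0_w \to 0$, together with the equality of kernel and cokernel lengths for an endomorphism of a finite-length module, identifies this length with $\mathrm{length}_\sO\bigl((\prod_i \sO_{w,i})/t(\prod_i \sO_{w,i})\bigr) = \sum_i \mathrm{length}_\sO(\sO_{w,i}/X_{w,i}\sO_{w,i})$.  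In the intended applications, $\sO$ is enlarged so that the residue-field extensions $\kappa_{w,i}/\kappa$ are trivial, in which case this sum reduces to $\sum_i v_{\varpi_{w,i}}(X_{w,i})$; combined with the previous paragraph this proves part~\eqref{part:mu_add1}.

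Part~\eqref{part:mu_add2} is then an immediate pigeonhole consequence of~\eqref{part:mu_add1}: if $\mu(L_{w,i}(\omega^j)) \le M$ for every $i$, summing yields $\mathrm{val}_\varpi(F_w(0)) \le M r_w$, contradicting the hypothesis.  The main subtlety lies in the length computation above, where one must carefully pass from $R^0_w$ to its normalization and handle the residue-field degrees; the factorization argument itself is a direct unwinding of the earlier Gorenstein algebra.
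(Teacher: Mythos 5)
Your argument is correct and matches the paper's proof in its essential structure: both begin from the factorization $L(\omega^j) = t \cdot L^{0}_{tx}(\omega^j)$ with $L^{0}_{tx}(\omega^j) \in (R^0\lb u \rb)^\times$ (Lemmas \ref{lem: L when I principal} and \ref{lem: lam(c) generates}), specialize along $R \to R_w \to \sO_{w,i}$ to get $L_{w,i}(\omega^j) = X_{w,i}\cdot(\text{unit})$ and hence $\mu(L_{w,i}(\omega^j)) = \mathrm{val}_{\varpi_{w,i}}(X_{w,i})$, and then identify $\mathrm{val}_\varpi(F_w(0))$ with $\sum_i \mathrm{val}_{\varpi_{w,i}}(X_{w,i})$. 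The only divergence is in that last accounting step --- the paper factors $F_w(0)=\prod_i F_{w,i}(0)$ and uses $\mathrm{val}_\varpi(F_{w,i}(0))=\mathrm{val}_{\varpi_{w,i}}(X_{w,i})$ directly, while you compute $\mathrm{val}_\varpi(F_w(0))$ as $\mathrm{length}_{\sO}(R^0_w/t_wR^0_w)$ and pass through the normalization; these are the same norm computation in two guises, and your explicit caveat that the residue-field extensions of the $\sO_{w,i}$ over $\sO$ must be trivial for the sum to collapse to $\sum_i \mathrm{val}_{\varpi_{w,i}}(X_{w,i})$ (rather than the sum weighted by residue degrees) flags a point the paper's own proof passes over silently.
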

\begin{proof}
By Lemma \ref{lem: lam(c) generates}, there is an equality
\[
L(\omega^j) = L(\omega^j,c) L_{tx}^0(\omega^j) = t  L_{tx}^0(\omega^j)
\]
and $ L_{tx}^0(\omega^j) \in R^0\lb u \rb^\times$. Since, for each $i=1,\dots,{r_w}$,  the map
\[
R \to R_w \to \sO_{w,i}
\]
sends $t$ to $X_{w,i}$, it follows that $L_{w,i}(\omega^j)=X_{w,i} U_i$ for a unit $U_i \in \sO_{w,i}\lb u \rb^\times$.  In particular, the $\mu$-invariant of $L_{w,i}(\omega^j)$ is the valuation of $X_{w,i}$:
\[
\mu(L_{w,i}(\omega^j)) = \mathrm{val}_{\varpi_{w,i}}(X_{w,i}).
\]
Since $\sO_{w,i}$ is the normalization of $\sO[X]/(F_{w,i}(X))$,  the valuation of $X_{w,i}$ is given by
\[
\mathrm{val}_{\varpi_{w,i}}(X_{w,i}) = \mathrm{val}_{\varpi}(F_{w,i}(0)).
\]
Combining the last two equalities with the fact that $F_w(0)=\prod_i F_{w,i}(0)$ gives
\[
\mathrm{val}_\varpi(F_w(0)) = \sum_{i=1}^r \mathrm{val}_\varpi(F_{w,i}(0)) = \sum_{i=1}^r \mu(L_{w,i}(\omega^j)),
\]
which proves (1).  Part (2) is clear from (1).
\end{proof}

\begin{exmp}
Suppose that $R^0=A$. Then, by Example \ref{exmp: rank 1}, $F(X)=X+\E(r_0)$, so $F_w(X)=X-w(\E(r_0))$ for every choice of $w$, and $\mu(\al_w(\omega^j))=\mathrm{val}_\varpi(w(\E(r_0)))$.
\end{exmp}

\section{Two-variable $p$-adic $L$-symbols}
\label{sec:pL}
The singular cohomology groups 
$$
H^1(Y_1(Np^r),\Z_p) \mbox{~~and~~} H^1(X_1(Np^r),\Z_p)
$$
can be respectively identified, via Poincar\'e duality with the singular homology groups
$$
H_1(Y_1(Np^r),\{{\rm cusps}\},\Z_p) \mbox{~~and~~} H_1(X_1(Np^r),\Z_p)
$$
as in \cite[Proposition 3.5]{sharifi2011}.    We will make this identification implicitly and write $\{ \alpha, \beta \}_r$ for the cohomology class corresponding to the geodesic connecting $\alpha$ to $\beta$ for $\alpha$ and $\beta$ in $\bP^1(\Q)$.  These groups can also be identified with \'{e}tale (co)homology groups, but one has to be careful about Galois actions, since Poincar\'{e} duality has a one-Tate-twist in it---this is all discussed in \cite[Section 3.5]{sharifi2011}.

Write $H^1(Y_1(Np^r),\Z_p)^{\pm, \ord}$ for the ordinary subspace of this cohomology group with sign $\pm$ (for the action of complex conjugation) and set $\{\alpha,\beta\}_r^{\pm,\ord}$ to be the projection of $\{\alpha,\beta\}_r$ to this subspace.  
Set $H^1_{\Lambda}(Y_1(N)) := \ds \varprojlim_r H^1(Y_1(Np^r),\Z_p)^{\ord}$ and analogously define $H^1_{\Lambda}(X_1(N))$.  We can then write down the Mazur--Kitigawa two-variable $p$-adic $L$-symbol explictly as follows:
$$
\cL_p^\pm := \varprojlim_r \left( \sum_{a \in \left( \Z / p^r \Z \right)^\times} U_p^{-r} \{\infty,a/p^r\}^{\mp,\ord}_r \otimes [a] \right)_r
\in H^1_{\Lambda}(Y_1(N))^\mp \otimes \Z_p \lb \Zpx \rb.
$$
(The sign change here is intentional:\ we want to consider $\cL_p$ as a \emph{functional} on cohomology classes ({\it i.e.}\ a homology class), and the sign change appears because of the one-twist mentioned above in Poincar\'{e} duality.  With this convention, $\cL_p^\pm$ is a functional on cohomology classes of the same sign). By \cite[Proposition 4.3.4]{ohta1999}, we have that $\{\infty,a/p^r\}_r^{\ord}$ is in $H^1(X_1(Np^r),\Z_p)^{\ord}$ and thus $\cL_p^\pm$ actually lives in $H^1_{\Lambda}(X_1(N))^\mp \otimes \Z_p\lb \Zpx \rb$. 

Lastly, let $\bT$ denote the Hida Hecke algebra (to be defined more carefully in the following section).  Then  $H^1_\Lambda(Y_1(N))^\pm$ is a $\bT$-module and for any maximal ideal $\m \subseteq \bT$,  we have that $H^1_\Lambda(Y_1(N))^\pm_\m$ is a direct summand of $H^1_\Lambda(Y_1(N))^\pm$.  We write $\cL_p^\pm(\m)$ for the projection of $\cL_p^\pm$ to $H^1_\Lambda(Y_1(N))^\pm_\m \lb \Zpx \rb$.

\section{The primitive case}
\label{sec:primitive}
In this section, we consider the case Eisenstein families with a \emph{primitive} tame character.  In this case,  congruences modulo $p$ between Eisenstein series and cuspforms arise because $p$ divides an $L$-value, as in \cite{ribet1976}. In the next section, we will consider the case of \emph{trivial} tame character and congruences that occur because $p$ divides an Euler-factor, as in \cite{mazur1978}. This primitive case is the same setting that was considered in \cite{BP2018} and we obtain similar results. The main novelty is that we highlight the role played by the two possible normalizations, modular and cuspidal, which allows us to obtain results and conjectures when the Eisenstein ideal is not assumed to be principal.

\subsection{Setup}
\label{subsec:primitive setup}
  Let $p\ge 5$ and let $N$ be an integer with $p \nmid N\varphi(N)$. Let $\fH$ denote the $p$-adic Hida Hecke algebra of tame level $\Gamma_1(N)$. It is an algebra over $\Z_p \lb (\Z/N\Z)^\times \times \Z_p^\times\rb$ generated by $T_q$ for primes $q \nmid Np$ and $U_\ell$ for $\ell \mid Np$. The Eisenstein ideal $\mathcal{I} \subset \fH$ is the ideal generated by $T_q-(1+\dia{q}q^{-1})$ for $q \nmid Np$ and by $U_\ell-1$ for $\ell \mid Np$.

Let $\m \subset \fH$ be a (Good Eisen) maximal ideal, in the sense of \cite[\S 3.1]{BP2018}, and let $\bT_\m$ be the completion of $\fH$ at $\m$. This determines a character $\theta_\m : (\Z/Np\Z)^\times \to \overline{\Q}_p^\times$, as explained in \emph{loc.\ cit.}, which can be written as $\theta_\m=\omega^{j(\m)}\psi_\m$ with $j(\m) \in \{0,\dots, p-2\}$ and $\psi_\m$ a character of $(\Z/N\Z)^\times$.  The (Good Eisen) condition implies that $\psi_\m$ is primitive.
Let $\sO_\m$ be the valuation ring in the $p$-adic field generated by the values of $\theta_\m$ and let $\varpi \in \sO_\m$ be a uniformizer. Let $\Lambda_\m=\sO_\m \lb T \rb$; it is a regular local flat $\sO_\m$-algebra, and $\bT_\m$ is a finite flat local $\Lambda_\m$-algebra. By \cite[Lemma 3.1]{BP2018}, there is a $\Lambda_\m$-algebra homomorphism $\mathrm{Eis}_\m:\bT_\m \to \Lambda_\m$ with kernel $\mathcal{I}_\m$ that satisfies $\Ann_{\bT_\m}(\I_\m)\cong \Lambda_\m$ and is  generated by the Hecke operator $T_0$ determined by $a_1(T_0f)=a_0(f)$. 

It is straightforward to see that $(\sO,A,R,\mathcal{E},r_0)=(\sO_\m,\Lambda_\m,\bT_\m,\mathrm{Eis}_\m, T_0)$ is a single-Eisenstein Hecke algebra setup in the sense of Definition \ref{def:single eisen}. We note that $\mathcal{E}(r_0) = \mathrm{Eis}_\m(T_0)$ is the  constant term of the Eisenstein family, which in this case is the Kubota-Leopoldt series $L_p(\psi_\m^{-1},\kappa) \in \Lambda_\m$ of \cite[\S 3.8]{BP2018}.

We seek now to form our $L$-symbol setup as in Definition \ref{def:Lsetup}.
Let $C_1(Np^r)$ denote the cusps of $X_1(Np^r)$; following \cite[Section 1.3.2]{FK2024},  cusps lying over the cusp $0$ of $X_0(Np^r)$ will be called \emph{0-cusps}. Let $\cC_r = \ker(\Z_p[C_1(Np^r)(\C)] \stackrel{\Sigma}{\to} \Z_p)$ where $\Sigma$ is the augmentation map.  There is an exact sequence
\begin{equation}
\label{eq:level r boundary sequence}
0 \to H^1(X_1(Np^r),\Z_p) \to H^1(Y_1(Np^r),\Z_p) \xrightarrow{\partial} \cC_r \to 0,
\end{equation}
where $\partial$ is the boundary map at the cusps, which satisfies $\partial(\{\alpha,\beta\}_r)=\alpha-\beta$.
By \cite[Proposition (3.1.2)]{Ohta2003},  the localized inverse limit $\cC_\m = ( \varprojlim_r (\cC_r))_{\m}$ is free of rank one as a $\Lambda$-module, generated by the projection of the class of $0-\infty$ to the $\m$-part.   
Taking inverse limit and localization of the sequences \eqref{eq:level r boundary sequence} then yields an exact sequence
\begin{equation}
\label{eq:Lambda boundary sequence}
0 \to H^1_\Lambda(X_1(N))_{\m} \to H^1_\Lambda(Y_1(N))_{\m} \xrightarrow{\phi} \Lambda_\m \to 0
\end{equation}
where $\phi(\zinf)=1$.  This is the exact sequence of \cite[Theorem (1.5.5) (III)]{Ohta2003} (see also \cite[Section 6.2.5]{FK2024} for the description of $\phi$ as the boundary at 0-cusps).

Since $\zinf \in H^1_\Lambda(Y_1(N))_{\m}^-$,  taking minus-parts of \eqref{eq:Lambda boundary sequence} yields an exact sequence
\begin{equation}
\label{eq:minus part sequence}
0 \to H^1_\Lambda(X_1(N))^-_{\m} \to H^1_\Lambda(Y_1(N))^-_{\m} \xrightarrow{\phi} \Lambda_\m \to 0.
\end{equation}

\begin{prop}
\label{prop:primitive L setup}
Taking $(X,\phi,x,\cL) = (H^1_\Lambda(Y_1(N))^-_{\m},\phi,\zinf,\cL^+_p(\m))$ gives an $L$-symbol setup.
\end{prop}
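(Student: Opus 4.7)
The plan is to verify the four bullet conditions of Definition~\ref{def:Lsetup} in order. The $\Tm$-module structure on $X = H^1_\Lambda(Y_1(N))^-_\m$ and the $\Lambda_\m$-linearity of $\phi$ are immediate from the exact sequence \eqref{eq:minus part sequence}, which is part of Ohta's Theorem (1.5.5)(III) of \cite{Ohta2003}. The equality $\phi(\zinf) = 1$ is noted directly in the text following \eqref{eq:Lambda boundary sequence}. For the membership $\cL^+_p(\m) \in \ker(\phi) \otimes_{\sO_\m} \sO_\m\lb\Zpx\rb$, I would recall from Section~\ref{sec:pL} that, by \cite[Proposition 4.3.4]{ohta1999}, the two-variable $p$-adic $L$-symbol $\cL^+_p$ lives in $H^1_\Lambda(X_1(N))^- \otimes \Z_p\lb\Zpx\rb$; after localizing at $\m$, this is exactly $X^0 \otimes_{\sO_\m} \sO_\m\lb\Zpx\rb$ via the identification from \eqref{eq:minus part sequence}.

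The substantive condition is the existence of an isomorphism $f: X \isoto \Hom_{\Lambda_\m}(\Tm, \Lambda_\m)$ of $\Tm$-modules that sends $X^0$ isomorphically onto $\Hom_{\Lambda_\m}(\Tmo, \Lambda_\m)$. For the ambient isomorphism I would invoke Ohta's Eichler--Shimura isomorphism for Hida families \cite{ohta1999}, which identifies $X$ as a dualizing module for $\Tm$; the analogous statement applied to cuspidal cohomology identifies $X^0$ as a dualizing module for $\Tmo$. To obtain the required restriction property, I would compare \eqref{eq:minus part sequence} with the short exact sequence
$$
0 \to \Hom_{\Lambda_\m}(\Tmo, \Lambda_\m) \to \Hom_{\Lambda_\m}(\Tm, \Lambda_\m) \to \Hom_{\Lambda_\m}(\Ann_{\Tm}(\I_\m), \Lambda_\m) \to 0
$$
obtained by applying $\Hom_{\Lambda_\m}(-, \Lambda_\m)$ to the structural sequence $0 \to \Ann_{\Tm}(\I_\m) \to \Tm \to \Tmo \to 0$; exactness uses the $\Lambda_\m$-freeness of each term.

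The main obstacle is confirming that Ohta's isomorphism intertwines these two exact sequences. I expect this to follow from the naturality of the Eichler--Shimura isomorphism with respect to the open immersion $Y_1(Np^r) \hookrightarrow X_1(Np^r)$: the induced map on cohomology is Hecke-equivariant, and under the duality pairing it should correspond to the restriction map $\Hom_{\Lambda_\m}(\Tm, \Lambda_\m) \to \Hom_{\Lambda_\m}(\Tmo, \Lambda_\m)^\vee$ dual to the quotient $\Tm \onto \Tmo$. Once this compatibility yields a commutative diagram of short exact sequences, the five lemma delivers an $f$ with the required restriction property, completing the verification.
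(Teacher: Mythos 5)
Your verification of the first three bullet conditions, and your identification of the substantive point, match the paper's proof exactly: the paper likewise reads $\ker(\phi)=H^1_\Lambda(X_1(N))^-_\m$ off the sequence \eqref{eq:minus part sequence}, notes $\phi(\zinf)=1$, and places $\cL_p^+(\m)$ in $X^0\otimes_{\Z_p}\Z_p\lb\Zpx\rb$ via \cite[Proposition 4.3.4]{ohta1999}. The one place where you stop short is precisely the step you flag as the ``main obstacle'': the compatibility of the dualizing-module isomorphism $H^1_\Lambda(Y_1(N))^-_\m\cong\Hom_{\Lambda_\m}(\Tm,\Lambda_\m)$ with the two short exact sequences, i.e.\ that it carries $H^1_\Lambda(X_1(N))^-_\m$ onto $\Hom_{\Lambda_\m}(\Tmo,\Lambda_\m)$. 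The paper does not re-derive this from naturality of Eichler--Shimura; it cites it directly, as \cite[Diagram (3.5.3)]{Ohta2003} together with \cite[\S 1.7.13 and Proposition 6.3.5]{FK2024}. Your proposed route --- dualizing $0\to\Ann_{\Tm}(\I_\m)\to\Tm\to\Tmo\to 0$ and matching it against \eqref{eq:minus part sequence} via the five lemma --- is the right shape of argument (and indeed is how the compatibility is organized in Ohta's diagram), but as written it remains an expectation rather than a proof: the duality here is not the naive Poincar\'e pairing but Ohta's twisted pairing between cohomology and the Hecke algebra, and checking that the boundary map $\phi$ is dual to the inclusion $\Ann_{\Tm}(\I_\m)\hookrightarrow\Tm$ is exactly the nontrivial content. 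You should close this by citing the references above (also note a small slip: the map induced on duals by $\Tm\onto\Tmo$ is the inclusion $\Hom_{\Lambda_\m}(\Tmo,\Lambda_\m)\hookrightarrow\Hom_{\Lambda_\m}(\Tm,\Lambda_\m)$, not a map out of $\Hom_{\Lambda_\m}(\Tm,\Lambda_\m)$).
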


\begin{proof}
The exact sequence \eqref{eq:minus part sequence} implies that $\ker(\phi)=H^1_{\Lambda}(X_1(N))^-_\m$.  
By \cite[\S 1.7.13 and Proposition 6.3.5]{FK2024}, $H^1_\Lambda(Y_1(N))^-_{\m}$ is a dualizing module for $\Tm$ and $H^1_\Lambda(X_1(N))^-_{\m}$ is a dualizing module for $\Tm^0$. Moreover, these isomorphism are compatible in the sense that the isomorphism $H^1_\Lambda(Y_1(N))^-_{\m} \cong \Hom_{\Lambda_\m}(\Tm,\Lambda_\m)$ sends $H^1_\Lambda(X_1(N))^-_{\m}$ isomorphically to $\Hom_{\Lambda_\m}(\Tm^0,\Lambda_\m)$ (see \cite[Diagram (3.5.3)]{Ohta2003}).
Since $\phi(\zinf)=1$ and $\cL^+_p(\m) \in H^1_{\Lambda}(X_1(N))^-_\m \otimes_{\Z_p} \Z_p \lb\Z_p^\times\rb$, this completes the verification.
\end{proof}

\subsection{Results}
\label{subsec:primitive results}
Applying the results of \S \ref{sec:Axiomatics} yields the following theorem.
\begin{thm}
\label{thm:BP case}
\hfill 
\begin{enumerate}
\item 
\label{part:Tgoren}
If $\Tm$ is Gorenstein, then there is a unique $L^+_p(\m)\in \Tm \lb \Zpx \rb$ such that $\cL^+_p(\m)=L^+_p(\m) \cdot \zinf$.   Moreover, $\mathrm{content}(L^+_p(\m)) \subseteq \mathcal{I}_\m$.
\item 
\label{part:T0goren}
Suppose that $\Tmo$ is Gorenstein,  so that $H^1_\Lambda(X_1(N))^+_\m$ is free of rank 1 over $\Tmo$. Then for every generator $e$ of $H^1_\Lambda(X_1(N))^+_\m$,  there is a unique $L^+_p(\m)^{0}_e\in \bT_\m^0\lb \Zpx \rb$ such that $\cL^+_p(\m)=L^+_p(\m)^{0}_{e} \cdot e$.
\item 
\label{part:principal}
If $\I_\m$ is a principal ideal with generator $t$, then $\Tm$ and $\Tmo$ are Gorenstein and $H^1_\Lambda(X_1(N))^+_\m$ is generated by $t \zinf$. Moreover,  there is an equality $$L^+_p(\m)= L^+_p(\m)^{0}_{t\zinf} t$$ in $\Im \lb \Zpx \rb$.  
\end{enumerate}
\end{thm}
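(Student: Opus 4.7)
The strategy is to derive Theorem \ref{thm:BP case} as a direct translation of the axiomatic results of Subsection 2.3 through Proposition \ref{prop:primitive L setup}. That proposition certifies that $(H^1_\Lambda(Y_1(N))^-_{\m},\phi,\zinf,\cL^+_p(\m))$ is an $L$-symbol setup for the single-Eisenstein Hecke algebra setup $(\sO_\m,\Lambda_\m,\Tm,\mathrm{Eis}_\m, T_0)$. From \eqref{eq:minus part sequence}, the submodule $X^0=\ker(\phi)$ is precisely the cuspidal piece $H^1_\Lambda(X_1(N))^-_{\m}$, which (once the Poincar\'e-duality sign convention of Section \ref{sec:pL} is accounted for) matches the dualizing module of $\Tmo$ appearing in parts (\ref{part:T0goren}) and (\ref{part:principal}).

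For part (\ref{part:Tgoren}), I invoke Lemma \ref{lem: L when T goren}: it produces the unique $L^+_p(\m)\in\Tm\lb\Zpx\rb$ with $\cL^+_p(\m)=L^+_p(\m)\cdot\zinf$, and asserts moreover that $L^+_p(\m)\in\I_\m\lb\Zpx\rb$. Since every coefficient of $L^+_p(\m)$ therefore lies in $\I_\m$, the content ideal is automatically contained in $\I_\m$.

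Part (\ref{part:T0goren}) is an immediate application of Lemma \ref{lem: L when T^0 goren} to the cuspidal dualizing module. For part (\ref{part:principal}), suppose $\I_\m=(t)$. The equivalence (\ref{part:both2})$\Leftrightarrow$(\ref{part:both5}) in Lemma \ref{lem: both goren} yields Gorensteinness of both $\Tm$ and $\Tmo$, and its concluding ``furthermore'' clause guarantees that $t\cdot\zinf$ generates $H^1_\Lambda(X_1(N))^-_{\m}$ as a $\Tmo$-module. With this explicit generator in hand, Lemma \ref{lem: L when I principal}(\ref{part:rel}), specialized to $x=\zinf$, supplies the asserted identity $L^+_p(\m)=L^+_p(\m)^0_{t\zinf}\cdot t$ in $\I_\m\lb\Zpx\rb$.

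There is no genuine obstacle: all the substantive content has been absorbed into Proposition \ref{prop:primitive L setup} (via Ohta's Eichler--Shimura duality and the description of the cuspidal boundary map \eqref{eq:Lambda boundary sequence}) and into the commutative-algebra lemmas of Subsection 2.3. The only real task is this dictionary translation, the one point requiring care being to keep straight the sign conventions so that the cuspidal dualizing module is matched correctly with $X^0$.
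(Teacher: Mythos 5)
Your proposal is correct and follows exactly the paper's own route: the paper proves parts (1), (2), (3) by citing Lemmas \ref{lem: L when T goren}, \ref{lem: L when T^0 goren}, and \ref{lem: L when I principal} respectively, applied to the $L$-symbol setup of Proposition \ref{prop:primitive L setup}. Your additional remarks (deducing the content claim from $L^+_p(\m)\in\I_\m\lb\Zpx\rb$, routing the Gorenstein and generator claims of part (3) through Lemma \ref{lem: both goren}, and flagging the $\pm$ sign convention) just make explicit what the paper leaves implicit.
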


\begin{proof}
Part \eqref{part:Tgoren} is Lemma \ref{lem: L when T goren}.
Part \eqref{part:T0goren} is Lemma \ref{lem: L when T^0 goren}.  
Part \eqref{part:principal} is Lemma \ref{lem: L when I principal}.  \end{proof}
\begin{rem}
Part (3) implies that $t$ divides $L^+_p(\m)$, which is the content of \cite[Theorem 3.14]{BP2018}.  Part (3) is a slight refinement,  in that it identifies the ratio with the cuspidal normalization of the $L$-function.
\end{rem}

As in Section \ref{subsec:axiomatic mu},  in order to discuss $\mu$-invariants, we need to specialize to a particular weight.
Fix an integer $k \ge 2$, let $k:\Lambda_\m \to \sO_\m$ be the weight-$k$ specialization map, and suppose that $U_p-1$ generates $\Im$.  This constitutes a fixed-weight setup,  as in Definition \ref{defn:fixed weight}.  Let $r_k$, $\sO_{f_{k,i}}$,  $\varpi_{f_{k,i}}$, and $L^+_p(f_{k,i})$ be as in Remark \ref{rem:fixed weight} for this setup. 

We write $L^+_p(\m,\omega^j)$ for the projection of $L^+_p(\m)$ to the $\omega^j$-component of $\Tm \lb \Zpx \rb$ and likewise write $L^+_p(\m,\omega^j)^0_e$ for the projection of $L^+_p(\m)^0_e$ to the $\omega^j$-component of $\Tmo \lb \Zpx \rb$.
The following theorem analyzes the $\omega^0$-components of these $p$-adic $L$-functions. Parts (1) and (2) in the theorem are \cite[Theorem 3.21]{BP2018},  with the slight refinement that part (1) identifies the unit $U$ of \emph{loc.~cit.}~with the cuspidal normalization of the $p$-adic $L$-function.

\begin{thm}
\label{thm: U_p-1 gens}
Suppose that $U_p-1$ generates $\mathcal{I}_\m$ and let $e=(U_p-1)\zinf$. Then
\begin{enumerate}
\item 
\label{part:U_p-1 gens 1}
$L^+_p(\m,\omega^0)= L^+_p(\m,\omega^0)^{0}_{e} \cdot (U_p-1)$ and $L^+_p(\m,\omega^0)^{0}_{e} \in (\bT_\m^0\lb u \rb)^\times$.
\item 
\label{part:U_p-1 gens 2}
For every weight $k$ and index $i$,  the $\mu$- and $\lambda$-invariants of $f_{k,i}$ are $$
\mu(L^+_p(f_{k,i},\omega^0)) = \mathrm{val}_{\varpi_{f_{k,i}}}(a_p(f_{k,i})-1) \mbox{ and }
\lambda(L^+_p(f_{k,i},\omega^0)) = 0.
$$
\item 
\label{part:U_p-1 gens 3}
For every weight $k$,  the sum of the $\mu$-invariants is given by
\[
\sum_{i=1}^{r_k} \mu(L^+_p(f_{k,i},\omega^0)) = \mathrm{val}_\varpi(L_p(\psi_\m^{-1},k)).
\]
\item
\label{part:U_p-1 gens 4}
For every integer $M$, there is a weight $k$ and index $i$ such that 
\[\mu(L_p^+(f_{k,i},\omega^0))>M.\]
\end{enumerate}
\end{thm}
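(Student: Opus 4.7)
The plan is to handle the four parts in order, treating part~\eqref{part:U_p-1 gens 1} as the only substantive step; the remaining three reduce to applications of the axiomatic machinery of Section~\ref{sec:Axiomatics} together with the identification of $\mathrm{Eis}_\m(T_0)$ with the Kubota--Leopoldt series recalled in \S\ref{subsec:primitive setup}.

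For part~\eqref{part:U_p-1 gens 1}, the factorization $L^+_p(\m,\omega^0) = L^+_p(\m,\omega^0)^0_{e}\cdot(U_p-1)$ falls out of Theorem~\ref{thm:BP case}\eqref{part:principal} with $t = U_p-1$ after passing to the $\omega^0$-branch. The more delicate assertion is that $L^+_p(\m,\omega^0)^0_{e}$ is a unit. I would deduce this from Lemma~\ref{lem: lam(c) generates}\eqref{part:unit}, whose hypothesis requires a $\Tm$-algebra homomorphism $c\colon \Tm\lb u\rb \to \Tm$ such that $L^+_p(\m,\omega^0,c)$ generates $\I_\m = (U_p-1)$. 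The natural candidate for $c$ is a cyclotomic specialization at an arithmetic character on the $\omega^0$-branch where the interpolation formula for the Mazur--Kitigawa symbol becomes explicit. Unwinding the definition of $\cL^+_p(\m)$ from Section~\ref{sec:pL} and combining it with the Mazur--Tate--Teitelbaum interpolation property should realize $L^+_p(\m,\omega^0,c)$ as $(U_p-1)$ times a unit; this is the analogue, in our axiomatic language, of the computation underlying \cite[Theorem~3.21]{BP2018}.

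For parts~\eqref{part:U_p-1 gens 2} and~\eqref{part:U_p-1 gens 3}, I would apply Lemma~\ref{lem:mu_add} with the fixed-weight data consisting of the weight-$k$ specialization $k\colon \Lambda_\m\to \sO_\m$, the generator $t = U_p-1$, and the specialization $c$ from part~\eqref{part:U_p-1 gens 1}. Under the composition $\Tm \to \Tmvo{k} \to \sO_{f_{k,i}}$, the element $U_p-1$ maps to $a_p(f_{k,i})-1$, so the identity $\mu(\al_{w,i}(\omega^j)) = \mathrm{val}_{\varpi_{w,i}}(X_{w,i})$ established in the proof of Lemma~\ref{lem:mu_add} yields the $\mu$-formula of part~\eqref{part:U_p-1 gens 2}, while the factorization $L_{w,i}(\omega^j) = X_{w,i}U_i$ with $U_i$ a unit produces $\lambda = 0$. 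For part~\eqref{part:U_p-1 gens 3}, Lemma~\ref{lem:structure of T} gives $F_w(0)A = w(\mathrm{Eis}_\m(T_0))A = L_p(\psi_\m^{-1},k)A$, and the sum formula follows from Lemma~\ref{lem:mu_add}\eqref{part:mu_add1}.

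For part~\eqref{part:U_p-1 gens 4}, I would combine part~\eqref{part:U_p-1 gens 3} with Lemma~\ref{lem:mu_add}\eqref{part:mu_add2}, noting that $r_k$ is bounded above by $\rk\Tmo$ uniformly in $k$ since $F_w(X)$ has degree $\rk\Tmo$ and thus at most that many irreducible factors. It then suffices to exhibit integer weights $k$ at which $\mathrm{val}_\varpi(L_p(\psi_\m^{-1},k))$ is arbitrarily large. The (Good Eisen) hypothesis on $\m$ provides at least one weight $k_0$ for which $L_p(\psi_\m^{-1},k_0)\equiv 0 \pmod{\varpi}$; by Ferrero--Washington the Kubota--Leopoldt series $L_p(\psi_\m^{-1},\kappa)$ has vanishing $\mu$-invariant, so any such congruence must come from an honest zero $T_0$ of its Weierstrass factor. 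Approaching $T_0$ through integer weights $k$ with $k \equiv k_0 \pmod{(p-1)p^n}$ then forces $\mathrm{val}_\varpi(L_p(\psi_\m^{-1},k)) \geq n$, and $n$ can be taken as large as desired. The main obstacle is thus part~\eqref{part:U_p-1 gens 1}: producing a specialization $c$ whose value on the two-variable $L$-symbol is visibly a generator of $\I_\m$ is the sole place where the explicit geometry of the Mazur--Kitigawa construction must be invoked rather than being subsumed into a formal consequence of the axiomatics.
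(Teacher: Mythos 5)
Your proposal follows the paper's proof essentially step for step: part (1) via Lemma \ref{lem: lam(c) generates} applied to a specialization at which the interpolation formula exhibits the value as $(U_p-1)$ times a unit (the paper takes $c$ to be evaluation at the trivial character and cites the argument of \cite[Theorem 3.15]{BP2018} for exactly the computation you describe as ``the sole place where the explicit geometry must be invoked''), parts (2)--(3) via Lemma \ref{lem:mu_add}\eqref{part:mu_add1} together with $F_w(0)\sO_\m = w(\mathrm{Eis}_\m(T_0))\sO_\m = L_p(\psi_\m^{-1},k)\sO_\m$, and part (4) via Lemma \ref{lem:mu_add}\eqref{part:mu_add2} and zeros of the Kubota--Leopoldt series.

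One small slip in part (4): the congruence $k \equiv k_0 \pmod{(p-1)p^n}$ only yields $L_p(\psi_\m^{-1},k) \equiv L_p(\psi_\m^{-1},k_0) \pmod{p^{n+1}}$, hence $\mathrm{val}_\varpi(L_p(\psi_\m^{-1},k)) \ge \min(n+1,\mathrm{val}_\varpi(L_p(\psi_\m^{-1},k_0)))$, which is capped by the fixed finite quantity $\mathrm{val}_\varpi(L_p(\psi_\m^{-1},k_0))$ unless $k_0$ is itself an exact zero. To make the valuation unbounded you must take integer weights converging to the zero $T_0$ of the Weierstrass factor itself (which is what the paper does when it says ``choose an integer $k$ close enough to a zero''), not weights congruent to a weight $k_0$ at which the value is merely divisible by $\varpi$. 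Your use of Ferrero--Washington to guarantee that the non-unit $L_p(\psi_\m^{-1},\kappa)$ has an honest zero is a nice touch that the paper leaves implicit; with the approach-the-zero correction the argument goes through exactly as in the paper.
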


\begin{proof}
Let $\mathbf{1}:\Lambda_\m \to \sO_\m$ be evaluation at the trivial character. The argument of \cite[Theorem 3.15]{BP2018} shows that, if $\bT_\m$ is Gorenstein, then 
$$
L^+_p(\m,\mathbf{1}) =L^+_p(\m,\omega^0,\mathbf{1}) = (U_p-1) v
$$
for $v \in (\Tm)^\times$.  Thus part \eqref{part:U_p-1 gens 1} follows from Lemma \ref{lem: lam(c) generates} and immediately implies \eqref{part:U_p-1 gens 2},  whereas part \eqref{part:U_p-1 gens 3}  follows from Lemma \ref{lem:mu_add}\eqref{part:mu_add1}.  Finally let $M>0$ be an integer, and choose an integer $k$ close enough to a zero of $L_p(\psi_\m^{-1},\kappa)$ that 
\[
\mathrm{val}_\varpi(L_p(\psi_\m^{-1},k)))>M \mathrm{rank}_\Lambda(\Tmo).
\]
Since $\mathrm{rank}_\Lambda(\Tmo) \ge r_k$, part \eqref{part:U_p-1 gens 4} follows from Lemma \ref{lem:mu_add}\eqref{part:mu_add2}.
\end{proof}

\subsection{Conjectures} 
Theorem \ref{thm:BP case}(3) implies that, when $\Im$ is principal,  the $p$-adic $L$-function $L_p^+(\m)$ is divisible by a generator of $\Im$. This implies a lower bound on $\mu$-invariants of forms in the Hida family.  Based on the philosophy that $\mu$-invariants should be ``as small as possible'',  it is conjectured in \cite[Conjecture 3.16]{BP2018} that this lower bound is an equality.  To state the conjecture more precisely, we require some notation. For a height-one prime $\p \subset \bT_\m^0$,  let $\sO_\p$ be the normalization of $\Tmo/\p$ (which is a DVR) and let $\varpi_\p$ denote a uniformizer in $\sO_\p$. 
Let $L^+_p(\p)^{0}_e, L^+_p(\p)\in  \sO_\p \lb \Zpx \rb$ denote the images of $L^+_p(\m)^{0}_e$ and $L^+_p(\m)$ (supposing they exist). If $\mathcal{I}_\m$ is generated by $\mathfrak{t}$, let $\mathfrak{t}_\p$ denote the image of $\mathfrak{t}$ in $\sO_\p$. 
\begin{conj}[Bella\"{i}che-Pollack]
\label{conj:BP}
Suppose that $\mathcal{I}_\m$ is generated by $\mathfrak{t}$. Then, for every height-one prime $\p \subset \bT_\m^0$,  there is an equality $\mu(L^+_p(\p,\omega^j))=\mathrm{val}_{\varpi_\p}(\mathfrak{t}_\p)$ for each {\rob even} $j$.
\end{conj}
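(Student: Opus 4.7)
The natural starting point is the factorization from Theorem~\ref{thm:BP case}\eqref{part:principal}: in $\Im\lb \Zpx \rb$ one has
\[
L^+_p(\m) = L^+_p(\m)^{0}_{\mathfrak{t}\zinf} \cdot \mathfrak{t}.
\]
Projecting to the height-one prime $\p$ and to the $\omega^j$-branch gives
\[
L^+_p(\p,\omega^j) = L^+_p(\p,\omega^j)^{0}_{\mathfrak{t}\zinf} \cdot \mathfrak{t}_\p,
\]
so
\[
\mu(L^+_p(\p,\omega^j)) = \mu\bigl(L^+_p(\p,\omega^j)^{0}_{\mathfrak{t}\zinf}\bigr) + \mathrm{val}_{\varpi_\p}(\mathfrak{t}_\p).
\]
The inequality $\mu(L^+_p(\p,\omega^j)) \ge \mathrm{val}_{\varpi_\p}(\mathfrak{t}_\p)$ is thus automatic, and the conjecture is equivalent to showing that the cuspidal normalization has vanishing $\mu$-invariant on each even branch, i.e.\ $\mu(L^+_p(\p,\omega^j)^{0}_{\mathfrak{t}\zinf}) = 0$.

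\textbf{Plan of attack.} Following the blueprint of Theorem~\ref{thm: U_p-1 gens}, the cleanest route is to exhibit a $\Lam_\m$-algebra homomorphism $c : \Tm\lb u\rb \to \Tm$ whose value $L^+_p(\m, \omega^j, c)$ is a generator of $\Im$.  Lemma~\ref{lem: lam(c) generates} would then force $L^+_p(\m, \omega^j)^{0}_{\mathfrak{t}\zinf}$ to lie in $(\Tmo\lb u \rb)^\times$, yielding $\mu = 0$ simultaneously at every height-one prime $\p$.  Concretely, such a $c$ corresponds to evaluation at a classical weight $k$ and a special value of the cyclotomic variable; interpolation expresses $L^+_p(\m, \omega^j, c)$ in terms of Kubota--Leopoldt values of the branch of $L_p(\psi_\m^{-1},\kappa)$ (and, on the modular side, in terms of values of the family $L^+_p(\m,\omega^j)$), and one would want to arrange, for some choice of $k$ and cyclotomic point, that these values land in $\Im \setminus \Im^2$.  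The parity hypothesis that $j$ be even is naturally built in, since it is precisely for even characters that the pairing between $L^+_p$ and the choice of generator $\zinf$ is non-degenerate at the target specializations.

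\textbf{The obstacle.} In the setting of Theorem~\ref{thm: U_p-1 gens}, this strategy succeeds because of the fortunate coincidence that $U_p - 1$ itself generates $\Im$: one tests against $c = \mathbf{1}$ (evaluation at the trivial character) and recognizes the Kubota--Leopoldt value as $U_p - 1$ up to a unit. When $\Im$ requires a more subtle generator, identifying an explicit $c$ whose specialization hits a generator of $\Im$ appears to require nontrivial input from Hecke-algebra deformation theory in the style of \cite{Deo, Wake-JEMS, PG3}, producing explicit Hecke operators that generate $\Im$ and matching them with values of modular symbols. An alternative route would be to bypass the specialization argument entirely by proving an algebraic Iwasawa main conjecture for the cuspidal lattice that provides the complementary upper bound $\mu(L^+_p(\p,\omega^j)^{0}_{\mathfrak{t}\zinf}) \le 0$ directly. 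Either direction requires inputs well beyond the commutative algebra of Section~\ref{sec:Axiomatics}, which is the principal reason the statement remains conjectural outside the $U_p - 1$ case.
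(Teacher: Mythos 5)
The statement you were asked to prove is a \emph{conjecture} --- Conjecture~\ref{conj:BP}, attributed to Bella\"iche--Pollack and going back to \cite[Conjecture 3.16]{BP2018} --- and the paper does not prove it; you correctly refrain from claiming a proof. Your reduction matches the paper's own analysis: projecting the factorization $L^+_p(\m)= L^+_p(\m)^{0}_{\mathfrak{t}\zinf}\cdot\mathfrak{t}$ of Theorem~\ref{thm:BP case}\eqref{part:principal} to $\p$ and the $\omega^j$-branch reproduces Lemma~\ref{lem: mu ineguality}, and the resulting equivalence with $\mu\bigl(L^+_p(\p,\omega^j)^{0}_{\mathfrak{t}\zinf}\bigr)=0$ is precisely Proposition~\ref{prop: conjs are equiv}, which shows Conjecture~\ref{conj:BP} is equivalent (when $\Im$ is principal) to Conjecture~\ref{conj:mu conj}. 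You also correctly identify the only case the paper resolves --- $U_p-1$ generating $\Im$ and $j=0$, via Lemma~\ref{lem: lam(c) generates} applied with $c=\mathbf{1}$ --- and your diagnosis that the general case requires input beyond the commutative algebra of Section~\ref{sec:Axiomatics} (either an explicit deformation-theoretic Hecke generator of $\Im$ realized as a specialization of $L^+_p$, or an algebraic main conjecture supplying the complementary upper bound) agrees with the paper's own stance, which offers only numerical evidence in the last subsection of \S\ref{sec:mazur}. There is no flaw in your reasoning; the only thing to flag is that what you have produced is a correct account of why the statement is conjectural, not a proof of it --- and that is the right outcome, since the paper leaves it open too.
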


Since this conjecture uses a generator of $\Im$ in the statement,  there is no immediate generalization when $\Im$ is not principal.  We make the following conjecture,  which has the same spirit that the $p$-adic $L$-function is not divisible by anything more than is implied by Theorem \ref{thm:BP case}. 

\begin{conj}\label{conj:mu conj}
~\begin{enumerate}
\item Suppose that $\bT_\m$ is Gorenstein. For each {\rob even} $j$, there is an equality $$\mathrm{content}_{\bT_\m}(L^+_p(\m,\omega^j)) =\mathcal{I}_\m.$$
\item Suppose that $\Tmo$ is Gorenstein and let $e$ be a generator of $H^1_\Lambda(X_1(N))^-_\m$ as a $\Tmo$-module.
Then, for each {\rob even} $j$,  $L^+_p(\m,\omega^j)^{0}_e$ has unit content.
\end{enumerate}
\end{conj}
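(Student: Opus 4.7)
The plan is to attack the two parts separately, noting that they interact via the factorization of Lemma \ref{lem: L when I principal} when $\Im$ is principal.

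For part (2), unit content of $L^+_p(\m,\omega^j)^0_e$ is equivalent to the existence of at least one $\Lambda_\m$-algebra specialization $c$ at which the value is a unit. A natural source of such specializations comes from classical height-one primes $\p \subset \Tmo$: the image of $L^+_p(\m,\omega^j)^0_e$ in $\sO_\p \lb u \rb$ agrees, up to a unit reflecting the choice of generator $e$, with the classical one-variable $p$-adic $L$-function $L_p^+(f_\p,\omega^j)$. It therefore suffices to exhibit a single cuspidal specialization at which this classical $L$-function is itself a unit, i.e., has both $\mu$- and $\lambda$-invariants equal to zero. For even $j$ this should follow from the one-variable main conjecture (Kato, Skinner--Urban) applied at a carefully chosen classical form where the relevant Selmer group vanishes, which is a generic condition as one varies over the Hida family.

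For part (1), the inclusion $\mathrm{content}(L^+_p(\m,\omega^j)) \subseteq \Im$ is Theorem \ref{thm:BP case}\eqref{part:Tgoren} applied in the $\omega^j$-component. The reverse inclusion splits into two cases. When $\Im$ is principal, Lemma \ref{lem: both goren} ensures $\Tmo$ is also Gorenstein, and the factorization $L^+_p(\m,\omega^j) = L^+_p(\m,\omega^j)^0_{t\zinf} \cdot t$ of Theorem \ref{thm:BP case}\eqref{part:principal}, combined with part (2), gives equality. When $\Im$ is not principal (so $\Tmo$ is not Gorenstein), one must, for each generator of $\Im$, produce a $\Lambda_\m$-algebra specialization of $L^+_p(\m,\omega^j)$ whose value realizes that generator in the content ideal.

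The main obstacle is precisely this non-principal case of (1). Here there is no structural decomposition of $L^+_p(\m,\omega^j)$ into ``Eisenstein'' and ``cuspidal'' factors; the interaction is encoded only through the module structure of the dualizing module $H^1_\Lambda(Y_1(N))^-_\m$ over the non-Gorenstein ring $\Tmo$. The natural strategy is to invoke the algebraic side of Iwasawa theory: a residually reducible main conjecture in the spirit of Skinner--Urban and of the second author's work with Wang-Erickson should identify a suitable Fitting ideal of a modular Selmer group with the ideal generated by $L^+_p(\m,\omega^j)$, while a separate algebraic argument should show that this Fitting ideal has content exactly $\Im$. Implementing this requires tracking ideals rather than merely characteristic ideals—the refined algebraic Iwasawa theory for both cuspidal and modular lattices that the introduction describes as ongoing work—and is the essential difficulty in proving the full conjecture.
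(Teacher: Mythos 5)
This statement is a \emph{conjecture}: the paper offers no proof of it, only the containment $\mathrm{content}_{\bT_\m}(L^+_p(\m,\omega^j)) \subseteq \I_\m$ (Theorem \ref{thm:BP case}\eqref{part:Tgoren}), the equivalence of (1), (2), and Conjecture \ref{conj:BP} when $\I_\m$ is principal (Proposition \ref{prop: conjs are equiv}), the single case $j=0$ when $U_p-1$ generates $\I_\m$ (via the explicit evaluation $L^+_p(\m,\mathbf{1})=(U_p-1)v$ feeding into Lemma \ref{lem: lam(c) generates}), and numerical evidence. So any complete ``proof'' should be viewed with suspicion, and yours does contain genuine gaps beyond the one you flag.

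The main problem is in your argument for part (2), which is essentially circular. You reduce unit content to finding one classical specialization $\p$ at which the cuspidal-normalized $L$-function is a unit. But by Lemma \ref{lem: equivalences of unit content}, unit content of $L^+_p(\m,\omega^j)^0_e$ is \emph{equivalent} to $\mu(L^+_p(\p,\omega^j)^0_e)=0$ at a single height-one prime, and by Lemma \ref{lem: mu ineguality} (in the principal case) this is in turn equivalent to the classical $\mu$-invariant being exactly $\mathrm{val}_{\varpi_\p}(\mathfrak{t}_\p)$ --- which is precisely Conjecture \ref{conj:BP} at that form. So ``exhibit one good specialization'' is not a reduction; it is a restatement. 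Moreover, the tools you invoke to produce such a specialization do not apply: Skinner--Urban's main conjecture results assume residual irreducibility, which fails here by construction ($\overline{\rho}=1\oplus\omega^{k_0-1}$), Kato gives only one divisibility, and ``the relevant Selmer group vanishes generically along the family'' is unsubstantiated --- indeed the whole point of the paper is that these $\mu$-invariants are \emph{positive} and blow up, so the analytic behavior is delicate and the correct normalization matters. Your treatment of the non-principal case of (1) is, as you say yourself, a research program (matching a Fitting ideal of a modular Selmer group with the content ideal) rather than an argument; the paper describes exactly this as work in progress. What the paper actually can prove --- the $j=0$ case under the $U_p-1$ hypothesis --- comes from an explicit computation of the value of the two-variable symbol at the trivial character, a mechanism your proposal does not use and which is not known to extend to $j\neq 0$.
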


This conjecture is a generalization of Conjecture \ref{conj:BP}, in that they are equivalent whenever the assumptions overlap:

\begin{prop}
\label{prop: conjs are equiv}
Suppose that $\mathcal{I}_\m$ is principal and let $j_0 \in \Z/(p-1)\Z$ with $j_0 \equiv 0 \pmod{2}$. Then the following are equivalent:
\begin{enumerate}
\item Conjecture \ref{conj:mu conj} (1) for $j=j_0$.
\item Conjecture \ref{conj:mu conj} (2) for $j=j_0$.
\item Conjecture \ref{conj:BP} for $j=j_0$.
\end{enumerate} 
Moreover, if $U_p-1$ generates $\mathcal{I}_\m$, then these conjectures are all true for $j=0$.
\end{prop}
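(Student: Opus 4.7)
The plan is to fix a generator $\mathfrak{t}$ of $\mathcal{I}_\m$, which by Lemma \ref{lem: both goren} makes both $\bT_\m$ and $\bT_\m^0$ Gorenstein (so the hypotheses of parts (1) and (2) of Conjecture \ref{conj:mu conj} are automatic) and, by Theorem \ref{thm:BP case}\eqref{part:principal}, also yields the factorization
\[
L^+_p(\m, \omega^{j_0}) = L^+_p(\m, \omega^{j_0})^{0}_{\mathfrak{t}\zinf} \cdot \mathfrak{t}
\]
inside $\mathcal{I}_\m \lb \Zpx \rb$, with $e := \mathfrak{t}\zinf$ a generator of $H^1_\Lambda(X_1(N))^-_\m$ over $\bT_\m^0$. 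The content of $L^+_p(\m, \omega^{j_0})^{0}_e$ is independent of the choice of generator $e$ (replacing $e$ by $ue$ for $u\in(\bT_\m^0)^\times$ rescales $L^0$ by $u^{-1}$), so Conjecture \ref{conj:mu conj}(2) is well-posed, and this particular $e$ may be used.

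For the equivalence (1) $\iff$ (2): since $\mathcal{I}_\m$ is free of rank one over $\bT_\m^0$ on $\mathfrak{t}$, writing $L^+_p(\m, \omega^{j_0})^0_{\mathfrak{t}\zinf} = \sum b_i u^i$ with $b_i \in \bT_\m^0$ gives $L^+_p(\m, \omega^{j_0}) = \sum (b_i \mathfrak{t}) u^i$. Using that the $\bT_\m$-action on $\mathcal{I}_\m$ factors through $\bT_\m^0$, one obtains
\[
\mathrm{content}_{\bT_\m}\!\bigl(L^+_p(\m, \omega^{j_0})\bigr) = \mathrm{content}_{\bT_\m^0}\!\bigl(L^+_p(\m, \omega^{j_0})^0_{\mathfrak{t}\zinf}\bigr) \cdot \mathfrak{t}
\]
as $\bT_\m^0$-submodules of $\mathcal{I}_\m$. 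Conjecture \ref{conj:mu conj}(1) says the left-hand side equals $\mathcal{I}_\m = \bT_\m^0\cdot \mathfrak{t}$, which is visibly equivalent to the assertion that the $\bT_\m^0$-content on the right is the unit ideal, i.e., to Conjecture \ref{conj:mu conj}(2).

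For (2) $\iff$ (3): specializing the factorization at a height-one prime $\p \subset \bT_\m^0$ gives $L^+_p(\p, \omega^{j_0}) = L^+_p(\p, \omega^{j_0})^0 \cdot \mathfrak{t}_\p$, whence $\mu(L^+_p(\p, \omega^{j_0})) = \mathrm{val}_{\varpi_\p}(\mathfrak{t}_\p) + \mu(L^+_p(\p, \omega^{j_0})^0)$. So Conjecture \ref{conj:BP} for $j_0$ becomes the statement that $\mu(L^+_p(\p, \omega^{j_0})^0) = 0$ for every height-one prime $\p$. The implication (2) $\Rightarrow$ (3) is immediate, since a unit coefficient of $L^0_{\mathfrak{t}\zinf}$ remains a unit after any specialization $\bT_\m^0 \to \sO_\p$. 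The main obstacle is the converse. Here the key point is that $\bT_\m^0$ is local and each specialization $\bT_\m^0 \to \sO_\p$ is a \emph{local} ring homomorphism (the preimage of $(\varpi_\p)$ under $\bT_\m^0/\p \to \sO_\p$ is the unique maximal ideal of the local ring $\bT_\m^0/\p$). Thus if (2) fails, every $b_i$ lies in $\mathfrak{m}_{\bT_\m^0}$, hence maps into $(\varpi_\p)$ for every $\p$, forcing $\mu(L^+_p(\p, \omega^{j_0})^0) > 0$ at every $\p$ and contradicting (3).

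Finally, the ``moreover'' clause falls out at once: when $U_p - 1$ generates $\mathcal{I}_\m$, take $\mathfrak{t} = U_p - 1$. Theorem \ref{thm: U_p-1 gens}\eqref{part:U_p-1 gens 1} asserts that $L^+_p(\m, \omega^0)^0_{(U_p-1)\zinf}$ is a unit in $\bT_\m^0 \lb u \rb$, and therefore \emph{a fortiori} has unit content; this verifies Conjecture \ref{conj:mu conj}(2) for $j_0 = 0$, and the other two conjectures for $j_0=0$ then follow from the equivalences just established.
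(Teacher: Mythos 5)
Your proof is correct and takes essentially the same approach as the paper's: both reduce to the factorization $L^+_p(\m,\omega^{j_0}) = L^+_p(\m,\omega^{j_0})^{0}_{\mathfrak{t}\zinf}\cdot\mathfrak{t}$ from Theorem \ref{thm:BP case}(3), deduce (1) $\Leftrightarrow$ (2) by the multiplicativity of content, deduce (2) $\Leftrightarrow$ (3) from the fact that $\bT_\m^0$ is local so that unit content is detected on (any, equivalently all) specializations to height-one primes, and settle the final claim via Theorem \ref{thm: U_p-1 gens}. The only presentational difference is that the paper isolates the (2) $\Leftrightarrow$ (3) step into two small preparatory lemmas (Lemmas \ref{lem: equivalences of unit content} and \ref{lem: mu ineguality}), while you inline those arguments; you also add a useful sanity-check that the content of $L^+_p(\m,\omega^{j_0})^0_e$ is independent of the chosen generator $e$, which the paper leaves implicit.
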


To prove the proposition, we require some lemmas.
\begin{lem}
\label{lem: equivalences of unit content}
Suppose that $H^1_\Lambda(X_1(N))^-_\m$ is generated by an element $e$ as a $\bT^0_\m$-module. The following are equivalent:
\begin{enumerate}
\item $L^+_p(\m,\omega^j)^{0}_e$ has unit content.
\item for some height-one prime $\p \subset \bT_\m^0$,  the $\mu$-invariant $\mu(L^+_p(\p,\omega^j)^{0}_e)$ vanishes.
\item for all height-one primes $\p \subset \bT_\m^0$,  the $\mu$-invariant $\mu(L^+_p(\p,\omega^j)^{0}_e)$ vanishes.
\end{enumerate}
\end{lem}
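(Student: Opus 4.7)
The plan is to reduce all three statements to the single assertion that some coefficient of the power series $L^+_p(\m,\omega^j)^0_e \in \bT^0_\m \lb u \rb$ lies in $(\bT^0_\m)^\times$. This is precisely the definition of unit content for a power series over a local ring, so it is exactly statement (1). The key structural input needed for this reduction is that, for every height-one prime $\p \subset \bT^0_\m$, the natural composite $R := \bT^0_\m \twoheadrightarrow R/\p \hookrightarrow \sO_\p$ is a \emph{local} ring homomorphism, i.e., sends $\m_R$ into the maximal ideal $\mathfrak{m}_{\sO_\p}$.

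First I would verify this locality. Since $\sO_\p$ is a DVR by assumption and is integral over the one-dimensional Noetherian local domain $R/\p$, the standard fact that contractions of maximal ideals along integral extensions are maximal forces $\mathfrak{m}_{\sO_\p}$ to contract to the unique maximal ideal of $R/\p$, which pulls back to $\m_R$ in $R$. Consequently, an element $a \in R$ maps to a unit of $\sO_\p$ if and only if $a \notin \m_R$, i.e., if and only if $a \in R^\times$.

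Next I would apply this coefficient-by-coefficient. Writing $L^+_p(\m,\omega^j)^0_e = \sum_{i \geq 0} a_i u^i$, the image in $\sO_\p\lb u \rb$ is $L^+_p(\p,\omega^j)^0_e = \sum_{i \geq 0} \overline{a}_i u^i$, where $\overline{a}_i \in \sO_\p$ is the image of $a_i$. Since $\sO_\p$ is a DVR, the $\mu$-invariant of this image vanishes if and only if some $\overline{a}_i$ is a unit in $\sO_\p$, which by the locality established in the previous step happens if and only if some $a_i$ is a unit in $R$. Because this final condition does not refer to $\p$ at all, the equivalences (1) $\Leftrightarrow$ (2) $\Leftrightarrow$ (3) follow at once.

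The main, and essentially only, obstacle is the locality of the map $R \to \sO_\p$; once that is in hand, the equivalences are a bookkeeping exercise. That locality is automatic from integrality together with the assumption that $\sO_\p$ is a DVR (so that it has a unique maximal ideal). In particular, I do not expect to need the precise structure of the Hecke algebra or of the $L$-symbol beyond the fact that $L^+_p(\m,\omega^j)^0_e$ is an element of $R\lb u \rb$, via the identification of the $\omega^j$-component with $R\lb u \rb$ after choosing a generator of $1+p\Z_p$.
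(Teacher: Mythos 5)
Your argument is correct and is the same observation the paper makes, which dismisses the lemma with the one-line proof ``Clear since $\bT^0_\m$ is a local ring.'' You have simply spelled out the content of that remark: unit content over the local ring $\bT^0_\m$ means some coefficient is a unit, and since each map $\bT^0_\m \to \sO_\p$ is a local homomorphism (by locality of $\bT^0_\m$ and integrality of $\sO_\p$ over $\bT^0_\m/\p$), a coefficient is a unit in $\bT^0_\m$ if and only if its image is a unit in $\sO_\p$, for any and hence every $\p$.
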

\begin{proof}
Clear since $\bT^0_\m$ is a local ring.
\end{proof}

\begin{lem}
\label{lem: mu ineguality}
Suppose that $\mathcal{I}_\m$ is generated by $\mathfrak{t}$ and let $e=\mathfrak{t}\zinf$. Then for every height-one prime $\p \subset \Tmo$, there is an equality
\[
\mu(L^+_p(\p,\omega^j)) = \mu(L^+_p(\p,\omega^j)^{0}_e) + \mathrm{val}_{\varpi_\p}(\mathfrak{t}_\p).
\]
In particular, $\mu(L^+_p(\p,\omega^j)) \ge \mathrm{val}_{\varpi_\p}(\mathfrak{t}_\p)$, with equality if and only if $\mu(L^+_p(\p,\omega^j)^{0}_e)$ is zero.
\end{lem}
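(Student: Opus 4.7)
The plan is to use the identity from Theorem \ref{thm:BP case}\eqref{part:principal}, which under the assumption that $\Im$ is principal with generator $\mathfrak{t}$ gives
\[
L^+_p(\m) = L^+_p(\m)^{0}_{\mathfrak{t}\zinf}\cdot \mathfrak{t}
\]
as an equality in $\Im \lb \Zpx \rb$ (note that $e=\mathfrak{t}\zinf$ is a generator of $H^1_\Lambda(X_1(N))^-_{\m}$ by the same theorem). First I would project this equality to the $\omega^j$-component of $\bT_\m \lb \Zpx \rb$ to obtain $L^+_p(\m,\omega^j) = L^+_p(\m,\omega^j)^{0}_{e}\cdot \mathfrak{t}$ in $\Im \lb u \rb$, and then push it forward through the composite map $\Tm \onto \Tmo \onto \Tmo/\p \hookrightarrow \sO_\p$ to arrive at the equality
\[
L^+_p(\p,\omega^j) = L^+_p(\p,\omega^j)^{0}_{e}\cdot \mathfrak{t}_\p
\]
in the power series ring $\sO_\p \lb u \rb$ over the DVR $\sO_\p$.

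The next step is to translate this multiplicative equality into an additive one for $\mu$-invariants. Since $\mathfrak{t}_\p \in \sO_\p$ is a scalar, formula \eqref{eq:content multiplies} gives
\[
\mathrm{content}_{\sO_\p}(L^+_p(\p,\omega^j)) = \mathfrak{t}_\p \cdot \mathrm{content}_{\sO_\p}(L^+_p(\p,\omega^j)^{0}_{e}),
\]
so taking $\varpi_\p$-adic valuations of these (principal) ideals yields
\[
\mu(L^+_p(\p,\omega^j)) = \mathrm{val}_{\varpi_\p}(\mathfrak{t}_\p) + \mu(L^+_p(\p,\omega^j)^{0}_{e}),
\]
which is the desired identity. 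The inequality $\mu(L^+_p(\p,\omega^j)) \ge \mathrm{val}_{\varpi_\p}(\mathfrak{t}_\p)$ and the equality criterion then follow immediately from $\mu(L^+_p(\p,\omega^j)^{0}_{e}) \ge 0$.

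There is no real obstacle here; the only mild point of care is checking that the various projections commute with each other and with the identification of ideals via $e = \mathfrak{t}\zinf$, so that $L^+_p(\p,\omega^j)^{0}_{e}$ as defined by first specializing $e$ to $\p$ agrees with the image of $L^+_p(\m,\omega^j)^{0}_{e}$ under $\Tmo \lb u \rb \to \sO_\p \lb u \rb$. This is a straightforward naturality check since the factorization in Theorem \ref{thm:BP case}\eqref{part:principal} is canonical in $\Tm$ and both sides are being sent through the same $\Lambda_\m$-algebra maps.
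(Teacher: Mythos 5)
Your proof is correct and follows the same route as the paper, whose proof simply says ``Clear from Lemma \ref{lem: L when I principal}'': you instantiate that factorization via Theorem \ref{thm:BP case}\eqref{part:principal}, specialize to the $\omega^j$-branch and to $\sO_\p$, and use \eqref{eq:content multiplies} to convert the product into a sum of $\mu$-invariants. The naturality worry at the end is moot, since the paper defines $L^+_p(\p,\omega^j)^{0}_e$ precisely as the image of $L^+_p(\m,\omega^j)^{0}_e$ under $\Tmo\lb u\rb \to \sO_\p\lb u\rb$.
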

\begin{proof}
Clear from Lemma \ref{lem: L when I principal}.
\end{proof}

\begin{proof}[Proof of Proposition \ref{prop: conjs are equiv}]
First note that, since $\Im$ is principal, Lemma \ref{lem: both goren} implies that  both $\bT_\m$ and $\Tmo$ are Gorenstein.  Hence, the hypotheses of the three conjectures are satisfied, and we need to show that their conclusions are equivalent.

Let $\mathfrak{t}$ be a generator of $\mathcal{I}_\m$ and let $e=\mathfrak{t}\zinf$. 
Theorem \ref{thm:BP case}(3) implies $L^+_p(\m,\omega^{j_0})=\mathfrak{t} \cdot L^+_p(\m,\omega^{j_0})^{0}_e$, so by the multiplicative property \eqref{eq:content multiplies} of content, it follows that
\[
\mathrm{content}_{\bT_\m}(L^+_p(\m,\omega^{j_0})) = \mathfrak{t}\cdot\mathrm{content}_{\bT_\m}(L^+_p(\m,\omega^{j_0})^{0}_e) = \mathcal{I}_\m \cdot \mathrm{content}_{\bT_\m}(L^+_p(\m,\omega^{j_0})^{0}_e).
\]
This makes the equivalence of (1) and (2) clear.

Now assume (2),  so that $L^+_p(\m,\omega^{j_0})^{0}_e$ has unit content, and let $\p \subset \Tmo$ be a height-one prime.  Then $\mu(L^+_p(\p,\omega^{j_0})^{0}_e)=0$ by Lemma \ref{lem: equivalences of unit content}. It then follows from Lemma \ref{lem: mu ineguality} that $\mu(L^+_p(\m,\omega^{j_0}))=\mathrm{val}_{\varpi_\p}(\mathfrak{t}_\p)$, proving (3).

Now assume (3), so that $\mu(L^+_p(\m,\omega^{j_0}))=\mathrm{val}_{\varpi_\p}(\mathfrak{t}_\p)$ for all $\p$.  Then $\mu(L^+_p(\p,\omega^{j_0})^{0}_e)$ vanishes by Lemma \ref{lem: mu ineguality}, and so $L^+_p(\m,\omega^{j_0})^{0}_e$ has unit content by Lemma \ref{lem: equivalences of unit content}, proving (2).

For the last claim, if  $U_p-1$ generates $\mathcal{I}_\m$, then Theorem \ref{thm: U_p-1 gens} implies that $L^+_p(\m,\omega^0)^{0}_e$ is a unit and Conjecture \ref{conj:mu conj} (2) is clear for $j=0$.
\end{proof}

\section{Mazur's case}
\label{sec:mazur}

In this section,  we consider the case of tame level $\Gamma_0(N)$ for a prime $N$.  This is, in some sense, the opposite of the previous section:\ whereas in Section \ref{sec:primitive} we considered forms with \emph{primitive} tame character, in this section we consider \emph{trivial} tame character.
The main difference is that the constant term of the relevant Eisenstein series are multiplied by an Euler factor at $N$ and congruences can occur because $p$ divides that Euler factor.  We focus on these kinds of congruences in the most interesting case:\ when $N \equiv 1\pmod{p}$.  This includes the case considered by Mazur in his original article \cite{mazur1978} on the Eisenstein ideal, and so we refer to this setup as the ``Mazur's case''.  We make use of recent advances \cite{PG3,Wake-JEMS,PG5,Deo,lecouturier2016,lecouturier2021}  about the Gorenstein property for the relevant Hecke algebras. This allows us to replace the Gorenstein assumptions in the results of Section \ref{sec:primitive} with some numerical criteria.

Another difference between this setup and the primitive case considered in Section \ref{sec:primitive} is in the way we specify a unique Eisenstein family.  In tame level $\Gamma_0(N)$, the space of ordinary Eisenstein families has rank 2.  A generic basis of the space is given by $U_N$-eigenforms:\ one where $U_N$ acts by 1 and one where $U_N$ acts by $N^{k-1}$.  Since $N \equiv 1 \pmod{p}$, these two families are congruent and cannot be separated by localizing a maximal ideal of the Hecke algebra.
To resolve this issue,  following an idea of Ohta \cite{ohta2014} as in \cite{PG5}, we replace the $U_N$ operator in the Hecke algebra by the Atkin-Lehner involution $w_N$.  The two $w_N$-eigenvector ordinary Eisenstein families are not congruent, so there is a unique Eisenstein family after localizing at a maximal ideal in this new Hecke algebra.  With our assumptions,  only the family with $w_N=-1$ has constant term divisible by $p$, so we focus solely on that family.

The following subsection summarizes the results of Appendix \ref{appendix:Hida} where it is verified that Hida theory works as expected for these modified Hecke algebras.  After that, we verify that there is a unique ordinary Eisenstein family where $w_N$ acts by $-1$ and we compute the constant term of its $q$-expansion.  We explain how to use Ohta's results on $\Lambda$-adic Eichler-Shimura for tame level $\Gamma_1(N)$ to prove the same results for tame level $\Gamma_0(N)$. We then establish various numerical criteria that guarantee that our Hecke algebras are Gorenstein and further ones that guarantee that $U_p-1$ generates the Eisenstein ideal.  With these results in hand, we apply the axiomatic setup of \S \ref{sec:Axiomatics} to deduce our main analytic results.

\subsection{Hida theory with Atkin-Lehner operators} 
Fix an even integer $k_0$ with $0<k_0<p-1$.  For integers $r\ge 0, k\ge 2$ with $k \equiv k_0 \bmod{p-1}$, let $\fH_{k,Np^r}$ denote the subalgebra of $\End_{\Z_p}(M_k(\Gamma_0(N) \cap \Gamma_1(p^r),\Z_p))$ generated by operators $T_q$ for primes $q \nmid Np$, together with $w_N$ and $U_p$. 
Let $\h_{k,Np^r}$ denote the image of $\fH_{k,Np^r}$ in $\End_{\Z_p}(S_k(\Gamma_0(N) \cap \Gamma_1(p^r),\Z_p))$.  Let $\fH_k^\ord$ denote the inverse limit over $r$ of the ordinary part $\fH_{k,Np^r}^\ord$ of $\fH_{k,Np^r}$, and similarly for $\h_k^\ord$. The main result proven in Appendix \ref{appendix:Hida} is that these algebras satisfy the main theorems of Hida theory, just as for the Hecke algebras with $U_N$-operators:
\begin{itemize}
\item (independence of weight) $\fH_k^\ord$ and $\h_k^\ord$ are independent of $k$ (and depend only on $k_0$), and so can be denoted simply by $\fH^\ord$ and $\h^\ord$,
\item (freeness over $\Lambda$) the algebras $\fH^\ord$ and $\h^\ord$ are free $\Lambda$-modules of finite rank,
\item (duality) the pairing $(f,T) \mapsto a_1(Tf)$ is a perfect duality between $\fH^\ord$ and $\Lambda$-adic modular forms (and similarly for $\h^\ord$ and cuspforms),
\item (control) the natural maps $\fH^\ord \to \fH^\ord_{k,Np^r}$ induce isomorphisms $\fH^\ord/\omega_{r,k} \to \fH^\ord_{k,Np^r}$ for a particular $\omega_{r,k} \in \Lambda$ (and similarly for $\h^\ord$).
\end{itemize}

\subsection{Eisenstein series}
\label{sec:eis}
Let $\m \subseteq \fH^{\ord}$ denote the maximal ideal corresponding to the residual representation $1 \oplus \omega^{k_0-1}$ and which contains $w_N+1$.   Write $\Tm$ for the completion of $\fH^{\ord}$ at $\m$ and $\Tmo$ for the completion of $\h^{\ord}$ at $\m$.  Then both $\Tm$ and $\Tmo$ are modules over $\Lambda = \Z_p\lb T\rb$ the Iwasawa algebra.  

Let $E^{\ord}$ denote the family of Eisenstein series whose specialization to a weight $k \equiv k_0 \pmod{p-1}$ is $E^{\ord}_k$, the unique ordinary Eisenstein series of weight $k$ and level $p$ whose constant term is $-(1-p^{k-1}) \frac{B_k}{2k}$.
We wish to promote this family to an eigenfamily of level $Np$ where $w_N$ acts by $-1$.  To this end, note that it is easy to compute the action of $w_N$ on $E^{\ord}_k$ thought of as a form of level $Np$ as this form is old at $N$.  Indeed, for any form $f$ of level $Np$, we have $w_N f = N^{1-k/2} \cdot f \big|_k \left( \begin{smallmatrix}  Na &b  \\ Np & Nd \end{smallmatrix} \right)$ where this matrix has determinant $N$ and thus
\begin{align*}
w_N E^{\ord}_k(z) 
&= N^{1-k/2} \cdot \left(E^{\ord}_k \big|_k \left( \begin{smallmatrix}  Na &b  \\ Np & Nd \end{smallmatrix} \right)\right)(z) \\
&= N^{1-k/2} \cdot \left(E^{\ord}_k \big|_k \left( \begin{smallmatrix} a & b \\ p & Nd \end{smallmatrix} \right)\left( \begin{smallmatrix} N & 0 \\ 0  & 1 \end{smallmatrix} \right)\right) (z) \\
&= N^{1-k/2} \cdot \left(E^{\ord}_k \big|_k \left( \begin{smallmatrix} N & 0 \\ 0  & 1 \end{smallmatrix} \right) \right)(z) \\
&= N^{k/2} \cdot E^{\ord}_k(Nz).
\end{align*}

With this formula in hand, an easy computation shows that 
\[
\E^\pm_k(q) := E^{\ord}_k(q) \pm N^{k/2}E^{\ord}_k(q^N)
\]
is a $w_N$-eigenform with eigenvalue $\pm1$.  In particular, there is a unique Eisenstein series in $M_k(\Gamma_0(Np))$ where $w_N$ acts with sign $\pm1$.  Further note that
the constant term of $\E^\pm_k(q)$ is given by $-(1 \pm N^{k/2})(1-p^{k-1}) \frac{B_k}{2k}$.

 Let $\E^-$ denote the family of Eisenstein series that in weight $k \equiv k_0 \pmod{p-1}$ specializes to $\E^-_k$.  Let $\Eis:\Tm \to \Lambda$ be the homomorphism corresponding to $\E^-$ and let $\Im$ denote the kernel of this map.  As $\E^-$ is the unique Eisenstein family parametrized by $\Tm$,  the ideal $\Ann_{\Tm}(\Im)$ is free of rank one as a $\Lambda$-module.  It is generated by the Hecke operator $T_0$ determined by $a_1(T_0f)=a_0(f)$, so that $\Tmo=\Tm/T_0\Tm$.
 
Taken together, all this implies that $(\sO,A,R,\E, r_0)=(\Z_p,\Lambda,\Tm,\mathrm{Eis}, T_0)$ is a single-Eisenstein Hecke algebra setup. Further, the element $\xi = \Eis(T_0)$ corresponds to the constant term of $\E^-$ which is $\zeta_{p,k_0} \cdot (1-\langle N \rangle^{1/2})$ where $\zeta_{p,k_0} \in \Lambda$ is the $\omega^{k_0}$-branch of the $p$-adic $\zeta$-function and $\langle N \rangle \in \Lambda$ is the element which specializes to $N^k$ in weight $k$.

\subsection{$\Lambda$-adic Eichler-Shimura for tame level $\Gamma_0(N)$}
Consider the $\Lambda$-adic \'etale cohomology groups $H^1_\Lambda(Y_0(N))$ and $H^1_\Lambda(X_0(N))$ with tame level $\Gamma_0(N)$, defined as
\[
H^1_\Lambda(Y_0(N)) = \varprojlim H^1(Y(\Gamma_0(N) \cap \Gamma_1(p^r)),\Z_p)^{\ord, (k_0)},
\]
and similarly for $X_0(N)$, where the superscript ${(k_0)}$ means the $\omega^{k_0}$-eigenspace for the diamond-operator action of $(\Z/p\Z)^\times$.  In this section, we use Ohta's results on the structure of $H^1_\Lambda(Y_1(N))$ and $H^1_\Lambda(X_1(N))$ to deduce analogous results for $H^1_\Lambda(Y_0(N))$ and $H^1_\Lambda(X_0(N))$.
The main input is the following result about the structure of $H^1_\Lambda(Y_1(N))$ as a $\Z_p[\Delta]$-module, where $\Delta=\Gamma_0(N)/\Gamma_1(N) \cong (\Z/N\Z)^\times$ is the group of diamond operators of level $N$.

\begin{lem}
\label{lem:Delta freeness}
The $\Lambda$-adic cohomology group $H^1_\Lambda(Y_1(N))$ is a projective $\Z_p[\Delta]$-module, and the natural maps
\[
H^1_\Lambda(Y_1(N))_\Delta \to H^1_\Lambda(Y_0(N)) \to H^1_\Lambda(Y_1(N))^\Delta
\]
are isomorphisms.
\end{lem}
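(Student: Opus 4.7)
The plan is to combine Ohta's structural theorem for $H^1_\Lambda(Y_1(N))$ with the Lyndon-Hochschild-Serre spectral sequence for the group extension $1 \to \Gamma_1(Np^r) \to \Gamma_0(N) \cap \Gamma_1(p^r) \to \Delta \to 1$.

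First, I would establish the right-hand isomorphism. For $p^r \geq 4$ the groups involved are torsion-free, so the cohomology of the corresponding open modular curves matches group cohomology, and the Hochschild-Serre spectral sequence reads
\[
E_2^{p,q}=H^p(\Delta,H^q(Y_1(Np^r),\Z_p))\Rightarrow H^{p+q}(Y(\Gamma_0(N)\cap\Gamma_1(p^r)),\Z_p).
\]
Since $Y_1(Np^r)$ is an affine curve, $E_2^{p,q}$ vanishes for $q \geq 2$. The module $H^0(Y_1(Np^r),\Z_p) = \Z_p$ carries the trivial diamond-operator action, so the terms $E_2^{p,0}$ lie entirely in the $\omega^0$-eigenspace; since $k_0 \not\equiv 0 \pmod{p-1}$, they vanish after projection to the $\omega^{k_0}$-eigenspace. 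The spectral sequence therefore collapses in this eigenspace to an edge isomorphism $H^1(Y(\Gamma_0(N)\cap\Gamma_1(p^r)),\Z_p)^{(k_0)} \isoto H^1(Y_1(Np^r),\Z_p)^{(k_0),\Delta}$ at every finite level. Taking ordinary parts (which commute with the $\Delta$-action) and passing to the inverse limit in $r$ produces the second isomorphism in the lemma.

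Next, I would prove the projectivity assertion by invoking Ohta's $\Lambda$-adic Eichler-Shimura theorem \cite{ohta1999}, as reformulated in \cite[Section 6.3]{FK2024}: $H^1_\Lambda(Y_1(N))$ is a dualizing module over the Hida Hecke algebra $\fH^\ord$ of tame level $\Gamma_1(N)$, and $\fH^\ord$ is finite flat over $\Lambda[\Delta]$. Decomposing $\Z_p[\Delta]$ into a product of complete local rings indexed by characters $\chi$ of the prime-to-$p$ part of $\Delta$, the question reduces to projectivity of each character component over the corresponding local factor $\Z_p[\Delta_p]$, where $\Delta_p$ is the $p$-Sylow subgroup of $\Delta$; from the module-theoretic input above this reduces to a finite-rank structure calculation.

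Finally, with projectivity in hand, the left-hand isomorphism follows formally. For any projective $\Z_p[\Delta]$-module $M$, the norm map $\mathrm{Nm}\colon M_\Delta \to M^\Delta$, $[m] \mapsto \sum_{\delta \in \Delta}\delta m$, is an isomorphism---reduce by additivity to $M = \Z_p[\Delta]$, where it is elementary. The composite of the two maps in the lemma's display equals $\mathrm{Nm}$ for $M = H^1_\Lambda(Y_1(N))$, and since the second map is an isomorphism by the previous step, the first map is an isomorphism as well. The main obstacle is the projectivity step: since $p \mid |\Delta| = N-1$, the group ring $\Z_p[\Delta]$ is not semisimple, and projectivity cannot be deduced by a naïve character argument---it genuinely requires the module-theoretic input of Hida-Ohta theory.
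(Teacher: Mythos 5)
Your Hochschild--Serre treatment of the right-hand isomorphism is essentially sound (the paper instead notes $H^0(Y_1,\Z_p)^\ord=0$ because $U_p$ acts by $p$, which kills $E_2^{p,0}$ after ordinary projection, but your $\omega^{k_0}$-eigenspace argument accomplishes the same thing since $k_0\not\equiv 0 \pmod{p-1}$). The left-hand isomorphism does follow formally once projectivity is known, exactly as you say. The genuine problem is the projectivity step, and it is not a small one.

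You assert that $\fH^\ord$ (tame level $\Gamma_1(N)$) is finite flat over $\Lambda[\Delta]$ and deduce projectivity of $H^1_\Lambda(Y_1(N))$ from the dualizing-module statement of Ohta/Fukaya--Kato. But flatness of the Hida Hecke algebra over $\Lambda[\Delta]$ --- i.e.~projectivity of $\fH^\ord$ (or $\h^\ord_1$, $m_{\Lambda,1}$) as a $\Z_p[\Delta]$-module --- is precisely one of the conclusions that this lemma is used to establish. In the paper, Proposition \ref{prop: Gamma_0 ES} proves $\Z_p[\Delta]$-projectivity of $\h^\ord_1$, $m_{\Lambda,1}$, $\fH_1^\ord$, $S_{\Lambda,1}$ by first invoking Lemma \ref{lem:Delta freeness} to get projectivity of $H^1_\Lambda(Y_1(N))$, and then splitting off these modules via the $\Lambda$-adic Eichler--Shimura sequences. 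Your argument runs this implication in the reverse direction and is therefore circular: you need the projectivity of the cohomology to know the Hecke algebra is $\Z_p[\Delta]$-flat, not the other way around. (Hida theory gives flatness over $\Lambda=\Z_p\lb\Z_p^\times\rb$, the wild diamond operators, for free, but the tame part $\Z_p[\Delta]$ with $p\mid|\Delta|$ is exactly where the difficulty lies --- as you yourself correctly observe at the end.)

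The paper's actual route to projectivity is homological and does not pass through Hecke algebras at all. The covering $Y_1(Np^r)\to Y(\Gamma_0(N)\cap\Gamma_1(p^r))$ is \'etale Galois with group $\Delta$, so the singular cochain complex $C^\bullet(Y_1,\Z_p)$ is a bounded complex of flat $\Z_p[\Delta]$-modules. Applying the (idempotent) ordinary projector keeps it a bounded complex of flat $\Z_p[\Delta]$-modules, now with cohomology concentrated in degree~$1$ (again using $H^0(Y_1,\Z_p)^\ord=0$). Replacing this by a quasi-isomorphic perfect complex shows $H^1(Y_1,\Z_p)^\ord$ has finite projective dimension over $\Z_p[\Delta]$, and combined with $p$-torsion-freeness (via Auslander--Buchsbaum, for instance) this forces projectivity. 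You will need to substitute some argument of this type; the Eichler--Shimura input cannot be used here without circularity.
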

\begin{proof}
It is enough to prove the result at level $\Gamma_1(Np^r)$ for fixed $r$; taking inverse limits gives the result for $\Lambda$-adic cohomology.  Let $Y_1 = Y_1(Np^r)$ and $Y_0=Y(\Gamma_0(N) \cap \Gamma_1(p^r))$. 

First note that $H^0(Y_1,\Z_p)^\ord$ is zero because $U_p$ acts by $p$ on $H^0(Y_1,\Z_p)$.  Then the following are evident:
\begin{enumerate}
\item $Y_1 \to Y_0$ is an \'etale covering with Galois group $\Delta$,
\item the cohomology $H^i(Y_1,\Z_p)^\ord$ is only supported in degree $i=1$,  and
\item $H^1(Y_1,\Z_p)^\ord$ is $p$-torsion-free.
\end{enumerate}
These three facts are enough to imply that $H^1(Y_1,\Z_p)^\ord$ is a projective $\Z_p[\Delta]$-module, as follows. By the comparison isomorphism between \'etale and Betti cohomology, it is enough to show that the singular cohomology is projective.
Let $C^\bullet(Y_1,\Z_p)$ be the complex of singular cochains on $Y_1$; by (1), it is a bounded complex of flat $\Z_p[\Delta]$-modules.  Hecke operators act on $C^\bullet(Y_1,\Z_p)$,  and, since the ordinary projector is an idempotent, the complex $C^\bullet(Y_1,\Z_p)^\ord$ is still a bounded complex of flat $\Z_p[\Delta]$-modules and its cohomology groups are $H^i(Y_1,\Z_p)^\ord$. 
Hence, there is a perfect complex $C^\bullet$ of $\Z_p[\Delta]$-modules and a quasi-isomorphism
\[
C^\bullet \simeq C^\bullet(Y_1,\Z_p)^\ord
\]
(see \cite[Lemma II.5.1, pg.~47]{mumford2008}).  By (2), this implies that $H^1(Y_1,\Z_p)^\ord$ has finite projective dimension over $\Z_p[\Delta]$, which, together with (3), implies that it is projective (see \cite[Theorems 8.10 and 8.12, pg.\ 152]{brown1994}, for instance, or use the Auslander-Buchsbaum formula). 

The composition of the two maps
\[
H^1(Y_1,\Z_p)^\ord_\Delta \to H^1(Y_0,\Z_p)^\ord \to (H^1(Y_1,\Z_p)^\ord)^\Delta 
\]
is equal to the map induced by multiplication by the norm element of $\Z_p[\Delta]$ on $H^1(Y_1,\Z_p)^\ord$, which is an isomorphism since $H^1(Y_1,\Z_p)^\ord$ is $\Z_p[\Delta]$-projective. The map $H^1(Y_0,\Z_p)^\ord \to (H^1(Y_1,\Z_p)^\ord)^\Delta$ is clearly injective, so this completes the proof.
\end{proof}
This lemma allows us to to prove the following analog of Ohta's $\Lambda$-adic Eichler-Shimura isomorphisms.
\begin{prop}
\label{prop: Gamma_0 ES}
There are split-exact sequences of $\Tm$-modules
\begin{align*}
0 \to \bT_\m^0 \to H^1_\Lambda(Y_0(N))_\m  \to \Hom_\Lambda(\bT_\m,\Lambda) \to 0 \\
0 \to \bT_\m^0 \to H^1_\Lambda(X_0(N))_\m  \to \Hom_\Lambda(\bT_\m^0,\Lambda) \to 0. 
\end{align*}
\end{prop}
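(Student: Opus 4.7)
The plan is to deduce both split-exact sequences from Ohta's $\Lambda$-adic Eichler--Shimura theorem at tame level $\Gamma_1(N)$ via descent by Lemma \ref{lem:Delta freeness}, followed by a localization argument to bridge the $U_N$-to-$w_N$ switch.

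First, I would recall that by Ohta's work \cite{Ohta2003} (see also \cite[\S1.7.13, Proposition 6.3.5]{FK2024}), at tame level $\Gamma_1(N)$ there are split-exact sequences of Hecke modules in the $\omega^{k_0}$-eigencomponent analogous to the ones in the statement, with cuspidal subobject and dualizing-module quotient, and with the splitting provided by the ordinary filtration on the $\Lambda$-adic sheaf. Explicitly, writing $\fH_{\Gamma_1}$ and $\h_{\Gamma_1}$ for the standard $\Lambda$-adic ordinary Hecke algebras at tame level $\Gamma_1(N)$ (containing $U_N$), Ohta produces short exact sequences with $\h_{\Gamma_1,(k_0)}$ on the left and $\Hom_\Lambda(\fH_{\Gamma_1,(k_0)},\Lambda)$ (resp. $\Hom_\Lambda(\h_{\Gamma_1,(k_0)},\Lambda)$) on the right for the open (resp. compact) modular curve.

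Next, I would take $\Delta$-coinvariants of these sequences, where $\Delta = (\Z/N\Z)^\times$ is the group of tame diamond operators. By Lemma \ref{lem:Delta freeness}, the cohomology groups are $\Z_p[\Delta]$-projective, so their coinvariants compute $H^1_\Lambda(Y_0(N))$ and $H^1_\Lambda(X_0(N))$, and exactness and the splitting are preserved. For the outer terms, the $\Gamma_1(N)$-tame Hecke algebras are themselves $\Z_p[\Delta]$-projective (by Hida duality combined with the projectivity of the $\Lambda$-adic spaces of modular and cusp forms), so their coinvariants identify with the corresponding $\Gamma_0(N)$-tame Hecke algebras (with $U_N$ rather than $w_N$). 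Finally, localizing at the maximal ideal containing $w_N+1$ and invoking Appendix \ref{appendix:Hida} converts the $U_N$-version to the $w_N$-version: since $w_N$ separates the two Eisenstein families (which are congruent mod $p$ because $N \equiv 1 \pmod p$), the localization picks out the $w_N=-1$ component and yields $\bT_\m$ and $\bT_\m^0$ exactly.

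The main obstacle is this final step: verifying that the $w_N$-modified Hecke algebras of the paper arise as the appropriate localizations of the $U_N$-algebras obtained by descent from $\Gamma_1(N)$. This translation requires the Hida-theoretic foundations for the $w_N$-version established in Appendix \ref{appendix:Hida}, together with the observation that $w_N$ distinguishes the two Eisenstein families that $U_N$ cannot distinguish mod $p$. Once this identification is in hand, the split structure of Ohta's sequences descends through $\Delta$-coinvariants and localization to give the claimed split exact sequences.
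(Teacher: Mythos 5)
Your overall skeleton matches the paper's proof: start from Ohta's split-exact $\Lambda$-adic Eichler--Shimura sequences at tame level $\Gamma_1(N)$, descend to tame level $\Gamma_0(N)$ by $\Delta$-(co)invariants using Lemma \ref{lem:Delta freeness} and the $\Z_p[\Delta]$-projectivity of all terms, identify the outer terms via Hida duality, and localize at $\m$ at the end. (Using coinvariants rather than invariants is immaterial, since the norm map identifies the two for projective $\Z_p[\Delta]$-modules.)

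However, your final step --- ``localizing at the maximal ideal containing $w_N+1$ converts the $U_N$-version to the $w_N$-version'' --- is not correct as stated, and this is where the actual content lies. The tame-level-$\Gamma_0(N)$ Hecke algebra with $U_N$ does not contain $w_N$, so it has no maximal ideal containing $w_N+1$ to localize at; and the fact that $w_N$ separates the two Eisenstein families is irrelevant to this proposition (it matters for the single-Eisenstein setup later, not for Eichler--Shimura). The correct bridge is purely a duality statement: the descended sequences have sub and quotient terms naturally identified with $\Hom_\Lambda(S_{\Lambda,1},\Lambda)^{\Delta\text{-descent}} \cong \Hom_\Lambda(S_\Lambda,\Lambda)$ and with $m_\Lambda$, $S_\Lambda$ (using that a $\Gamma_1(N)$-form fixed by the diamond operators is a $\Gamma_0(N)$-form), and then Theorem \ref{thm:atkin-lehner control}(2) identifies $\Hom_\Lambda(S_\Lambda,\Lambda)$ with the $w_N$-modified algebra $\h^\ord$ and $m_\Lambda$ with $\Hom_\Lambda(\fH^\ord,\Lambda)$. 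The resulting sequences are sequences of $\fH^\ord$-modules, and only then does one localize at $\m \subset \fH^\ord$; no ``conversion'' from the $U_N$-algebra occurs, and no eigencomponent for $w_N$ is being extracted at this stage. A secondary, smaller point: Lemma \ref{lem:Delta freeness} as stated only treats $H^1_\Lambda(Y_1(N))$; the projectivity of $H^1_\Lambda(X_1(N))$ and the identification $H^1_\Lambda(X_0(N)) \cong H^1_\Lambda(X_1(N))^\Delta$ must be deduced from the splitting of the compactly-supported sequence together with the projectivity of its outer terms, which is an extra argument you should make explicit rather than attribute to the lemma.
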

\begin{proof}
Let $\mathfrak{H}^\ord_1$ be the $p$-adic Hida Hecke algebra of tame level $\Gamma_1(N)$ and let $\h^\ord_1$ be its cuspidal quotient. Let $m_{\Lambda,1}$ and $S_{\Lambda,1}$ be the spaces of $\Lambda$-adic modular forms and cuspforms, respectively, of tame level $\Gamma_1(N)$, and let $m_\Lambda$ and $S_\Lambda$ be the corresponding spaces of tame level $\Gamma_0(N)$.

Ohta's $\Lambda$-adic Eichler-Shimura isomorphisms \cite{ohta1999,ohta2000} imply that there are exact sequences of $\fH_1^\ord$-modules
\begin{align}
\label{eq:Y es}
0 \to \h_1^\ord \to H^1_\Lambda(Y_1(N)) \to m_{\Lambda,1} \to 0 \\
\label{eq:X es}
0 \to \h_1^\ord \to H^1_\Lambda(X_1(N)) \to S_{\Lambda,1} \to 0.
\end{align}
These sequences are defined using the action of $G_{\Q_p}$ and since $k_0 \ne 1$ (because $k_0$ is even) the actions on the sub and quotient are distinguished,  so the sequences split as $\fH_1^\ord$-modules (see \cite[Section 3.4]{Ohta2003} and \cite[Section 6.3.12]{FK2024}); {\it a fortiori},  they split as $\Z_p[\Delta]$-modules (since the diamond operators are in $\fH_1^\ord$). Lemma \ref{lem:Delta freeness} and the splitting of \eqref{eq:Y es} imply that $\h_{1}^\ord$ and $m_{\Lambda,1}$ are $\Z_p[\Delta]$-projective. Since the dual of a projective $\Z_p[\Delta]$-module is projective, this implies that $\fH_1^\ord$ and $S_{\Lambda,1}$ are projective $\Z_p[\Delta]$-modules. Then the sequence \eqref{eq:X es} implies that $H^1_\Lambda(X_1(N))$ is also $\Z_p[\Delta]$-projective. 
Just as in the proof of Lemma \ref{lem:Delta freeness}, this implies that the natural map
\[
H^1_\Lambda(X_0(N)) \to H^1_\Lambda(X_1(N))^\Delta
\]
is an isomorphism. Hence taking $\Delta$-invariants of the split-exact sequences \eqref{eq:Y es} and \eqref{eq:X es} yields exact sequences
\begin{align}
\label{eq:Y0 es}
0 \to (\h_1^\ord)^\Delta \to H^1_\Lambda(Y_0(N)) \to (m_{\Lambda,1})^\Delta \to 0 \\
\label{eq:X0 es}
0 \to (\h_1^\ord)^\Delta \to H^1_\Lambda(X_0(N)) \to (S_{\Lambda,1})^\Delta \to 0.
\end{align}
Now note that the natural maps $m_\Lambda \to (m_{\Lambda,1})^\Delta$ and $S_\Lambda \to (S_{\Lambda,1})^\Delta$ are isomorphisms because a modular form of level $\Gamma_1(N)$ that is invariant under the diamond operators is also a form of level $\Gamma_0(N)$.  This implies that the dual map
\begin{equation}
\label{eq:Delta-coinvariants on T}
\Hom_\Lambda(S_{\Lambda,1},\Lambda)_\Delta \to \Hom_\Lambda(S_\Lambda,\Lambda), 
\end{equation}
is an isomorphism.  But, by duality (see Theorem \ref{thm:classical hida theory} and Theorem \ref{thm:atkin-lehner control}), there are isomorphisms $\Hom_\Lambda(S_{\Lambda,1},\Lambda) \cong \h_1^\ord$ and $\Hom_\Lambda(S_\Lambda,\Lambda) \cong \h^\ord$. Moreover, since $\h_1^\ord$ is $\Z_p[\Delta]$-free,  the norm map induces an isomorphism
\[
(\h_1^\ord)_\Delta \cong (\h_1^\ord)^\Delta.
\]
Combining this isomorphism with the duality isomorphisms and \eqref{eq:Delta-coinvariants on T} gives a string of isomorphisms
\[
(\h_1^\ord)^\Delta \cong (\h_1^\ord)_\Delta \cong \Hom_\Lambda(S_{\Lambda,1},\Lambda)_\Delta \cong  \Hom_\Lambda(S_\Lambda,\Lambda) \cong \h^\ord.
\]
Hence \eqref{eq:Y0 es} and \eqref{eq:X0 es} are isomorphic to
\begin{align}
\label{eq:Y0 es1}
0 \to \h^\ord \to H^1_\Lambda(Y_0(N)) \to m_{\Lambda} \to 0 \\
\label{eq:X0 es1}
0 \to \h^\ord \to H^1_\Lambda(X_0(N)) \to S_{\Lambda}\to 0.
\end{align}
Localizing at $\m$ then completes the proof.
\end{proof}

Just as in Section \ref{subsec:primitive setup}, there is an exact sequence
\[
0 \to H^1_\Lambda(X_0(N)) \to H^1_\Lambda(Y_0(N)) \xrightarrow{\phi} \Lambda \to 0.
\]
where $\phi(\zinf)=1$. Since $\zinf \in H^1_\Lambda(Y_0(N))^-$, this implies that there is an exact sequence
\[
0 \to H^1_\Lambda(X_0(N))^- \to H^1_\Lambda(Y_0(N))^- \xrightarrow{\phi} \Lambda \to 0.
\]
\begin{prop}
\label{prop:mazur L setup}
Taking $(X,\phi,x,\cL)= ( H^1_\Lambda(Y_0(N))^-, \phi,\zinf,\cL_p^+(\m))$ gives an $L$-symbol setup.
\end{prop}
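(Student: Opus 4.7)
The plan is to mirror the proof of Proposition \ref{prop:primitive L setup}, substituting Proposition \ref{prop: Gamma_0 ES} for the Fukaya--Kato/Ohta Eichler--Shimura input used there. The $\Tm$-module structure, the $\Lambda$-linearity of $\phi$, and the normalization $\phi(\zinf) = 1$ are all built into the exact sequence preceding the statement, so only two substantive points remain: first, that $\cL_p^+(\m)$ lies in $\ker(\phi) \otimes_{\Z_p} \Z_p \lb \Zpx \rb$, and second, that $H^1_\Lambda(Y_0(N))^-_\m$ is a dualizing module for $\Tm$ in a way that identifies the submodule $H^1_\Lambda(X_0(N))^-_\m$ with a dualizing module for $\Tmo$.

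For the first point, the argument is essentially the same as in the primitive case. By Ohta's \cite[Proposition 4.3.4]{ohta1999}, the ordinary modular symbols $\{\infty, a/p^r\}^{\ord}_r$ appearing in the construction of $\cL_p^+$ in Section \ref{sec:pL} already lie in $H^1(X_1(Np^r),\Z_p)^{\ord}$; after taking inverse limits, $\Delta$-invariants (enabled by Lemma \ref{lem:Delta freeness}), minus parts, and localizing at $\m$, we conclude that $\cL_p^+(\m) \in H^1_\Lambda(X_0(N))^-_\m \otimes_{\Z_p} \Z_p \lb \Zpx \rb$, which is exactly $\ker(\phi) \otimes_{\Z_p} \Z_p \lb \Zpx \rb$.

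The main content, and the principal obstacle, is producing the isomorphism $f$. The plan is to take minus parts of the two split-exact sequences of Proposition \ref{prop: Gamma_0 ES}. The critical input is that the image of $\Tmo$ inside $H^1_\Lambda(Y_0(N))_\m$ via the Ohta-style splitting lies in the plus part for complex conjugation: in the Eichler--Shimura decomposition, this sub is realized through the unramified part of the $G_{\Q_p}$-representation, and comparing Galois-theoretic normalizations forces complex conjugation to act trivially there. Consequently $(\Tmo)^- = 0$, and taking minus parts of the two sequences yields
\begin{align*}
H^1_\Lambda(Y_0(N))^-_\m &\isoto \Hom_\Lambda(\Tm,\Lambda), \\
H^1_\Lambda(X_0(N))^-_\m &\isoto \Hom_\Lambda(\Tmo,\Lambda).
\end{align*}
The compatibility condition (that $f$ restricts to the second isomorphism on $X^0$) is then automatic, since both sequences of Proposition \ref{prop: Gamma_0 ES} arise functorially from the common geometric inclusion $X_0(N) \hookrightarrow Y_0(N)$ and its $\Lambda$-adic limit. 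This is the only non-formal step: once the Galois-equivariance of the Eichler--Shimura splitting under complex conjugation is verified, the rest is bookkeeping.
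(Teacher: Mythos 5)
Your overall architecture is right --- reduce to two points (membership of $\cL_p^+(\m)$ in $\ker(\phi)\otimes\Z_p\lb\Zpx\rb$, and the dualizing-module isomorphism compatible with the sub $X^0$), handle the first as in the primitive case, and extract the second from Proposition \ref{prop: Gamma_0 ES}. This is indeed how the paper proceeds. But the key step of your argument has a genuine gap. The two sequences in Proposition \ref{prop: Gamma_0 ES} are exact sequences of $\Tm$-modules whose submodule $\Tmo$ is cut out by the action of $G_{\Q_p}$ (it is the distinguished piece of the local-at-$p$ filtration). Complex conjugation is not an element of the decomposition group at $p$, so there is no reason for it to preserve this submodule; in particular, ``taking minus parts of the two split-exact sequences'' is not a well-defined operation, and the assertion that ``comparing Galois-theoretic normalizations forces complex conjugation to act trivially'' on the unramified part conflates the place at infinity with the place at $p$. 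Unramifiedness at $p$ imposes no constraint on the $c$-action, and already for a single weight-$2$ ordinary form the $G_{\Q_p}$-stable line in the Galois representation is generically not $c$-stable. So $(\Tmo)^-=0$ is not something you can conclude, and it is not the statement you actually need.

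What you need is the weaker statement that the composite $H^1_\Lambda(Y_0(N))^-_\m \hookrightarrow H^1_\Lambda(Y_0(N))_\m \twoheadrightarrow \Hom_\Lambda(\Tm,\Lambda)$ is an isomorphism carrying $H^1_\Lambda(X_0(N))^-_\m$ onto $\Hom_\Lambda(\Tmo,\Lambda)$; equivalently, that the minus part meets the sub trivially and has the right rank. This does not follow formally from the unsigned sequences: it requires either (i) a residual computation --- modulo the maximal ideal the sub is the eigenspace of $\overline\rho = 1\oplus\omega^{k_0-1}$ on which $c$ acts by $+1$, so sub $\oplus\, H^- = H$ by Nakayama and a rank count --- or (ii) the route the paper intends, namely invoking the \emph{signed} Eichler--Shimura statements already established at level $\Gamma_1(N)$ by Ohta and Fukaya--Kato (this is exactly the role of \cite[Diagram (3.5.3)]{Ohta2003} in the proof of Proposition \ref{prop:primitive L setup}) and descending them to level $\Gamma_0(N)$ by taking $\Delta$-invariants, which is legitimate because $c$ commutes with the diamond operators and because Lemma \ref{lem:Delta freeness} and the projectivity arguments in Proposition \ref{prop: Gamma_0 ES} make the passage to $\Delta$-invariants exact. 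Your treatment of the membership $\cL_p^+(\m)\in H^1_\Lambda(X_0(N))^-_\m\otimes\Z_p\lb\Zpx\rb$ and of the compatibility of $f$ with $X^0$ is fine once this is repaired.
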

\begin{proof}
Just as in the proof of Proposition \ref{prop:primitive L setup}, given Proposition \ref{prop: Gamma_0 ES}.
\end{proof}
\subsection{Gorenstein results and generators of the Eisenstein ideal}
\label{sec:goren}

In this section, we give some numerical criteria for when the Eisenstein ideal $\Im$ is principal (and thus $\Tm$ and $\Tmo$ are Gorenstein), when $\Im$ is generated by $U_p-1$, and when $\Tmo$ has rank 1 over $\Lambda$.

Let $\Tmtv{k}$ and $\Tmv{k}$ denote the respective Hecke algebras of $M_k(\Gamma_0(N))^{\ord}$ and $M_k(\Gamma_0(Np))^{\ord}$ and let $\Tmtvo{k}$ and $\Tmvo{k}$ denote the respective Hecke algebras of $S_k(\Gamma_0(N))^{\ord}$ and $S_k(\Gamma_0(Np))^{\ord}$.  Here all of these Hecke algebras are defined to contain $w_N$ rather than $U_N$.

\begin{thm}
\label{thm:no change}
If $k>2$,  there are isomorphisms $\Tmv{k} \cong \Tmtv{k}$ and $\Tmvo{k} \cong \Tmtvo{k}$.  For $k=2$, the same conclusions hold when $p$ is not a $p$-th power modulo $N$.
\end{thm}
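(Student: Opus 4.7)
The strategy is to compare the ordinary modular-form spaces at levels $\Gamma_0(N)$ and $\Gamma_0(Np)$ via the ordinary $p$-stabilization map. Since $U_p$ and $w_N$ commute with the tame Hecke operators, and since for an eigenform $f$ of level $N$ with $T_p$-eigenvalue $a_p$ the two $p$-stabilizations are the eigenforms of $U_p$ with eigenvalues the roots of $X^2-a_pX+p^{k-1}$ (both of which share the $w_N$-eigenvalue of $f$), the $p$-stabilization map induces a surjection $\Tmv{k} \onto \Tmtv{k}$ after localizing at $\m$. The plan is to prove this map is also injective, which reduces to showing that after localizing at $\m$ there is no ordinary $p$-new contribution at level $Np$.

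First I would decompose $M_k(\Gamma_0(Np))^\ord$ (and its cuspidal subspace) into its $p$-old and $p$-new summands, noting that the ordinary projector cuts the $p$-old piece down to a single copy of $M_k(\Gamma_0(N))^\ord$, matching the tame Hecke algebra as a Hecke module. The Eisenstein part at level $Np$ with trivial Nebentypus consists entirely of series induced from characters of conductor dividing $N$, so there is no $p$-new Eisenstein contribution at any weight. For cuspforms at weight $k > 2$, a $p$-new newform $g$ of weight $k$ and level $\Gamma_0(Np)$ has Steinberg local component at $p$, so $U_p(g) = \pm p^{(k-2)/2}$, which for $k > 2$ has positive $p$-adic valuation and therefore does not survive the ordinary projector. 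This proves the two isomorphisms for $k > 2$; the cuspidal version follows from the full version by quotienting by the module-theoretic description of the cuspidal quotient in Lemma \ref{lem:fiber prod}.

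For $k = 2$, the previous argument fails because $\pm p^{(k-2)/2} = \pm 1$ is a unit, so there can be ordinary $p$-new cuspforms. The task is to show that no such form contributes to the $\m$-localization, i.e., no such $g$ has residual representation $\bar\rho_g \cong \mathbf{1} \oplus \omega$ with $w_N$-eigenvalue $-1$. For such a hypothetical $g$, $\bar\rho_g$ is unramified outside $Np$, has Steinberg local behavior at $p$, and is ramified at $N$ with a reducible residual representation of the prescribed shape. A standard Galois-cohomology / cup-product computation (as in the work of Calegari--Emerton, Lecouturier \cite{lecouturier2016,lecouturier2021}, and the structure results of \cite{PG3,Wake-JEMS,PG5} recalled in \S \ref{sec:goren}) shows that the existence of such a class, which would produce a congruence between $\mathcal{E}_2^-$ and a $p$-new cuspform, is controlled by the image of $p$ under the discrete logarithm $\log_N : \F_N^\times \to \F_p$; specifically, no such $g$ exists precisely when $p$ is not a $p$-th power modulo $N$.

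The main obstacle is this weight-$2$ case: translating the condition "$p$ is not a $p$-th power modulo $N$" into the vanishing of the $p$-new ordinary contribution to $\Tmv{2}$ at $\m$. The higher-weight cases are elementary given Hida theory and the classification of ordinary $p$-new forms, but the weight-$2$ case genuinely uses the deeper deformation-theoretic input from the cited references on the structure of the Eisenstein-local Hecke algebra in Mazur's setting.
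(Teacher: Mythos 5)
Your proposal is correct and follows essentially the same route as the paper: for $k>2$ the point is exactly that an ordinary $p$-new form would need $U_p$-eigenvalue $\pm p^{(k-2)/2}$, which is a non-unit, so the ordinary Hecke algebra does not grow from level $N$ to level $Np$; and for $k=2$ both you and the paper defer to the Wake--Wang-Erickson results (\cite[Theorem 1.4.5 and Proposition A.3.1]{PG5}) under the hypothesis that $p$ is not a $p$-th power modulo $N$. The extra detail you supply about $p$-stabilization and the absence of $p$-new Eisenstein series is a harmless elaboration of the same argument.
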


\begin{proof}
This result is easy is $k \neq 2$.  Indeed, an eigenform $f$ of weight $k$ which is $p$-new has $a_p(f) = \pm p^{\frac{k-2}{2}}$ and thus is ordinary if and only if $k=2$.  In particular, when $k \neq 2$, none of these ordinary Hecke algebras grow from level $N$ to level $Np$.

The case of $k=2$ is much deeper and follows from the results of the second author and Wang-Erickson,  namely \cite[Theorem 1.4.5 and Proposition A.3.1]{PG5} and relies on the hypothesis that $p$ is not a $p$-th power modulo $N$.
\end{proof}

Returning to the setting of \S \ref{sec:eis}, we have $\Tm$ and $\Tmo$ are the localized Hida Hecke algebras corresponding to the residual representation $1 \oplus \omega^{k_0-1}$ and we now give criteria for when these rings are Gorenstein.  To this end, let  $g$ denote a generator of $\F_N^\times$ and define $\log_N : \F_N^\times \to \F_p$ by $\log_N(g^a) = a \pmod{p}$.  Note that this map is well-defined as $a$ is defined modulo $N-1$ and $p \mid N-1$.
Further, fix $\zeta_p$ a $p$-th root of unity in $\F_N^\times$.

\begin{thm}
\label{thm:principal}
Assume that both of the following two conditions hold:
\begin{enumerate}
\item $p \nmid B_{k_0}$, and
\item $\ds \sum_{i=1}^{p-1} i^{k_0-2} \log_N(1 - \zeta_p^i) \neq 0 \text{~in~} \F_p$.
\end{enumerate}
Then $\Im$ is a principal ideal.  In particular, $\Tm$ and $\Tmo$ are Gorenstein and $\Tmv{k}$ and $\Tmvo{k}$ are Gorenstein for all $k \equiv k_0 \pmod{p-1}$.
\end{thm}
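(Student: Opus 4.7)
The plan is to reduce the Gorensteinness assertions to principality of $\Im$ via Lemma~\ref{lem: both goren}, invoke the structural results on $\Tm$ from \cite{PG5,Deo,Wake-JEMS} to obtain principality, and then descend to finite level using the control theorem of Appendix~\ref{appendix:Hida}.

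By Lemma~\ref{lem: both goren}, $\Im$ is principal if and only if both $\Tm$ and $\Tmo$ are Gorenstein, so it suffices to prove principality of $\Im$. Hypothesis~(1) enters through the shape of the Eisenstein constant term: from Section~\ref{sec:eis}, $\xi = \Eis(T_0) = \zeta_{p,k_0}(1-\langle N\rangle^{1/2})$, and $p\nmid B_{k_0}$ forces $\zeta_{p,k_0}$ to be a unit in $\Lambda$. Hence $\xi$ is a unit multiple of $1-\langle N\rangle^{1/2}$, so all congruences between $\E^-$ and cuspforms are of Euler-factor type, placing $\Tm$ squarely in the setting treated by the cited works.

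Hypothesis~(2) is the Lecouturier-style numerical criterion governing the structure of $\Tm$ at $\m$. In the case $k_0=2$, \cite[Theorem~1.4.5]{PG5} shows that the non-vanishing in~(2) implies that the relative cotangent space of $\Tm$ over $\Lambda$ at $\m$ modulo the summand corresponding to $\Tm/\Im\cong\Lambda$ has $\F_p$-dimension at most one, forcing $\Im$ to be a principal ideal by Nakayama's lemma. The analogous statement for general $k_0$ is given in \cite{Deo,Wake-JEMS}, obtained by applying the same Galois-deformation-theoretic analysis to the residual representation $1\oplus\omega^{k_0-1}$ in place of $1\oplus\omega$.

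For the finite-level specializations, the control theorem reviewed in Section~4.1 and proved in Appendix~\ref{appendix:Hida} provides an element $\omega_{1,k}\in\Lambda$ with $\Tmv{k}\cong\Tm/\omega_{1,k}\Tm$ and $\Tmvo{k}\cong\Tmo/\omega_{1,k}\Tmo$. Since $\Tm$ is free (and hence flat) over $\Lambda$, $\omega_{1,k}$ acts as a non-zero-divisor on $\Tm$, and a quotient of a Gorenstein local ring by a non-zero-divisor is again Gorenstein (\cite[Proposition~3.1.19]{BH1993}), so both $\Tmv{k}$ and $\Tmvo{k}$ are Gorenstein. The main obstacle I anticipate is the careful identification of the criterion~(2) with the cup-product or Merel-type invariants that appear in the cited literature, and confirming that the weight-$2$ arguments for principality of the Eisenstein ideal extend uniformly across the Hida family to the residual representation $1\oplus\omega^{k_0-1}$ for general~$k_0$.
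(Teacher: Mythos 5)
Your proposal follows essentially the same route as the paper: principality of $\Im$ is an appeal to the structural results of \cite{Deo} (the paper cites \cite[Remark 5.7]{Deo}), the Gorenstein statements for $\Tm$ and $\Tmo$ come from Lemma \ref{lem: both goren}, and the finite-level statements come from the control theorem of Appendix \ref{appendix:Hida}. Your added observation that the quotient by the non-zero-divisor $\omega_{1,k}$ preserves Gorensteinness correctly fills in the step the paper leaves implicit.
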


\begin{proof}
The results of \cite{Deo} imply that $\Im$ is principal (see, in particular, \cite[Remark 5.7]{Deo}).  Once we know $\Im$ is principal, then Lemma \ref{lem: both goren} implies that $\Tm$ and $\Tmo$ are Gorenstein and Theorem \ref{thm:atkin-lehner control} implies that $\Tmv{k}$ and $\Tmvo{k}$ are Gorenstein for all $k \equiv k_0 \pmod{p-1}$.
\end{proof}

\begin{rem}
\label{rem:same}
We note that if $k_0 \equiv 2 \pmod{p-1}$, then 
$$
\sum_{i=1}^{p-1} i^{k_0-2} \log_N(1 - \zeta_p^i) =
\sum_{i=1}^{p-1} \log_N(1 - \zeta_p^i) = \log_N\left(\prod_{i=1}^{p=1}1-\zeta_p^i\right) = \log_N(p).
$$
In particular, when $k_0 \equiv 2 \pmod{p-1}$, the hypotheses of Theorem \ref{thm:principal} reduce simply to asking that $p$ is not a $p$-th power modulo $N$
\end{rem}

We now give criteria for when $U_p-1$ generates the Eisenstein ideal.

\begin{thm}
\label{thm:U_p-1}
Assume the hypotheses of Theorem \ref{thm:principal}.  Then $p$ is not a $p$-th power modulo $N$ if and only if  $U_p-1$ generates the Eisenstein ideal.
\end{thm}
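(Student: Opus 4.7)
The plan is to leverage the principality of $\Im$ established in Theorem~\ref{thm:principal} and reduce the question to a single Nakayama-type computation modulo $\m$. Since $U_p$ acts by $1$ on the Eisenstein family $\E^-$, the operator $U_p - 1$ lies in $\Im$; because $\Tm$ is local Noetherian with residue field $\F_p$ and $\Im$ is principal, $\Im/\m\Im$ is a one-dimensional $\F_p$-vector space, and $U_p - 1$ generates $\Im$ if and only if its class in $\Im/\m\Im$ is nonzero. So the whole theorem amounts to showing that this class is nonzero precisely when $\log_N(p) \ne 0$ in $\F_p$, which is the same as saying $p$ is not a $p$-th power modulo $N$.

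To compute the class of $U_p - 1$ in $\Im/\m\Im$, I would appeal to the pseudo-deformation-theoretic description of $\Tm$ worked out in \cite{PG3, PG5, Deo}. That theory identifies the dual of $\Im/(\m\Im + \Im^2)$ with an explicit subspace of a Galois cohomology group classifying deformations of the residual pseudorepresentation $1 \oplus \omega^{k_0-1}$ cut out by local conditions (ordinarity at $p$, tame ramification at $N$ with the correct $w_N$-sign, unramified elsewhere). Under this identification, each canonical generator of $\Im$ is read off as a Kummer-theoretic cocycle: the generator $T_\ell - 1 - \ell^{k_0-1}$ corresponds to Kummer information at $\ell$, the generator $w_N + 1$ to local information at $N$, and crucially $U_p - 1$ to a Kummer class attached to $p$.

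The core computation, carried out in the cited references, is that the image of $U_p - 1$ in the one-dimensional space $\Im/\m\Im$ pairs with the distinguished basis vector of the dual to produce $\log_N(p) \in \F_p$, up to a scalar which is a unit under the hypotheses of Theorem~\ref{thm:principal} (the nonvanishing sum in \eqref{eq:log 1-zeta sum} is precisely what guarantees the relevant structural map is an isomorphism). Hence the image is nonzero in $\F_p$ exactly when $\log_N(p) \ne 0$, and the equivalence asserted in Theorem~\ref{thm:U_p-1} follows.

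The main obstacle is the explicit cohomological identification together with the evaluation of the class attached to $U_p - 1$; both are nontrivial, but essentially already established in \cite{PG5, Deo}. Our role is to assemble those results in a form adapted to the modified Hecke algebra (with $w_N$ in place of $U_N$) used here, leveraging Appendix~\ref{appendix:Hida} to ensure that Hida theory transfers to this setting and that the tangent-space computations of the cited references can be invoked for the $w_N$-sign component without modification.
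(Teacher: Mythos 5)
Your proposal is correct and follows essentially the same strategy as the paper: since $\Im$ is principal and $\Tm$ is local, $U_p-1$ generates $\Im$ exactly when its image in the one-dimensional cotangent space $\Im/\m\Im$ is nonzero, and the content of the theorem is that this image is a unit multiple of $\log_N(p)$, established via the deformation-theoretic description of the Eisenstein Hecke algebra. The paper's proof packages this slightly more concretely: it first reduces (via the control theorem) to a fixed-weight Hecke algebra $\Tmv{k}$ with $k>2$ and then cites a specific result, \cite[Proposition 4.4.2]{Wake-JEMS}, giving an explicit $\F_p[\epsilon]/(\epsilon^2)$-valued homomorphism $\phi$ with $\phi(U_p)=1-\epsilon\log_N(p)$, whereas you gesture at the dual Galois-cohomological identification of the cotangent space. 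These are two descriptions of the same object (the $\F_p[\epsilon]/(\epsilon^2)$-points of the pseudodeformation ring), so the mathematical content is identical. The one step you leave implicit that the paper makes explicit is the reduction to a classical weight $k>2$ before invoking the cited computation, which is needed because those references work at a fixed classical weight; you do acknowledge the role of Appendix~\ref{appendix:Hida} in transferring results to the $w_N$-modified setting, which covers part of this concern.
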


\begin{proof}
First note that $U_p-1$ generates the Eisenstein ideal $\Im$ of $\Tm$ if and only if $U_p-1$ generates the Eisenstein ideal $\Imk$ of $\Tmv{k}$ for one (equivalently any) $k \equiv k_0 \pmod{p-1}$.  Thus it suffices to work in $\Tmv{k}$ to establish the above theorem.

To this end, choose $k \equiv k_0 \pmod{p-1}$ with $k>2$.  By Theorem \ref{thm:principal}, $\Imk$ is principal, and hence there is a surjective homomorphism $\Z_p[x] \to \Tmv{k}$ sending $x$ to a generator of $\Imk$. This map induces an isomorphism $\Tmv{k}/p\Tmv{k} \cong \F_p[x]/(x^r)$ for some $r \ge 1$.
On the other hand, \cite[Proposition 4.4.2]{Wake-JEMS} gives a surjective homomorphism $\phi:\Tmv{k} \to \F_p[\epsilon]/(\epsilon^2)$ such that 
\begin{align*}
T_\ell &\mapsto 1 + \ell^{k_0-1} + \epsilon (\ell^{k_0-1} -1)\log_N(\ell), \\
U_p &\mapsto 1 - \epsilon \log_N(p).
\end{align*}
Hence, an element $t \in \Imk$ is a generator if and only if $\phi(t) \ne 0$. It follows that $U_p-1$ generates $\Imk$ if only if $p$ is not a $p$-th power modulo $N$.
\end{proof}

\begin{rem}
We note that when $k_0 \equiv 2 \pmod{p-1}$, all of the hypotheses of Theorem \ref{thm:U_p-1} again reduce to simply assuming that $p$ is not a $p$-th power modulo $N$.
\end{rem}

Lastly, we give a criteria for when $\Tmo$ has rank 1 over $\Lambda$.

\begin{thm}
\label{thm:rank1}
Assume the hypotheses of Theorem \ref{thm:U_p-1} and that
\begin{equation}
\label{eqn:nrk1}
\prod_{i=1}^{N-1} i^{\left(\sum_{j=1}^{i-1} j^{k_0-1}\right)}
\end{equation}
is not a $p$-th power modulo $N$.  Then $\Tmo$ has rank 1 over $\Lambda$.
\end{thm}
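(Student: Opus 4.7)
The plan is to apply \cite[Corollary C]{Deo}, which provides a complete numerical criterion for the $\Lambda$-rank of $\Tmo$ to equal $1$ via essentially the numerical condition appearing in Theorem \ref{thm:rank1}. More precisely, Deo's corollary states that, under the hypotheses of Theorem \ref{thm:principal} (which are part of the hypotheses of Theorem \ref{thm:U_p-1}), the $\Lambda$-rank of $\Tmo$ equals $1$ if and only if the product \eqref{eqn:nrk1} is not a $p$-th power modulo $N$; Theorem \ref{thm:rank1} is precisely the `if' direction of this biconditional.

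The lower bound $\rk(\Tmo) \ge 1$ is automatic, since Mazur's theorem \cite{mazur1978} (or its straightforward higher-weight generalization) provides at least one cuspidal eigenform in our family congruent to the Eisenstein series $\E^-$, ensuring that $\Tmo$ is non-zero.  The substance of the proof is therefore the upper bound $\rk(\Tmo) \le 1$, which is exactly what the `if' direction of \cite[Corollary C]{Deo} delivers.

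Two technical points warrant attention.  First, the biconditional in \cite[Corollary C]{Deo} is formulated under Vandiver's conjecture for $p$, but that hypothesis is invoked only for the converse direction, where it serves to rule out additional contributions to the rank coming from $p$-parts of cyclotomic class groups; the direction we use is unconditional.  Second, one must align Deo's conventions with the $w_N$-modified Hecke algebra employed in this paper.  If Deo works with the classical $U_N$-formalism, the translation should proceed by first specializing to a classical weight $k \equiv k_0 \pmod{p-1}$ with $k \ge 2$ via the control theorem (Theorem \ref{thm:atkin-lehner control}), reducing $\rk(\Tmo)$ to the $\Z_p$-rank of $\Tmvo{k}$, and then invoking Theorem \ref{thm:no change} to identify $\Tmvo{k}$ with $\Tmtvo{k}$ at level $N$, where all ordinary cuspforms are new at $N$ and the $U_N$- and $w_N$-formalisms encode equivalent data.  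Carrying out this translation carefully is the main obstacle; once it is in place, the theorem follows by direct citation of \cite[Corollary C]{Deo}.
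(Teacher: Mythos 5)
Your approach matches the paper's: reduce to a fixed classical weight $k \equiv k_0 \pmod{p-1}$ via the control theorem (Theorem \ref{thm:atkin-lehner control}), so that $\rank_\Lambda \Tmo = \rank_{\Z_p}\Tmvo{k}$, and then invoke Deo's numerical rank criterion. The one discrepancy is the citation: the paper quotes \cite[Corollary B]{Deo}, which gives the needed implication unconditionally, whereas you extract it from the Vandiver-conditional biconditional of \cite[Corollary C]{Deo} and \emph{assert} that the relevant direction is unconditional --- that assertion happens to be what Corollary B confirms, but your heuristic for it is backwards (Vandiver ``ruling out extra contributions to the rank'' is exactly what an \emph{upper} bound on the rank, i.e.\ the direction you are using, would seem to require), so this should be checked against Deo's actual statements rather than guessed.
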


\begin{proof}
By Theorem \ref{thm:atkin-lehner control}, to compute the rank of $\Tmo$ over $\Lambda$, it suffices to compute the rank of $\Tmvo{k}$ over $\Z_p$ for any $k \equiv k_0 \pmod{p-1}$.  The theorem then follows from \cite[Corollary B]{Deo}.
\end{proof}

\begin{rem}
In \cite[pg.\ 36]{lecouturier2016}, it is verified that when $k_0 \equiv 2 \pmod{p-1}$ the quantity in \eqref{eqn:nrk1} is a $p$-th power modulo $N$ if and only if Merel's number $\prod_{i=1}^{\frac{N-1}{2}} i^i$ is a $p$-th power modulo $N$.
\end{rem}

\subsection{Results}
By Proposition \ref{prop:mazur L setup},   $(H^1_\Lambda(Y_0(N))^-, \phi,\zinf,\cL_p^+(\m))$ is an $L$-symbol setup for the single-Eisenstein Hecke algebra setup $(\Z_p,\Lambda,\Tm,\E^-,T_0)$. Applying the results of Section \ref{sec:Axiomatics} to this setup, we obtain a theorem which is essentially identical to Theorem \ref{thm:BP case}:

\begin{thm}
\label{thm:Mazur case}
\hfill
\begin{enumerate}
\item 
If $\Tm$ is Gorenstein, then there is a unique $L^+_p(\m)\in \Tm \lb \Zpx \rb$ such that $\cL^+_p(\m)=L^+_p(\m) \cdot \zinf$.  Moreover, $\mathrm{content}_{\bT_\m}(L^+_p(\m)) \subseteq \mathcal{I}_\m$.
\item 
Suppose that $\Tmo$ is Gorenstein,  so that $H^1_\Lambda(X_0(N))^+_\m$ is free of rank 1 over $\Tmo$. Then, for every generator $e$ of $H^1_\Lambda(X_0(N))^+_\m$, there is a unique $L^+_p(\m)^{0}_e\in \bT_\m^0\lb \Zpx \rb$ such that $\cL^+_p(\m)=L^+_p(\m)^{0}_{e} \cdot e$.
\item 
If $\I_\m$ is a principal ideal with generator $t$, then $\Tm$ and $\Tmo$ are Gorenstein and $H^1_\Lambda(X_0(N))^+_\m$ is generated by $t \zinf$. Moreover,  there is an equality $$L^+_p(\m)= L^+_p(\m)^{0}_{t\zinf} t$$ in $\Im \lb \Zpx \rb$.  
\end{enumerate}
\end{thm}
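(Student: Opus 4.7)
The proof should mirror the proof of Theorem \ref{thm:BP case} from the primitive case very closely, since the axiomatic machinery of Section \ref{sec:Axiomatics} has been set up precisely to handle both cases uniformly. The plan is to observe that Proposition \ref{prop:mazur L setup} verifies the hypothesis that $(H^1_\Lambda(Y_0(N))^-_\m, \phi, \zinf, \cL_p^+(\m))$ is an $L$-symbol setup associated to the single-Eisenstein Hecke algebra setup $(\Z_p, \Lambda, \Tm, \Eis, T_0)$ constructed in Section \ref{sec:eis}. Once that is in place, the three parts of the theorem are immediate translations of the corresponding axiomatic lemmas.

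Concretely, for part (1), I would apply Lemma \ref{lem: L when T goren} to the $L$-symbol setup: Gorensteinness of $\Tm$ gives that $H^1_\Lambda(Y_0(N))^-_\m$ is free of rank $1$ over $\Tm$, generated by $\zinf$ (by the part of the axiomatic Lemma \ref{lem: T goren} that identifies the distinguished element $x$ with a generator), which produces the unique $L^+_p(\m)\in \Tm\lb \Zpx \rb$ with $\cL^+_p(\m)=L^+_p(\m)\cdot \zinf$. The ``moreover'' clause $\mathrm{content}_{\Tm}(L^+_p(\m))\subseteq \mathcal{I}_\m$ is exactly the assertion in Lemma \ref{lem: L when T goren} that $L^+_p(\m)\in \mathcal{I}_\m\lb\Zpx\rb$, since the membership of all coefficients in $\mathcal{I}_\m$ is equivalent to content being contained in $\mathcal{I}_\m$.

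For part (2), I would apply Lemma \ref{lem: L when T^0 goren}: Gorensteinness of $\Tmo$ forces the dualizing module $H^1_\Lambda(X_0(N))^-_\m$ to be free of rank $1$ over $\Tmo$ (this identification of the dualizing module is the content of Proposition \ref{prop: Gamma_0 ES}), so each choice of generator $e$ produces a unique $L^+_p(\m)^0_e\in \Tmo\lb\Zpx\rb$ with $\cL_p^+(\m)=L^+_p(\m)^0_e\cdot e$. For part (3), I would apply Lemma \ref{lem: L when I principal} together with the combined Lemma \ref{lem: both goren}: principality of $\I_\m$ forces both $\Tm$ and $\Tmo$ to be Gorenstein, and for any generator $t$ the element $t\cdot \zinf$ generates $H^1_\Lambda(X_0(N))^-_\m$ as a $\Tmo$-module. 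The factorization $L^+_p(\m) = L^+_p(\m)^0_{t\zinf}\cdot t$ in $\mathcal{I}_\m\lb\Zpx\rb$ is then precisely part \eqref{part:rel} of Lemma \ref{lem: L when I principal}.

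There is essentially no obstacle here beyond carefully matching up notation: the nontrivial content has already been absorbed into Proposition \ref{prop: Gamma_0 ES} (which establishes the $\Lambda$-adic Eichler--Shimura isomorphism in the $\Gamma_0(N)$-setting by descending from Ohta's result via $\Delta$-invariants) and into Proposition \ref{prop:mazur L setup} (which verifies the $L$-symbol axioms). The only thing to watch is the sign conventions on the complex-conjugation eigenspaces—the $L$-symbol $\cL_p^+(\m)$ is a minus-sign cohomology class (functional on plus modular symbols), so it is the minus-part sequence that furnishes the exact sequence feeding into the $L$-symbol setup, and the identifications of dualizing modules have to be applied with matching signs.
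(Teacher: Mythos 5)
Your proposal matches the paper's proof exactly: the paper deduces all three parts by applying Lemmas \ref{lem: L when T goren}, \ref{lem: L when T^0 goren}, and \ref{lem: L when I principal} (together with Lemma \ref{lem: both goren}) to the $L$-symbol setup furnished by Proposition \ref{prop:mazur L setup}, just as you describe. Your closing caution about the complex-conjugation sign conventions is well placed but does not change the argument.
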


 Note that Theorem \ref{thm:principal} gives criteria for when the hypothesis of part \eqref{part:principal} holds. We now move on to the case where $U_p-1$ generates the Eisenstein ideal.  We use notation analogous to that used in Section \ref{subsec:primitive results}. In particular, for a fixed weight $k$, the forms $f_{k,1},\dots, f_{k,r_k}$ are a complete list of Galois-conjugacy-classes of eigenforms for $\bT_{\m,k}$ and $\varpi_{f_{k,i}}$ denotes a uniformizer in the $p$-adic Hecke field of $f_{k,i}$.

\begin{thm}
\label{thm:Mazur U_p-1 gens}
Let $e=(U_p-1)\zinf$. Assume that all of the following three conditions hold:
\begin{enumerate}[label=(\alph*)]
\item \label{mazur2}
$p \nmid B_{k_0}$,
\item \label{mazur4}
$\ds \sum_{i=1}^{p-1} i^{k_0-2} \log_N(1 - \zeta_p^i) \neq 0$ in $\F_p$, and
\item \label{mazur3}
$p$ is not a $p$-th power modulo $N$.
\end{enumerate}
Then the following four statements are all true:
\begin{enumerate}
\item 
\label{part:U_p-1 gens 1}
$L^+_p(\m,\omega^0)= L^+_p(\m,\omega^0)^{0}_{e} \cdot (U_p-1)$ and $L^+_p(\m,\omega^0)^{0}_{e} \in (\bT_\m^0\lb u \rb)^\times$.
\item 
\label{part:U_p-1 gens 2}
For every weight $k$ and index $i$,  the $\mu$- and $\lambda$-invariants of $f_{k,i}$ are
$$
\mu(L^+_p(f_{k,i},\omega^0)) = \mathrm{val}_{\varpi_{f_{k,i}}}(a_p(f_{k,i})-1) \mbox{ and }
\lambda(L^+_p(f_{k,i},\omega^0)) = 0.
$$
\item 
\label{part:U_p-1 gens 3}
For every weight $k$,  the sum of the $\mu$-invariants is given by
\[
\sum_{i=1}^{r_k} \mu(L^+_p(f_{k,i},\omega^0)) = \mathrm{val}_p(N-1) + \mathrm{val}_p(k).
\]
\item
\label{part:U_p-1 gens last}
For every integer $M$ and every weight $k$ such that $\mathrm{val}_p(k)>M\mathrm{rank}_\Lambda(\Tmo)$, there exists an index $i$ such that
\[
 \mu(L^+_p(f_{k,i},\omega^0))>M.
\]
\end{enumerate}
If,  in addition,
$$
\prod_{i=1}^{N-1} i^{\left(\sum_{j=1}^{i-1} j^{k_0-1}\right)} \mbox{ is not a } p\mbox{-th power},
$$
then $\rank_\Lambda \Tmo =1$ and 
$$
\mu(L^+_p(f_{k},\omega^0)) = \mathrm{val}_p(N-1) + \mathrm{val}_p(k),
$$
where $f_k$ is the unique form of weight $k$ in the Hida family corresponding to the isomorphism $\Tmo\cong \Lambda$.
\end{thm}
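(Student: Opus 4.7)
The plan is to apply the axiomatic results of Section \ref{sec:Axiomatics} to the $L$-symbol setup of Proposition \ref{prop:mazur L setup}. Hypotheses \ref{mazur2} and \ref{mazur4} place us within Theorem \ref{thm:principal}, so $\Im$ is principal; adding \ref{mazur3} and invoking Theorem \ref{thm:U_p-1} then shows that $U_p-1$ generates $\Im$. With $\Im$ principal, Theorem \ref{thm:Mazur case} already furnishes both normalizations $L^+_p(\m)$ and $L^+_p(\m)^0_e$, where $e=(U_p-1)\zinf$, along with the factorization $L^+_p(\m) = L^+_p(\m)^0_e \cdot (U_p-1)$ in $\Im\lb\Zpx\rb$.

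To promote this to part \eqref{part:U_p-1 gens 1}, the plan is to invoke Lemma \ref{lem: lam(c) generates} with $c\colon \Tm\lb u\rb \to \Tm$ the evaluation at the trivial character on $1+p\Z_p$. Its hypothesis reduces to checking that $L^+_p(\m,\omega^0,c)$ is a generator of $\Im$, i.e.\ equals $(U_p-1)\cdot v$ for a unit $v\in \Tm^\times$. This is the main obstacle, and the idea is to transplant the argument of \cite[Theorem 3.15]{BP2018} to the present setting: using the interpolation property of the Mazur--Kitagawa two-variable $L$-symbol $\cL_p^+(\m)$ of Section \ref{sec:pL}, one relates $L^+_p(\m,\omega^0,c)$ to a special value of the one-variable $p$-adic $L$-function of each classical $f_{k,i}$, which by its Euler factor at $p$ equals $a_p(f_{k,i})-1$ times a factor that hypothesis \ref{mazur2} forces to be a $p$-adic unit; Nakayama then lifts this to an equality in $\Tm$. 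Once \eqref{part:U_p-1 gens 1} is in hand, Lemma \ref{lem: lam(c) generates} also delivers the unit property of $L^+_p(\m,\omega^0)^0_e$, and part \eqref{part:U_p-1 gens 2} follows by pushing the factorization to $\sO_{f_{k,i}}\lb u\rb$ along the prime $\p_{f_{k,i}}$: the image is $(a_p(f_{k,i})-1)\cdot v_{f_{k,i}}$ with $v_{f_{k,i}}$ a unit, from which the $\mu$- and $\lambda$-invariants can be read off directly.

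For \eqref{part:U_p-1 gens 3}, I apply Lemma \ref{lem:mu_add}\eqref{part:mu_add1} to the fixed-weight setup coming from $k\colon\Lambda\to\Z_p$ with $t=U_p-1$:
\[
\sum_{i=1}^{r_k}\mu(L^+_p(f_{k,i},\omega^0)) = \mathrm{val}_p(F_k(0)),
\]
where $F(X)\in\Lambda[X]$ is the characteristic polynomial of multiplication by $U_p-1$ on $\Tmo$. By Lemma \ref{lem:structure of T}, $F(0)\Lambda = \mathrm{Eis}(T_0)\Lambda$, and by Section \ref{sec:eis} we have $\mathrm{Eis}(T_0) = \zeta_{p,k_0}\cdot(1-\langle N\rangle^{1/2})$. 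Hypothesis \ref{mazur2} implies that $\zeta_{p,k_0}\in\Lambda^\times$, so its weight-$k$ specialization is a $p$-adic unit, while the $p$-adic logarithm (valid since $N\equiv 1\pmod p$) gives
\[
\mathrm{val}_p(1-N^{k/2}) = \mathrm{val}_p(k/2)+\mathrm{val}_p(N-1) = \mathrm{val}_p(k)+\mathrm{val}_p(N-1),
\]
establishing \eqref{part:U_p-1 gens 3}. Part \eqref{part:U_p-1 gens last} is then immediate from Lemma \ref{lem:mu_add}\eqref{part:mu_add2}. Finally, under the additional Merel-type hypothesis, Theorem \ref{thm:rank1} forces $\rank_\Lambda\Tmo=1$ so that $r_k=1$ for every $k$, and the stated formula is just the single-term instance of \eqref{part:U_p-1 gens 3}.
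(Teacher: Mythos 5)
Your proposal is correct and follows essentially the same route as the paper: reduce to the fact that $U_p-1$ generates $\I_\m$ via Theorems \ref{thm:principal} and \ref{thm:U_p-1}, verify the hypothesis of Lemma \ref{lem: lam(c) generates} at the trivial character by transplanting the argument of \cite[Theorem 3.15]{BP2018}, and then deduce parts (1)--(4) from Lemmas \ref{lem: lam(c) generates} and \ref{lem:mu_add}, with $\E^-(T_0)=\zeta_{p,k_0}\cdot(1-\langle N\rangle^{1/2})$ playing the role of the Kubota--Leopoldt series and hypothesis (a) making the $\zeta_{p,k_0}$ factor a unit. The valuation computation $\mathrm{val}_p(1-N^{k/2})=\mathrm{val}_p(N-1)+\mathrm{val}_p(k)$ and the final appeal to Theorem \ref{thm:rank1} match the paper exactly.
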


\begin{proof}
By Theorem \ref{thm:U_p-1}, the assumptions (a)-(c) imply that $U_p-1$ generates $\Im$. 
The proof of parts \eqref{part:U_p-1 gens 1}-\eqref{part:U_p-1 gens last} works verbatim as in the proof of Theorem \ref{thm: U_p-1 gens}, except that the role of the Kubota--Leopold series $L_p(\psi^{-1}_\m,\kappa)$ is played by $\E^-(T_0)$, which equals $\zeta_{p,k_0}(\kappa)(1-N^{\kappa/2})$. In particular,  for every weight $k$,
\begin{align*}
\mathrm{val}_\varpi(\E^-(T_0)|_{\kappa=k})&=\mathrm{val}_\varpi(\zeta_{p,k_0}(k)) +\mathrm{val}_\varpi(1-N^{k/2}) \\
&=\mathrm{val}_\varpi(N-1) + \mathrm{val}_\varpi(k),
\end{align*}
because $\mathrm{val}_\varpi(\zeta_{p,k_0}(k))=0$ by \ref{mazur2}. The last claim follows from Theorem \ref{thm:rank1} and~\eqref{part:U_p-1 gens 3}.
\end{proof}

\begin{rem}
We note that the relative advantage of Theorem \ref{thm:Mazur U_p-1 gens} over Theorem \ref{thm: U_p-1 gens} is that all of the hypotheses are simple numerical criteria that can be verified to be true in any given case.
\end{rem}

\subsection{Conjectures and evidence}

The below is a re-statement of Conjecture \ref{conj:mu conj} but now in the setting of this section.

\begin{conj}\label{conj:Mazur mu conj}
~\begin{enumerate}
\item Suppose that $\bT_\m$ is Gorenstein. For each {\rob even} $j$, there is an equality $$\mathrm{content}_{\bT_\m}(L^+_p(\m,\omega^j)) =\mathcal{I}_\m.$$
\item  Suppose that $\Tmo$ is Gorenstein with $H^1_\Lambda(X_1(N))^+_\m$ generated by an element $e$ as a $\Tmo$-module.
Then $L^+_p(\m,\omega^j)^{0}_e$ has unit content {\rob for $j$ even}.
\end{enumerate}
\end{conj}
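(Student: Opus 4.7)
The plan is to adapt the strategy used in the primitive case (Section \ref{sec:primitive}), particularly the equivalences established in Proposition \ref{prop: conjs are equiv}. Since the axiomatic Lemmas \ref{lem: equivalences of unit content} and \ref{lem: mu ineguality} apply to any $L$-symbol setup, the exact analogue of Proposition \ref{prop: conjs are equiv} should go through verbatim in Mazur's setting: when $\I_\m$ is principal, parts (1) and (2) of the conjecture are equivalent, and both reduce to showing the non-vanishing of $\mu(L^+_p(\p,\omega^j)^0_e)$ at each height-one prime $\p \subset \bT_\m^0$. Moreover, under the hypotheses of Theorem \ref{thm:Mazur U_p-1 gens}, part \eqref{part:U_p-1 gens 1} of that theorem states that $L^+_p(\m,\omega^0)^{0}_e$ is literally a unit, so both halves of the conjecture hold for $j=0$ whenever $U_p-1$ generates $\I_\m$.

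The first new obstacle is extending to even $j \neq 0$, still under the assumption that $\I_\m$ is principal. The identification of $L^+_p(\m,\omega^0)$ evaluated at the trivial character with $U_p-1$ (up to a unit) is specific to the $\omega^0$-branch and follows from a direct interpolation computation. A natural strategy is to find, for each even $j$, a $\bT_\m$-algebra homomorphism $c: \bT_\m \lb u \rb \to \bT_\m$ such that $L^+_p(\m,\omega^j,c)$ equals a generator of $\I_\m$; then Lemma \ref{lem: lam(c) generates} would immediately yield the unit-content statement (2) and, via the analogue of Proposition \ref{prop: conjs are equiv}, statement (1). Concretely, one would specialize at a classical arithmetic point, use the interpolation formula to match the specialization against a product of twisted $L$-values and Euler factors, and verify that this product lies in $\I_\m$ but not in $\m\cdot\I_\m$. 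The difficulty here is essentially analytic: one needs an unconditional non-vanishing statement for the relevant twisted special values modulo $p$.

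The main obstacle, which is the real point of the conjecture, is the case where $\I_\m$ is not principal. By Lemma \ref{lem: both goren}, exactly one of $\bT_\m, \bT_\m^0$ is Gorenstein in this case, so parts (1) and (2) are genuinely independent statements rather than equivalent ones, and the direct divisibility $L^+_p(\m) = L^+_p(\m)^{0}_{t\zinf} \cdot t$ from Theorem \ref{thm:Mazur case}(3) is unavailable. A plausible strategy for part (1) would use a presentation of $\I_\m/\m\cdot\I_\m$ by cocycle classes via the deformation-theoretic machinery of \cite{PG3,PG5}, and then attempt to match a dual basis of this quotient against specializations of $L^+_p(\m,\omega^j)$ at classical forms, invoking a main-conjecture-type statement to transfer information from the algebraic side. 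For part (2), symmetrically, one needs to exhibit a single height-one prime of $\bT_\m^0$ at which the cuspidal $L$-function has $\mu$-invariant zero, which in the absence of a generator-of-$\I_\m$ argument requires a genuinely new input. Both directions appear to require substantial analytic or Galois-theoretic machinery well beyond the commutative-algebra bookkeeping that suffices in the principal case, and I would expect progress here to come only after the algebraic side of Iwasawa theory (mentioned in the ``Further questions'' section) is developed in parallel.
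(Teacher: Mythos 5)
This statement is a conjecture, and your assessment of its status matches the paper's exactly: the authors themselves only establish the $j=0$ case when $U_p-1$ generates $\I_\m$ (via Theorem \ref{thm:Mazur U_p-1 gens} and the Mazur-case analogue of Proposition \ref{prop: conjs are equiv}), and leave the remaining cases as genuinely open, supported only by numerical evidence. One minor correction: when $\I_\m$ is not principal, Lemma \ref{lem: both goren} only tells you that \emph{at most} one of $\bT_\m$, $\bT_\m^0$ is Gorenstein, not exactly one.
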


If $\Im$ is principal then both $\Tm$ and $\Tmo$ are Gorenstein and the hypotheses of both parts of the above conjecture are satisfied.  When $U_p-1$ generates $\Im$, then the $j=0$ case of this conjecture follows from Theorem \ref{thm:Mazur U_p-1 gens}.  

We ran numerical tests using Sage \cite{sagemath} of this conjecture analogous to what was done in \cite[Section 3.9]{BP2018}.  Namely, for all primes $N<80$, we ran through all primes $p\geq 5$ such that $N \equiv 1 \pmod{p}$.  For each such pair $(N,p)$, we considered $2 \leq k_0 \leq p-3$ and when the corresponding $\Tmo$ had rank 1, we verified the above conjecture for all values of $j$.
The assumption of the rank 1 of $\Tmo$ is needed because the method of computation involved iterating $U_p$ on overconvergent modular symbols which converges when there is a unique form in the Hida family in each weight $k$.  This assumption on the rank of $\Tmo$ also implies that both $\Tmo$ and $\Tm$ are Gorenstein and so $\Im$ is principal in this case.
In all, we verified the conjecture (for all allowable values of $j$) for 35 eigenforms.

\appendix

\section{Hida theory with tame Atkin-Lehner involutions}
\label{appendix:Hida}
We consider a variant of the Hida Hecke algebra with operators $w_\ell$ at primes $\ell$ dividing the tame level rather than $U_\ell$. Let $N=p\ell_1\dots\ell_r$ be squarefree, and assume $p>3$.

\subsection{Review of Hida theory} In this section, we review Hida theory. All of these results are well-known, see, for example, \cite{hida1986}, \cite{hida1986a}, \cite[Chapter 7]{hida1993}, \cite{wiles1988}, \cite[Section 1.5]{FK2024}.

\subsubsection{Classical modular forms} For $r \ge 0$ let $\Gamma_r$ denote the congruence subgroup $\Gamma_0(N) \cap \Gamma_1(p^r)$. For a ring $R$, let $M_k(\Gamma_r,R)$ and $S_k(\Gamma_r,R)$ denote the spaces of modular forms and cusp forms, respectively, of weight $k$ and level $\Gamma_r$,  with coefficients in $R$. There is an injective $q$-expansion map
\[
\mathrm{expand}_q: M_k(\Gamma_r,R) \to R\lb q \rb 
\]
which we denote by $f \mapsto \sum_n a_n(f)q^n$. For $f \in S_k(\Gamma_r,R)$ we have $a_0(f)=0$. We define
\[
m_k(\Gamma_r,R) = \{f \in M_k(\Gamma_r,Q(R)) \ : \ a_n(f) \in R \text{ for all } n>0\}
\]
where $Q(R)$ is the localization $S^{-1}R$ where $S$ is the set of non-zero-divisors of $R$. 

There are Hecke operators $T_n$ for $(n,N)=1$ and $U_\ell$ for $\ell \mid N$ as well as the Atkin-Lehner involutions $w_\ell$ for $\ell \mid N$ and diamond operators $\dia{n}$ for $(n,N)=1$. These all act on $m_k(\Gamma_r,R)$ and preserve the submodules $M_k(\Gamma_r,R)$ and $S_k(\Gamma_r,R)$. We let
\[
\fH'_{r,k} \subset \End_{\Z_p}(m_k(\Gamma_r,\Z_p)), \ \h'_{r,k} \subset \End_{\Z_p}(S_k(\Gamma_r,\Z_p))
\]
be the $\Z_p$-subalgebras generated by the diamond operators and $T_n$ for $(n,N)=1$ as well as $U_\ell$ for $\ell | N$. These are commutative algebras, and the subject of Hida theory.

We let
\[
\fH_{r,k} \subset \End_{\Z_p}(m_k(\Gamma_r,\Z_p)), \ \h_{r,k} \subset \End_{\Z_p}(S_k(\Gamma_r,\Z_p))
\]
be the $\Z_p$-subalgebras generated by the diamond operators and $T_n$ for $(n,N)=1$ as well as $U_p$ and $w_\ell$ for $\ell | \frac{N}{p}	$. These are commutative algebras, and are the main focus in this paper.

\begin{lem}
\label{lem:classical duality}
Let $M$ be $m_k(\Gamma_r,\Z_p)$ or $S_k(\Gamma_r,\Z_p)$, and let $H$ be $\fH'_{r,k}$ or $\h'_{r,k}$, respectively. Then $M$ and $H$ are free $\Z_p$-modules of finite rank and the pairing
\[
M \times H \to \Z_p
\]
given by $(f,T) \mapsto a_1(Tf)$ is perfect.
\end{lem}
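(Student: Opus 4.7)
The plan is to prove this standard Hida duality result in three stages. First I would establish freeness and finite generation of $M$ and $H$; then I would prove non-degeneracy on both sides of the pairing via a standard Hecke identity; finally I would upgrade non-degeneracy to perfectness by a descent from $\Q_p$ to $\Z_p$.

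For freeness, the key observation is that the $q$-expansion map yields $\Z_p$-torsion-freeness, while the space of modular forms over $\Q_p$ is finite-dimensional. For $S_k(\Gamma_r,\Z_p)$, this is immediate: it sits inside $\Z_p\lb q \rb$ as a $\Z_p$-lattice in $S_k(\Gamma_r,\Q_p)$, so it is free of finite rank. For $m_k(\Gamma_r,\Z_p)$ the hypothesis $k\ge 2$ is essential, since there are no non-zero constant modular forms of positive weight, so a form is determined by its coefficients $a_n(f)$ for $n\ge 1$; this gives an embedding of $m_k(\Gamma_r,\Z_p)$ into $\Z_p\lb q \rb$, realizing it as a finitely generated $\Z_p$-submodule of the finite-dimensional $M_k(\Gamma_r,\Q_p)$, hence free. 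The Hecke algebra $H$ is a $\Z_p$-submodule of the free, finite-rank $\Z_p$-module $\End_{\Z_p}(M)$, so it is automatically torsion-free and finitely generated over the Noetherian ring $\Z_p$.

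For non-degeneracy, the technical heart is the identity that for $n\ge 1$, writing $n = m\prod_{\ell\mid N}\ell^{a_\ell}$ with $(m,N)=1$, the operator $T_n':=T_m\prod_{\ell\mid N}U_\ell^{a_\ell}\in H$ satisfies $a_1(T_n'f)=a_n(f)$. If $a_1(Tf)=0$ for every $T\in H$, then $a_n(f)=a_1(T_n'f)=0$ for every $n\ge 1$, forcing $f=0$ by the freeness argument above. Conversely, if $a_1(Tf)=0$ for every $f\in M$, then using commutativity of $H$, $a_n(Tf) = a_1(T_n'Tf) = a_1(TT_n'f) = 0$ for all $n\ge 1$ and all $f$, so $Tf=0$ for every $f$, whence $T=0$ in $\End_{\Z_p}(M)\supset H$. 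Both induced maps $M\to\Hom_{\Z_p}(H,\Z_p)$ and $H\to\Hom_{\Z_p}(M,\Z_p)$ are therefore injective.

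For the final step, extending scalars to $\Q_p$ turns these injections into maps of finite-dimensional $\Q_p$-vector spaces; since both maps are still injective, dimension comparison forces them to be isomorphisms, so the pairing is perfect over $\Q_p$. To descend to $\Z_p$, given $\phi\in\Hom_{\Z_p}(H,\Z_p)$, $\Q_p$-perfectness yields a unique $f\in M\otimes_{\Z_p}\Q_p$ with $a_1(Tf)=\phi(T)$ for all $T$. Integrality of $\phi$ then forces $a_n(f) = \phi(T_n')\in\Z_p$ for all $n\ge 1$, so $f\in M$ (in the $S_k$ case $a_0(f)=0$ is automatic). Hence $M\to\Hom_{\Z_p}(H,\Z_p)$ is surjective, and the corresponding map for $H$ follows by $\Z_p$-duality of free modules of finite rank. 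The main obstacle, such as it is, lies in the freeness of $m_k(\Gamma_r,\Z_p)$: the unrestricted constant term makes this less automatic than for $S_k$, but the $k\ge 2$ assumption resolves this cleanly.
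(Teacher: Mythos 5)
Your proof is correct. Note that the paper does not actually prove Lemma \ref{lem:classical duality}: it appears in the ``Review of Hida theory'' subsection and is cited as well known (Hida, Wiles, Fukaya--Kato), so there is no in-paper argument to compare against. What you have written is the standard proof, and all the essential points are in place: the identity $a_1(T'_nf)=a_n(f)$ with $T'_n=T_m\prod_\ell U_\ell^{a_\ell}$, commutativity of $H$ to get non-degeneracy on the Hecke side, and the integrality descent $a_n(f)=\phi(T'_n)\in\Z_p$ to upgrade $\Q_p$-perfectness to $\Z_p$-perfectness. Two very minor glosses: (i) for finite generation of $m_k(\Gamma_r,\Z_p)$ you should say one more word --- embedding into $\Z_p\lb q\rb$ plus lying in a finite-dimensional space does not by itself give finite generation; pick finitely many coefficient functionals $a_{n_1},\dots,a_{n_d}$ that are linearly independent on the (finite-dimensional) image, so that $m_k(\Gamma_r,\Z_p)$ injects into $\Z_p^d$; (ii) the last descent step for $S_k$ implicitly uses that integrality of the $q$-expansion at $\infty$ characterizes $S_k(\Gamma_r,\Z_p)$, which is the convention in force here. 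It is worth observing that your argument leans entirely on having $U_\ell\in H$ for every $\ell\mid N$, so that $T'_n$ exists for all $n\ge 1$; this is exactly what fails for the modified algebras $\fH_{r,k}$ with $w_\ell$ in place of $U_\ell$, which is why the paper's Proposition \ref{prop:Atkin-Lehner duality} requires the genuinely different input of Lemma \ref{lem:ohta Atkin-Lehner} and only yields perfectness in restricted cases.
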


\subsubsection{Cohomology} Let $Y(\Gamma_r)$ be the modular curve with level $\Gamma_r$ and let $X(\Gamma_r)$ be its compactification. For $k \ge 2$, let $\sF_k$ denote the twisted constant $p$-adic \'etale sheaf on $Y(\Gamma_r)$ associated to the representation $\mathrm{Sym}^{k-2} \mathrm{Std}$ of $\mathrm{GL}_2$, and denote by the same letter its pushforward to $X(\Gamma_r)$. 
Consider the cohomology groups $H^1(r,k), H^1_c(r,k), H^1_P(r,k)$ defined as follows
\[
H^1(r,k):=H^1(Y(\Gamma_r),\sF_k), \  H^1_c(r,k):=H^1_c(Y(\Gamma_r),\sF_k), \ H^1_P(r,k):=H^1(X(\Gamma_r),\sF_k).
\]

By Eichler-Shimura theory, the algebras $\fH_{r,k}'$ and $\fH_{r,k}$ act on $H^1_\dagger(r,k)$ for any $\dagger \in \{\emptyset,c,P\}$, faithfully if $\dagger \in  \{\emptyset,c\}$, and factoring exactly through $\h_{r,k}'$ and $\h_{r,k}$, respectively, if $\dagger=P$.
\begin{lem}
\label{lem:compat of w_ell}
For any $k\ge 2$, any $r'\ge r \ge 0$, and any $\dagger \in \{\emptyset,c,P\}$, the trace maps
\[
H^1_\dagger(r',k) \to H^1_\dagger(r,k)
\]
commute with the actions of $T_\ell$ and $\dia{\ell}$ for any $\ell \nmid N$, $U_\ell$ for any $\ell \mid N$, and $w_\ell$ for any $\ell \mid\mid N$.
\end{lem}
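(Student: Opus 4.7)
The plan is to give a moduli-theoretic (or double-coset) description of each Hecke operator as a correspondence
\[
(p_1, p_2): Z_r \rightrightarrows Y(\Gamma_r),
\]
and similarly at level $r'$, and then to check that the level-$r$ and level-$r'$ correspondences fit into a Cartesian square
\[
\xymatrix{
Z_{r'} \ar[r]^-{p_i'} \ar[d]_-{\pi_Z} & Y(\Gamma_{r'}) \ar[d]^-{\pi} \\
Z_{r} \ar[r]^-{p_i} & Y(\Gamma_r)
}
\]
for both $i=1,2$, where $\pi=\pi_{r',r}$ is the forgetful map. Given this Cartesian property, proper base change (valid because $\pi$ is finite étale) yields $p_1^* \pi_* = \pi_{Z,*}(p_1')^*$ on the cohomology of $\sF_k$, and hence
\[
T \pi_* = (p_2)_* p_1^* \pi_* = (p_2)_* \pi_{Z,*} (p_1')^* = \pi_* (p_2')_* (p_1')^* = \pi_* T'.
\]
The same argument applies to $H^1_c$ and $H^1_P$ via the standard functoriality of $\pi_*$ on the compactifications (and the boundary exact sequences relating them).

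For $T_\ell$, $\dia{\ell}$ with $\ell \nmid N$ and for $U_\ell$ with $\ell \mid N$ and $\ell \ne p$, the Cartesian property is clear from the moduli description: the correspondence acts only on the $\ell$-part of the data, which is orthogonal to the $p^r$-level structure that distinguishes $\Gamma_{r'}$ from $\Gamma_r$. For $U_p$, the Cartesian property is the classical input to Hida theory and is verified by an explicit double-coset computation for the matrix $\sm{1}{0}{0}{p}$. For the new case $w_\ell$ with $\ell \mid\mid N$ and $\ell \ne p$, I would describe the involution moduli-theoretically as
\[
(E, C_N, \alpha_{p^r}) \mapsto \bigl(E/C_\ell,\ (C_N + E[\ell])/C_\ell,\ \tilde\alpha_{p^r}\bigr),
\]
where $C_\ell \subset C_N$ is the cyclic subgroup of order $\ell$ and $\tilde\alpha_{p^r}$ is transported through the isogeny $E \to E/C_\ell$, which is an isomorphism on $p^r$-torsion since $\ell \ne p$. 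The formula at level $r'$ is identical with $p^{r'}$ in place of $p^r$, and the two rules are manifestly intertwined by $\pi$, producing the required Cartesian square.

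The main obstacle lies precisely in the $w_\ell$ case: one must verify that the $\ell$-isogeny $E \to E/C_\ell$ interacts properly with the $p$-level structure. This hinges on $\ell \ne p$, which makes the isogeny étale at $p$ and produces a canonical isomorphism on $p^r$-torsion used to transport $\alpha_{p^r}$; the remaining bookkeeping (matching diamond operators, checking the rules descend to the compactification) is then routine. The hypothesis $\ell \mid\mid N$ ensures that the unique $\ell$-piece $C_\ell$ of $C_N$ is well-defined, so that the moduli recipe above makes sense unambiguously at both levels $r$ and $r'$.
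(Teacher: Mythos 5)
Your proposal is correct, but it argues in a different language from the paper. The paper stays entirely group-theoretic: it cites Ohta's Lemma 7.4.1 for $T_\ell$, $\dia{\ell}$ and $U_\ell$, notes that the same reduction applies to $w_\ell$ (compatibility with the trace map reduces to compatibility with the restriction map $H^1_\dagger(r,k)\to H^1_\dagger(r',k)$), and then observes that the double coset defining $w_\ell$ at level $Np^{r'}$ and at level $Np^r$ can be represented by literally the same matrix $W_{\ell,M}=\ttmat{\ell x}{y}{Mz}{\ell w}$ of determinant $\ell$, so the two operators are visibly intertwined. You instead phrase everything in terms of moduli-theoretic correspondences, Cartesian squares over the forgetful map, and (proper) base change for the finite map $\pi$; for $w_\ell$ your key point --- that the $\ell$-isogeny $E\to E/C_\ell$ is prime to $p$ and hence transports the $p^r$-level structure canonically, uniformly in $r$ --- is the geometric incarnation of the paper's observation that one matrix serves at all levels. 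Both treatments quarantine the genuinely delicate case, $U_p$, by deferring to the classical computation (Ohta / Hida); your appeal to "the classical input to Hida theory" is doing the same work as the paper's citation, and should in a final write-up be replaced by an explicit reference, since the trace-compatibility of $U_p$ is the one case where the level-$r'$ and level-$r$ coset decompositions must actually be matched up rather than being identical for trivial reasons. The geometric version buys a cleaner conceptual picture (and makes the role of $\ell\ne p$ transparent); the double-coset version is shorter and plugs directly into the group-cohomological framework Ohta uses for the rest of the theory.
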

\begin{proof}
For $T_\ell$ and $U_\ell$, this is proven in \cite[Lemma 7.4.1]{ohta1993}, and essentially the same proof works for $w_\ell$. Indeed, just as in that proof, it is enough to that $w_\ell$ commutes with the natural map
\[
H^1_\dagger(r,k) \to H^1_\dagger(r',k)
\]
for $\dagger\in \{\emptyset,c\}$. This commutivity is clear because on both spaces $w_\ell$ is given by the same double coset operator. Explicitly, at any level $M$ with $\ell||M$, $w_\ell$ is the double coset operator associated to any matrix $W_{\ell,M}$ of the form
\[
W_{\ell,M}=\ttmat{\ell x}{y}{Mz}{\ell w}
\]
such that $\det(W_{\ell,M})=\ell$ (the operator is independent of the choice of $W_{\ell,M}$). We see that any choice of $W_{\ell,Np^{r'}}$ is also a valid choice for $W_{\ell,Np^r}$.
\end{proof}

\subsubsection{Hida theory} For a $\fH_{r,k}$-module or $\fH_{r,k}'$-module $M$, let $M^\ord$ denote the largest direct summand on which $U_p$ acts invertibly. For $k$ fixed, let
\[
\fH'^\ord_k = \varprojlim_{r \ge 0} \fH'^\ord_{r,k}, \ \h'^\ord_k = \varprojlim_{r \ge 0} \h'^\ord_{r,k}
\]
where the transition maps send $T_q$ and $U_\ell$ to the operator with the same name (this is well-defined by Lemma \ref{lem:compat of w_ell}). These are algebras over the Iwasawa algebra $\Lambda = \Z_p \lb \Z_p^\times \rb$, via the diamond operator action.  For $a=1, \dots, p-1$, and any $\Lambda$-module, $M$, let $M^{(a)}$ denote the direct summand where the torsion subgroup of $\Z_p^\times$ acts by the $a$-th power of the Teichemuller character. Identify $\Lambda^{(a)}$ with $\Z_p \lb 1+p\Z_p \rb$, and, for any $k \equiv a \pmod{p}$, let $\omega_{r,k} = [1+p]^{p^r}-(1+p)^{(k-2)p^r} \in \Lambda^{(a)}$.

Let
\[
m_{k,\Lambda}^\ord = \varprojlim_{r \ge 0} m_k(\Gamma_r,\Z_p)^\ord, \ S_{k,\Lambda}^\ord = \varprojlim_{r \ge 0} S_k(\Gamma_r,\Z_p)^\ord
\]
where the transition maps are the trace maps. These are faithful modules for $\fH'^\ord_k$ and $\h'^\ord_k$, respectively. The maps
\[
m_k(\Gamma_r,\Z_p)^\ord \to \Q_p[(\Z/p^r\Z)^\times]\lb q \rb
\]
given by
\[
f \mapsto \sum_{a \in (\Z/p^r\Z)^\times} (\mathrm{expand}_q(\dia{a}^{-1}U_p^rw_{Np^r}(f)) [a]
\]
induce injective $\Lambda$-module homomorphisms
\[
m_{k,\Lambda}^\ord \to Q(\Lambda) + q \Lambda\lb q \rb, \ S_{k,\Lambda}^\ord \to q \Lambda\lb q \rb,
\]
and we identify these modules with there images in $Q(\Lambda)\lb q \rb$. The following is known as Hida's control theorem.

\begin{thm}
\label{thm:classical hida theory}
The $\Lambda$-modules $m_{k,\Lambda}^\ord,S_{k,\Lambda}^\ord,\fH'^\ord_k$ and $\h'^\ord_k$ are independent of the integer $k\ge2$. We may and do drop the subscript $k$ from the notation, and denote these objects simply by $m_{\Lambda}^\ord,S_{\Lambda}^\ord,\fH'^\ord$ and $\h'^\ord$. 

Let $M$ be $m_{\Lambda}^\ord$ or $S_{\Lambda}^\ord$ and let $H$ be $\fH'^\ord$ or $\h'^\ord$, respectively. Let $M_{r,k}$ and $H_{r,k}$ be the fixed weight and level versions. We have:
\begin{enumerate}
\item $M$ and $H$ are free $\Lambda$-modules of finite rank.
\item The pairing 
\[
M \times H \to \Lambda,
\]
given by $(f,T) \mapsto a_1(Tf)$, is perfect.
\item For any $k\ge 2$ and $r\ge0$, the natural maps
\[
M/\omega_{r,k}M \to M_{r,k}, \ H/\omega_{r,k}H \to H_{r,k}
\]
are isomorphisms.
\end{enumerate}
\end{thm}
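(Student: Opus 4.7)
The plan is to follow the classical Hida strategy of \cite{hida1986,hida1986a,hida1993,wiles1988}: establish the control theorem (3) as the main technical input, from which the remaining claims follow. The starting point is Lemma \ref{lem:classical duality}, which provides a perfect $\Z_p$-pairing between $M_{r,k}$ and $H_{r,k}$ at each fixed level and weight; since the ordinary projector is an idempotent commuting with all Hecke operators, this descends to a perfect $\Z_p$-pairing on the ordinary direct summands.

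First, I would prove the control theorem at a fixed weight $k$: the natural maps $m_{k,\Lambda}^\ord / \omega_{r,k} m_{k,\Lambda}^\ord \to m_k(\Gamma_r,\Z_p)^\ord$ and $\fH'^\ord_k / \omega_{r,k} \fH'^\ord_k \to \fH'^\ord_{r,k}$ are isomorphisms, and similarly for cuspforms. The central input is the well-known computation that, on ordinary parts, the composition of the trace $m_k(\Gamma_{r+1},\Z_p) \to m_k(\Gamma_r,\Z_p)$ with the natural degeneracy map differs from $U_p$ by a unit. This ``inverted by $U_p$'' phenomenon splits the transition maps on the ordinary side, so the inverse limit $m_{k,\Lambda}^\ord$ is identified with $m_k(\Gamma_r,\Z_p)^\ord$ modulo $\omega_{r,k}$ by matching $q$-expansions. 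Dualizing through Lemma \ref{lem:classical duality} transfers the conclusion to the Hecke side.

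Next, the control theorem yields freeness as follows. For any $k\ge 2$ and $r\ge 0$, the quotient $\fH'^\ord_k / \omega_{r,k}$ is a free $\Z_p$-module of finite rank, and, since the ideals $(\omega_{r,k})$ are cofinal in the maximal ideal of $\Lambda^{(a)}$ for $k\equiv a\pmod{p-1}$, $\fH'^\ord_k$ is a complete $\Lambda^{(a)}$-module. A standard Nakayama argument, together with the fact that $\omega_{r,k}$ is a regular element of $\Lambda^{(a)}$, shows that $\fH'^\ord_k$ is finitely generated and flat over $\Lambda^{(a)}$, hence free of finite rank. The same applies to $\h'^\ord_k$. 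Taking the inverse limit of the perfect $\Z_p$-pairings between $M_{r,k}^\ord$ and $H_{r,k}^\ord$ and using freeness gives the perfect $\Lambda$-pairing claimed in (2); in particular $M$ is $\Lambda$-free. Weight-independence is then deduced by comparing $\fH'^\ord_k$ and $\fH'^\ord_{k'}$ for $k\equiv k'\pmod{p-1}$: twisting by a $\Lambda$-valued character of the diamond operators converts one weight to the other, giving an isomorphism compatible with all the generating Hecke operators, and this is witnessed concretely in the $q$-expansions of $\Lambda$-adic forms.

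The main obstacle is the control theorem itself, and in particular the exact shape of the kernel of $m_k(\Gamma_{r+1},\Z_p)^\ord \to m_k(\Gamma_{r},\Z_p)^\ord$. This requires a careful computation showing that the kernel is precisely $\omega_{r,k} \cdot m_k(\Gamma_{r+1},\Z_p)^\ord$, which is the point where the specific choice $\omega_{r,k} = [1+p]^{p^r} - (1+p)^{(k-2)p^r}$ enters through the action of the diamond operators on forms of weight $k$. Everything else is formal manipulation once this input is granted, and the analogous result with $w_\ell$ in place of $U_\ell$ is deferred to the remainder of the appendix.
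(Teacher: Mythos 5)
The paper does not prove this theorem: it is stated in the appendix as a \emph{review} of classical Hida theory, with the proof delegated entirely to the cited literature (Hida's original papers, Wiles, and Fukaya--Kato). So there is no in-paper argument to compare yours against; what you have written is a sketch of the classical proof of a result the authors deliberately black-box. As such a sketch, your outline follows the standard strategy and is broadly sound, but be aware that it is a proof plan rather than a proof: essentially all of the mathematical content is concentrated in the step you defer, namely the exact ``contraction'' property of $U_p$ on the ordinary tower. Two points deserve more care than your sketch gives them. First, the slogan ``trace composed with the degeneracy map differs from $U_p$ by a unit'' is not literally correct as stated --- the composite of the inclusion $m_k(\Gamma_r)\to m_k(\Gamma_{r+1})$ with the trace is multiplication by the index $p$; the correct statement passes through the intermediate level $\Gamma_0(p^{r+1})\cap\Gamma_1(p^r)$ and a Gouv\^ea-type lemma (the analogue of Lemma \ref{lem:gouvea}), and what one actually proves is that the ordinary part is free over the group ring $\Z_p[(\Z/p^r\Z)^\times]$ of diamond operators, from which both the identification of the kernel of the trace with $\omega_{r,k}$ times the module and the $\Lambda$-freeness in (1) follow simultaneously. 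Second, weight-independence of the Hecke algebra is usually established either through the cohomological model (where the weight only enters through the coefficient sheaf, trivializable up the tower) or through congruences with weight-$2$ forms of higher level; your ``twist by a character of the diamond operators'' route can be made to work with the paper's normalization of the $q$-expansion embedding into $Q(\Lambda)\lb q\rb$, but one must check that the twist intertwines $T_\ell$ with $T_\ell$ rather than with $\ell^{k'-k}T_\ell$, which is exactly what that normalization is designed to arrange. Given that the paper itself treats the theorem as standard, citing the references (as the authors do) is the appropriate resolution; if you want to supply a proof, the two points above are where the real work lies.
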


\subsection{Hida theory with Atkin-Lehner operators} In this section, we prove the analog of the control theorem for the algebras $\fH_{r,k}^\ord$ and $\h_{r,k}^\ord$.

For an element $\epsilon=(\epsilon_1,\dots,\epsilon_r) \in \{\pm 1\}^r$ and an $\fH_{r,k}$-module $M$, let $M^\epsilon$ denote the summand of $M$ on which $w_{\ell_i}$ acts by $\epsilon_i$ for $i=1,\dots, r$.

\subsubsection{Atkin-Lehner theory, and reducing from level $N$ to level $N/p$} We will frequently make use of the following result, which is a variant of Atkin-Lehner theory (c.f.~\cite[Theorem 1]{AL1970}) that allows for more general coefficient rings. It was first proven by Mazur \cite[Lemma II.5.9, pg.~83]{mazur1978} in the case $M>5$ is prime, and in the general case by Ohta \cite[Corollary (2.1.4)]{ohta2014}.
\begin{lem}
\label{lem:ohta Atkin-Lehner} Let $M$ be an integer and let $f \in M_k(\Gamma_0(M),R)$ with $R$ a $\Z[1/2M]$-algebra, and suppose that $f$ is an eigenform for all $w_\ell$ with $\ell|M$ and that $a_n(f)=0$ for all $(n,M)=1$. Then $f$ is constant.
\end{lem}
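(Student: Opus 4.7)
The plan is to reduce the statement over a general $\Z[1/2M]$-algebra $R$ to the classical situation $R = \C$ (plus the mod-$p$ case for $p \nmid 2M$) by base change, and then to invoke Atkin--Lehner newform theory. First I would observe that, since $2M$ is invertible in $R$, the space $M_k(\Gamma_0(M), \Z[1/2M])$ is a finitely generated free $\Z[1/2M]$-module whose formation commutes with base change (standard Deligne--Rapoport theory, once one rigidifies the $\Gamma_0(M)$-moduli problem away from $2M$). Let $V_\epsilon \subseteq M_k(\Gamma_0(M), \Z[1/2M])$ denote the $\Z[1/2M]$-submodule of forms $g$ satisfying $(w_\ell - \epsilon_\ell)g = 0$ for every $\ell \mid M$ together with $a_n(g) = 0$ for all $(n,M)=1$. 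The hypotheses of the lemma put $f \in V_\epsilon \otimes_{\Z[1/2M]} R$, so it is enough to show that $V_\epsilon$ is contained in the submodule of constants $\Z[1/2M]\cdot 1$. Since $V_\epsilon/(\Z[1/2M]\cdot 1)$ is a finitely generated $\Z[1/2M]$-module, this containment can be checked after $\otimes \C$ and after $\otimes \F_p$ for each prime $p\nmid 2M$.

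Over a field of characteristic zero or of residue characteristic coprime to $2M$, I would decompose $f = f_{\mathrm{cusp}} + f_{\mathrm{Eis}}$ and treat the two pieces separately. The cuspidal part satisfies $a_n(f_{\mathrm{cusp}})=0$ for $(n,M)=1$, so by Atkin--Lehner's theorem it lies in the span of oldforms $g(dz)$ with $g$ a cuspidal newform of level $M' \mid M$ (with $M' \neq M$) and $d \mid M/M'$. The requirement that $f_{\mathrm{cusp}}$ be a simultaneous $w_\ell$-eigenform for every $\ell \mid M$, via the explicit action of $w_\ell$ on the raising operators $g(z) \mapsto g(\ell z)$, then becomes a linear system whose only solutions with $a_n(f_{\mathrm{cusp}}) = 0$ for $(n,M)=1$ vanish. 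A parallel analysis of the Eisenstein space, which decomposes similarly into push-ups of level-$1$ Eisenstein series (with the usual weight-two modification $E_2(z) - \ell E_2(\ell z)$), shows that $f_{\mathrm{Eis}}$ is forced to be a constant.

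The main obstacle is the explicit bookkeeping of how $w_\ell$ acts on the old and Eisenstein subspaces, which becomes particularly delicate when $M$ is not squarefree and in weight two (where $E_2$ itself is not modular). Mazur carried out this computation for $M$ a prime larger than $5$ \cite[Lemma II.5.9]{mazur1978}, and Ohta extended it to squarefree $M$ \cite[Corollary (2.1.4)]{ohta2014}. For general $M$, an alternative inductive approach is available: for each prime $\ell \mid M$, combining the $w_\ell$-eigenform condition with the vanishing of $a_n(f)$ for $(n,M)=1$ expresses $f$ as coming from level $M/\ell$ via a trace-type map, reducing the number of prime divisors of the level by one and ultimately reducing to level $1$, where the statement is trivial.
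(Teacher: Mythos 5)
The paper does not actually prove this lemma: it is quoted as a known result, with the proof deferred entirely to Mazur \cite[Lemma II.5.9]{mazur1978} (for $M>5$ prime) and Ohta \cite[Corollary (2.1.4)]{ohta2014} (in general) --- the same sources you fall back on. So if your ``proof'' is ultimately the citation, it matches the paper. But the independent argument you sketch around that citation has two genuine gaps, and it is worth seeing why the statement cannot be reduced to the classical setting in the way you propose.

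First, the base-change reduction fails at the outset. You define $V_\epsilon$ over $\Z[1/2M]$ as the solution space of the linear conditions $(w_\ell-\epsilon_\ell)g=0$ and $a_n(g)=0$ for $(n,M)=1$, and assert that a form $f$ over $R$ satisfying these conditions lies in $V_\epsilon\otimes_{\Z[1/2M]}R$. Kernels of maps of free modules do not commute with non-flat base change: $\ker(\phi\otimes R)$ can be strictly larger than $\ker(\phi)\otimes R$ (e.g.\ multiplication by $p$ on $\Z$ versus on $\F_p$). This is not a technicality here --- the whole point of the lemma in this paper is to apply it to forms over $\F_p$ and $\Z/p^{m}\Z$ whose coefficients merely satisfy congruences (see the proof of Proposition \ref{prop:Atkin-Lehner duality}, where $\bar f$ is the reduction of an integral form with $a_n(f)\equiv 0\pmod p$ for $(n,N)=1$, not $a_n(f)=0$). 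Such an $f$ does not lift to an element of $V_\epsilon$, so the lemma genuinely cannot be checked ``after $\otimes\,\C$ and $\otimes\,\F_p$'' of an integral solution space. Second, even granting a reduction to fields, your treatment of the residue-characteristic-$p$ case invokes the decomposition $f=f_{\mathrm{cusp}}+f_{\mathrm{Eis}}$ and Atkin--Lehner newform theory, neither of which is available over $\F_p$: cusp forms can be congruent to Eisenstein series mod $p$ (this failure of the decomposition is precisely the subject of the paper), and mod-$p$ multiplicity one and the structure theory of the old subspace fail in general. The correct mechanism --- and essentially Ohta's proof --- is the one you only gesture at in your last sentence: use the $q$-expansion principle over the given ring $R$ directly, together with the identity relating $w_\ell$, $U_\ell$, and the level-raising map at $\ell$, to show that $f$ descends to level $M/\ell$, and induct down to level $1$. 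That descent step is where all the content lies, and it is not carried out in your sketch.
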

Note that the lemma requires that the level be invertible in the coefficient ring. We want to apply the lemma to rings $R$ where $p \notin R^\times$. The following result, which essentially says that ordinary forms are old-at-$p$ when the weight is at least 3, is useful to remove $p$ from the level. It was proven by Gouv\^{e}a \cite[Lemma 3]{gouvea1992} for cusp forms, and the same proof works for modular forms (c.f.~
\cite[Proposition 1.3.2]{ohta2005}).
\begin{lem}
\label{lem:gouvea}
Let $M$ be any integer that is prime to $p$ and let $k>2$. Then the map
\[
M_k(\Gamma_1(M),\Z_p)^{\ord} \to M_k(\Gamma_1(M) \cap \Gamma_0(p),\Z_p)^\ord
\]
is an isomorphism.
\end{lem}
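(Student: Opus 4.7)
The plan is to combine the $p$-old/$p$-new decomposition at level $Mp$ with a sharp bound on $U_p$-eigenvalues of $p$-newforms. Consider the two degeneracy maps $\alpha_1,\alpha_p : M_k(\Gamma_1(M),\Z_p) \to M_k(\Gamma_1(M)\cap \Gamma_0(p),\Z_p)$ defined by $\alpha_1(f)(z)=f(z)$ and $\alpha_p(f)(z)=f(pz)$. A direct calculation on $q$-expansions gives the relations $U_p\,\alpha_1(f) = \alpha_1(T_pf) - p^{k-1}\dia{p}\,\alpha_p(f)$ and $U_p\,\alpha_p(f)=\alpha_1(f)$, so the image of $\alpha_1\oplus\alpha_p$ is $U_p$-stable. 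By the integral Atkin--Lehner theory provided by Lemma \ref{lem:ohta Atkin-Lehner}, the map $\alpha_1\oplus\alpha_p$ is injective and identifies $M_k(\Gamma_1(M),\Z_p)^{\oplus 2}$ with a Hecke-stable direct $\Z_p$-summand, the $p$-old part, whose complement consists of $p$-newforms.

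The first step is to show that the $p$-new summand has no ordinary contribution when $k>2$. A $p$-new eigenform $f$ of weight $k$ at level $Mp$ with $p\nmid M$ has local component at $p$ an unramified twist of Steinberg; equivalently, the local Atkin--Lehner theory gives $a_p(f)^2 = p^{k-2}\dia{p}$, so $a_p(f)$ has $p$-adic valuation $(k-2)/2>0$. Hence the ordinary projector $e=\lim_n U_p^{n!}$ kills the $p$-new part. The second step is to analyze $U_p$ on the $p$-old part. For an eigenform $f\in M_k(\Gamma_1(M),\Z_p)$ with $T_pf=a_pf$, the operator $U_p$ on the rank-$2$ $\Z_p$-module $\Z_p\alpha_1(f)\oplus\Z_p\alpha_p(f)$ has characteristic polynomial $X^2 - a_pX + p^{k-1}\dia{p}$. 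A Newton-polygon analysis shows that this polynomial has a unit root if and only if $a_p \in \Z_p^\times$, and in that case exactly one root is a unit. Applying Hensel's lemma produces a $U_p$-eigenvector in the ordinary subspace only when $f$ is ordinary at level $M$, and the ordinary $p$-stabilization $f\mapsto \alpha_1(f)-\beta\alpha_p(f)$ (with $\beta$ the non-unit root) furnishes the natural map and exhibits it as an isomorphism onto $M_k(\Gamma_1(M)\cap\Gamma_0(p),\Z_p)^{\ord}$.

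The main obstacle is doing all of this integrally over $\Z_p$ rather than semisimply over $\Q_p$, where one could just diagonalize. The key input that makes the integral argument go through is Lemma \ref{lem:ohta Atkin-Lehner}, which ensures that the $p$-old/$p$-new decomposition splits over $\Z_p$; once this is in hand, the Newton-polygon and Hensel argument on the explicit $2\times 2$ matrix is routine and preserves integrality, since the ordinary projector is itself integral on a finitely generated $\Z_p$-module.
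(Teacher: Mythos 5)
Your argument contains a genuine gap at the integrality step, and the citation of Lemma~\ref{lem:ohta Atkin-Lehner} is doing work it cannot do.

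The statement you want --- that $\alpha_1\oplus\alpha_p$ splits off the $p$-old part as a Hecke-stable $\Z_p$-direct summand of $M_k(\Gamma_1(M)\cap\Gamma_0(p),\Z_p)$ --- is not a consequence of Lemma~\ref{lem:ohta Atkin-Lehner}, for two reasons. First, that lemma requires the level to be invertible in the coefficient ring (it is stated for $\Z[1/2M]$-algebras), but you would need to apply it at level $Mp$ with $\Z_p$-coefficients, where $p$ is not invertible; this is precisely the obstruction that Lemma~\ref{lem:gouvea} exists to circumvent (see the sentence immediately following the statement of Lemma~\ref{lem:ohta Atkin-Lehner} in the text). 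Second, even where it applies, Lemma~\ref{lem:ohta Atkin-Lehner} is a $q$-expansion multiplicity-one statement (``if $a_n(f)=0$ for $(n,M)=1$ then $f$ is constant''), not a statement about old/new decompositions. And the integral $p$-old/$p$-new decomposition does in fact fail in general: level-raising congruences mean the sublattice $\alpha_1(M_k(\Gamma_1(M),\Z_p))+\alpha_p(M_k(\Gamma_1(M),\Z_p))$ need not be saturated.

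Once the integral splitting is removed, your Newton-polygon and Hensel analysis proves the map is an isomorphism after $\otimes\Q_p$ (and that part is essentially correct: $p$-new forms of weight $k>2$ have $U_p$-slope $(k-2)/2>0$, hence are killed by $e$), but an injection of free $\Z_p$-modules of equal rank can have torsion cokernel, so $\Q_p$-bijectivity does not give $\Z_p$-bijectivity. The real content of the lemma is surjectivity over $\Z_p$: given an ordinary $g$ at level $Mp$ over $\Z_p$, you must produce a $\Z_p$-integral preimage at level $M$. Gouv\^{e}a's proof (and Ohta's version for modular forms) does this by exhibiting an explicit inverse via the trace map from level $Mp$ down to level $M$ and checking that the resulting coefficients are $p$-integral; it is at this point, and not only in killing the $p$-new part, that the hypothesis $k>2$ is essential. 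Your sketch never constructs an integral inverse, so the proof as written does not establish the $\Z_p$-statement.
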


\subsubsection{Duality} The key to the proof of the control theorem will be the following duality result, which is the analog of Lemma \ref{lem:classical duality}.
\begin{prop}
\label{prop:Atkin-Lehner duality}
For any $r \ge 0$ and $k\ge 2$, consider the pairings
\[
(-,-)_{r,k}:m_k(\Gamma_r,\Z_p)^{\ord} \times \fH_{r,k}^{\ord} \to \Z_p, \ (-,-)^0_{r,k}:S_k(\Gamma_r,\Z_p)^{\ord} \times \h_{r,k}^{\ord} \to \Z_p
\]
given by $(f,T) \mapsto a_1(Tf)$.  
\begin{enumerate}
\item For any $r,k$, the resulting maps
\[
\fH_{r,k}^{\ord} \to \Hom_{\Z_p}(m_k(\Gamma_r,\Z_p)^{\ord},\Z_p), \ \h_{r,k}^{\ord} \to \Hom_{\Z_p}(S_k(\Gamma_r,\Z_p)^{\ord},\Z_p)
\]
are injective.
\item For $r=0$ and any $k>2$, the pairings are perfect.
\end{enumerate}
\end{prop}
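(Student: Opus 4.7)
The proposition concerns the pairing $(f,T)\mapsto a_1(Tf)$ between ordinary (cusp)forms and the variant ordinary Hecke algebra $\fH_{r,k}^\ord$, which differs from the classical $\fH'_{r,k}$ only in that $w_\ell$ replaces $U_\ell$ for $\ell\mid N/p$.

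For part~(1) the plan is to reduce to a semisimplicity argument after base change to $\overline{\Q}_p$. Both $\fH_{r,k}^\ord$ and $m_k(\Gamma_r,\Z_p)^\ord$ are $\Z_p$-torsion-free (the former as a subring of the endomorphism algebra of the latter), so it suffices to show the maps induced by the pairing are injective after $\otimes_{\Z_p}\overline{\Q}_p$. Over $\overline{\Q}_p$ the ordinary subspace decomposes into Hecke eigenspaces indexed by normalized eigensystems $g$, and since the $T_q$ for $(q,N)=1$ already distinguish these by strong multiplicity one, every $T \in \fH_{r,k}^\ord$ acts as a scalar $\lambda_T(g)$ on the $g$-eigenline; on that eigenline the pairing reads $(cg,T)\mapsto c\lambda_T(g)$, which is manifestly non-degenerate on both sides.

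For part~(2) the hypotheses $r=0$ and $k>2$ allow invoking Gouvea's Lemma~\ref{lem:gouvea}, which identifies $m_k(\Gamma_0(N),\Z_p)^\ord$ with the $T_p$-ordinary part of $m_k(\Gamma_0(M),\Z_p)$ via $p$-stabilization, where $M=N/p$. The crucial benefit is that $M$ is coprime to $p$, hence invertible in $\Z_p$, so Ohta's Atkin--Lehner Lemma~\ref{lem:ohta Atkin-Lehner} is available at level $\Gamma_0(M)$. To prove non-degeneracy on the form side, suppose $a_1(Tf)=0$ for all $T$. Taking $T=U_p^j T_n$ with $(n,N)=1$ forces $a_m(f)=0$ for every $m$ coprime to $M$. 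Decomposing $f$ into $w_\ell$-eigencomponents for $\ell\mid M$ via the projectors $\tfrac{1}{2}(1\pm w_\ell)$ (which live in $\fH_{0,k}^\ord\otimes\Z_p[\tfrac{1}{2}]$), I may assume $f$ is a simultaneous $w_\ell$-eigenform. Its Gouvea lift $f_0$ at level $\Gamma_0(M)$ then satisfies $a_m(f_0)=0$ for all $(m,M)=1$ and is a $w_\ell$-eigenform for every $\ell\mid M$, so Ohta's lemma forces $f_0$ to be constant, and a constant of weight $k\ge 2$ must vanish. Non-degeneracy on the Hecke side follows from the form-side statement applied to $Tf$, combined with the faithfulness of $\fH_{0,k}^\ord$ on $m_k^\ord$.

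To upgrade from non-degeneracy to perfectness over $\Z_p$, use that both modules are free of equal rank (this rank equality follows from part~(1) combined with the classical duality of Lemma~\ref{lem:classical duality}), so the injective maps induced by the pairing become isomorphisms after $\otimes_{\Z_p}\Q_p$; they become isomorphisms already over $\Z_p$ iff the pairing remains non-degenerate modulo $p$. This mod-$p$ non-degeneracy follows by rerunning the Gouvea--Ohta argument in characteristic $p$, which remains valid because $p\nmid M$. The main obstacle I anticipate is the bookkeeping needed to extend Gouvea's lemma from $M_k$ to $m_k$, to verify compatibility of the $w_\ell$-eigenspace decomposition with $p$-stabilization, and, in the modular (non-cuspidal) case, to rule out Eisenstein-type constants with non-integral $a_0$; the cuspidal case is cleaner because constants are excluded a priori.
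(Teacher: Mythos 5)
Your overall strategy (non-degeneracy over $\Q_p$ via Atkin--Lehner theory, then an integral refinement using Lemma \ref{lem:gouvea} and Lemma \ref{lem:ohta Atkin-Lehner}) is the same as the paper's, but two steps are not actually closed. First, in part (1), the claim that the pairing is ``manifestly non-degenerate'' on eigenlines over $\overline{\Q}_p$ hides the real content: strong multiplicity one only separates eigensystems for the $T_q$ with $(q,N)=1$, and the corresponding eigenspaces in $m_k(\Gamma_r)^{\ord}$ are the oldform spaces, which are not lines. You must further decompose using $U_p$ and the $w_\ell$, check that the joint eigenspaces are one-dimensional, and --- crucially --- that each has a generator with $a_1\neq 0$ (an eigenvector built from $g(dz)$'s could a priori have vanishing first coefficient, and the Eisenstein part must be included too). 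These facts are exactly what Lemma \ref{lem:ohta Atkin-Lehner} packages; the paper simply applies that lemma directly over $\Q_p$, where $N$ is invertible, to get perfectness of the rational pairing in one stroke.

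Second, and more seriously, your route to integral perfectness in part (2) --- ``free of equal rank, so it suffices to check non-degeneracy mod $p$, which follows by rerunning the Gouv\^ea--Ohta argument in characteristic $p$'' --- does not go through as stated. Rerunning the argument mod $p$ only yields that the reduction is \emph{constant}, and in characteristic $p$ nonzero constants of weight $k$ do exist when $(p-1)\mid k$ (the Hasse invariant); a class in $m_k(\Gamma_0,\Z_p)^{\ord}/p$ reducing to a nonzero constant would be killed by the pairing, since $a_1(Tf)=0$ for constant $f$. Moreover, for elements of $m_k$ with non-integral $a_0$ the ``reduction mod $p$'' of the $q$-expansion is not even well-defined at the constant term. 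Ruling this out is precisely the mathematical content of the integral statement, and it is where the paper does its real work: Lemma \ref{lem:mk def} introduces the invariant $m_{k,\epsilon}$ measuring the denominator of $a_0$ on $m_k(\Gamma_0,\Z_p)^{\ord,\epsilon}$, produces an auxiliary form $h\equiv 1 \pmod{p^{m_{k,\epsilon}}}$ with $a_0(h)=1$, and the proof of part (2) runs a two-case analysis (integral versus non-integral $a_0(f)$) in which $h$ is subtracted off to kill the constant term before Lemma \ref{lem:ohta Atkin-Lehner} is applied at level $N/p$. What you defer as ``bookkeeping'' in your final sentence is exactly this argument, and without it the proof of part (2) is incomplete.
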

For the proof of the proposition, we need a lemma comparing $m_k(\Gamma_0,\Z_p)^{\ord,\epsilon}$ to $M_k(\Gamma_0,\Z_p)^{\ord,\epsilon}$.
\begin{lem}
\label{lem:mk def}
For any $\epsilon \in \{\pm 1\}^r$, there is an exact sequence
\[
0 \to M_k(\Gamma_0,\Z_p)^{\ord,\epsilon} \to m_k(\Gamma_0,\Z_p)^{\ord,\epsilon} \xrightarrow{a_0} \left(\frac{1}{p^{m_{k,\epsilon}}}\Z_p\right)/\Z_p \to 0
\]
for some integer $m_{k,\epsilon} \ge 0$. Moreover, if we define $m'$ as the maximal integer $m$ such that there exists $h \in M_k(\Gamma_0,\Z_p)^{\ord,\epsilon}$ with $h \equiv 1 \pmod{p^m}$ and $a_0(h)=1$, then $m_{k,\epsilon}=m'$.
\end{lem}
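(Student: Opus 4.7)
The plan is to first establish finite generation of $m_k(\Gamma_0,\Z_p)^{\ord,\epsilon}$ as a $\Z_p$-module, which forces the image of $a_0$ in $\Q_p/\Z_p$ to be a cyclic submodule of bounded exponent. The ``moreover'' clause then follows by an explicit manipulation with powers of $p$ and units.

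For finite generation, the ambient $\Q_p$-space $V := M_k(\Gamma_0,\Q_p)^{\ord,\epsilon}$ is finite-dimensional, of some dimension $d$. Since $k \ge 2$, a holomorphic modular form whose coefficients $a_n$ all vanish for $n \ge 1$ would be a nonzero constant function on the upper half-plane, which is incompatible with the weight-$k$ transformation law; hence $V \to \prod_{n \ge 1}\Q_p$, $f \mapsto (a_n(f))_{n \ge 1}$, is injective. Choose $n_1,\dots,n_d \ge 1$ so that $f \mapsto (a_{n_i}(f))_{i=1}^d$ is a $\Q_p$-linear isomorphism $V \isoto \Q_p^d$; the preimage of $\Z_p^d$ is a $\Z_p$-lattice in $V$ that contains $m_k(\Gamma_0,\Z_p)^{\ord,\epsilon}$ by definition. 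The composite $a_0 \colon m_k(\Gamma_0,\Z_p)^{\ord,\epsilon} \to \Q_p \to \Q_p/\Z_p$ has kernel exactly $M_k(\Gamma_0,\Z_p)^{\ord,\epsilon}$, because any element of $m_k$ whose constant term already lies in $\Z_p$ has every coefficient in $\Z_p$. The image of $a_0$ in $\Q_p$ is a finitely generated $\Z_p$-submodule of $\Q_p$, hence equals $p^{-m}\Z_p$ for some $m \in \Z$; setting $m_{k,\epsilon} := \max(m,0)$ gives the stated exact sequence.

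For the equality $m' = m_{k,\epsilon}$: if $h \in M_k(\Gamma_0,\Z_p)^{\ord,\epsilon}$ satisfies $h \equiv 1 \pmod{p^m}$ and $a_0(h) = 1$, then $p^{-m} h$ lies in $m_k(\Gamma_0,\Z_p)^{\ord,\epsilon}$ with $a_0 = p^{-m}$, so $m \le m_{k,\epsilon}$; this yields $m' \le m_{k,\epsilon}$. Conversely, pick $f \in m_k(\Gamma_0,\Z_p)^{\ord,\epsilon}$ with $a_0(f) = u \cdot p^{-m_{k,\epsilon}}$ for some $u \in \Z_p^\times$; then $h := u^{-1} p^{m_{k,\epsilon}} f$ lies in $M_k(\Gamma_0,\Z_p)^{\ord,\epsilon}$, satisfies $a_0(h)=1$, and has $h \equiv 1 \pmod{p^{m_{k,\epsilon}}}$, giving $m_{k,\epsilon} \le m'$. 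The only substantive step is the finite-generation argument via the $q$-expansion principle; the rest is routine bookkeeping with $p$-adic valuations.
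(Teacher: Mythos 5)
Your proof is correct and follows essentially the same route as the paper's: finite generation of $m_k(\Gamma_0,\Z_p)^{\ord,\epsilon}$ forces the image of $a_0$ in $\Q_p/\Z_p$ to be finite cyclic, and the equality $m'=m_{k,\epsilon}$ is obtained by rescaling by $p^{\pm m}$ (and a unit) in both directions, exactly as in the paper. The only difference is that you supply an explicit justification, via the $q$-expansion principle and a lattice in $M_k(\Gamma_0,\Q_p)^{\ord,\epsilon}$, for the finite generation that the paper simply asserts.
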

\begin{proof}
We have an exact sequence
\[
0 \to M_k(\Gamma_0,\Z_p)^{\ord,\epsilon} \to m_k(\Gamma_0,\Z_p)^{\ord,\epsilon} \xrightarrow{a_0} \Q_p/\Z_p.
\]
Since $m_k(\Gamma_0,\Z_p)^{\ord,\epsilon}$ is finitely generated, the image of the $a_0$ map is a finite cyclic $p$-group. We define $m_{k,\epsilon} \ge 0$ so that the order of this group is $p^{m_{k,\epsilon}}$, and we get the desired exact sequence. 

If $h' \in m_k(\Gamma_0,\Z_p)^{\ord,\epsilon}$ satisfies $a_0(h')=\frac{1}{p^{m_{k,\epsilon}}}$, then $h:=p^{m_{k,\epsilon}}h' \in M_k(\Gamma_0,\Z_p)^{\ord,\epsilon}$ satisfies $h \equiv 1 \pmod{p^{m_{k,\epsilon}}}$ and $a_0(h)=1$, so $m_{k,\epsilon} \le m'$. Conversely, if $h \equiv 1 \pmod{p^{m'}}$ and $a_0(h)=1$, then $h':=p^{-m'}h \in m_k(\Gamma_0,\Z_p)^{\ord,\epsilon}$ and $a_0(h')=p^{-m'}$ so $m_{k,\epsilon} \ge m'$.
\end{proof}
\begin{rem}
It is known that $m_{k,\epsilon}=0$ if $(p-1) \nmid k$, but we shall not use this.
\end{rem}
\begin{proof}[Proof of Proposition \ref{prop:Atkin-Lehner duality}]
We give the proof only for the modular case, the cuspidal case being similar but easier. We can decompose $m_k(\Gamma_0,\Z_p)^{\ord}$ and $\fH_{0,k}^{\ord}$ into eigenspaces for the Aktin-Lehner operators. It is enough to show that, for any choice of $\epsilon$, the induced pairing
\[
(-,-)_{r,k}:m_k(\Gamma_r,\Z_p)^{\ord,\epsilon} \times \fH_{r,k}^{\ord,\epsilon} \to \Z_p,
\]
has the desired properties.

We first show that the pairing is perfect with $\Q_p$-coefficients, which will imply (1). Suppose that $f \in M_k(\Gamma_r,\Q_p)^{\ord,\epsilon}$ satisfies $(f,T)_{r,k}=0$ for all $T \in \fH_{r,k}^{\ord,\epsilon}[1/p]$. This implies that $a_n(f)=0$ for all $(n,N/p)=1$. By Lemma \ref{lem:ohta Atkin-Lehner}, this implies that $f$ is constant, and hence $0$. Conversely, suppose that $T \in \fH_{r,k}^{\ord,\epsilon}[1/p]$ satisfies $(f,T)_{r,k}=0$ for all $f \in M_k(\Gamma_r,\Q_p)^{\ord,\epsilon}$. Then for any $g \in M_k(\Gamma_r,\Q_p)^{\ord,\epsilon}$, we have $a_n(Tg)=(T_ng,T)_{r,k}=0$ for all $(n,N/p)=1$. The same lemma then implies that $Tg=0$, which implies that $T=0$ since $\fH_{r,k}^{\ord,\epsilon}[1/p]$ acts faithfully on $M_k(\Gamma_r,\Q_p)^{\ord,\epsilon}$.

Now we consider (2). By the perfectness of the $\Q_p$-pairing, we have that
\[
m_k(\Gamma_0,\Z_p)^{\ord,\epsilon} \to \Hom_{\Z_p}(\fH_{0,k}^{\ord,\epsilon}, \Z_p)
\]
is injective. We have to show it is surjective. Let $\phi : \fH_{0,k}^{\ord,\epsilon} \to \Z_p$. We see that there is $f' \in M_k(\Gamma_0,\Q_p)^{\ord,\epsilon}$ such that $\phi(T)=(T,f')_k$ for all $T \in \fH_{0,k}^{\ord,\epsilon}$. This implies that $a_n(f')=\phi(T_nf') \in \Z_p$ for all $(n,N/p)=1$, where $T_n:=U_{p^r}T_{n/p^r}$ if $p^r||n$. If $f' \in m_k(\Gamma_0,\Z_p)^{\ord,\epsilon}$, we are done, so assume for a contradiction that it is not. Then there is a minimal $r \ge 1$ such that $f=p^rf' \in m_k(\Gamma_0,\Z_p)^{\ord,\epsilon}$. By the minimality of $r$, we must have $a_n(f) \in \Z_p^\times$ for some $n>0$ (clearly this $n$ must have $(n,N/p)>1$).

Assume that $f \in M_k(\Gamma_0,\Z_p)^{\ord,\epsilon}$. By Lemma \ref{lem:gouvea}, we may consider $f \in M_k(\Gamma_0(N/p),\Z_p)^{\ord,\epsilon}$. Consider the image $\bar{f}$ in $M_k(\Gamma_0(N/p),\F_p)^{\ord,\epsilon}$. Since $a_n(f') \in \Z_p$ for all $(n,N)=1$ and since $r \ge 1$, we have $a_n(\bar{f})=0$ for all $(n,N)=1$. As $N/p \in \F_p^\times$, Lemma \ref{lem:ohta Atkin-Lehner} applies and we get that $\bar{f}$ is constant. This implies that $a_n(f) \equiv 0 \pmod{p}$ for all $n>0$, a contradiction.

Finally, assume that $f \not \in M_k(\Gamma_0,\Z_p)^{\ord,\epsilon}$, so $s:=-\mathrm{val}_p(a_0(f))>0$. Then $g:=a_0(f)^{-1}f \in M_k(\Gamma_0,\Z_p)^{\ord,\epsilon}$ satisfies $a_0(g)=1$. By Lemma \ref{lem:gouvea}, we may consider $g \in M_k(\Gamma_0(N/p),\Z_p)^{\ord,\epsilon}$. Since $a_n(f') \in \Z_p$ for all $(n,N)=1$, we have $a_n(g)=0 \pmod{p^{r+s}}$ for all $(n,N)=1$. As $N/p \in (\Z/p^{r+s}\Z)^\times$, Lemma \ref{lem:ohta Atkin-Lehner} implies that $g \pmod{p^{r+s}}$ is the constant $a_0(g)=1$. By Lemma \ref{lem:mk def}, and since $r>0$, this implies that $s<m_{k,\epsilon}$.

Now fix $h \in M_k(\Gamma_0,\Z_p)^{\ord,\epsilon}$ such that $h \equiv 1 \pmod{p^{m_{k,\epsilon}}}$ and such that $a_0(h)=1$. Since $\mathrm{val}_p(a_0(f))=-s$ and $s<m_{k,\epsilon}$, we see that $a_n(a_0(f)h) \in p\Z_p$ for all $n>0$. Then letting $f''=f-a_0(f)h$, we see that $f'' \in M_k(\Gamma_0(N/p),\Z_p)^{\ord,\epsilon}$. Moreover, we see that $a_n(f'')=p^ra_n(f')-a_n(a_0(f)h) \equiv 0 \pmod{p}$, for all $(n,N/p)=1$. By Lemma \ref{lem:ohta Atkin-Lehner} this implies that $f''\pmod{p}$ is constant. In particular, for all $n>0$ we have
\[
0 \equiv a_n(f'')\equiv a_n(f) \pmod{p},
\]
a contradiction.
\end{proof}

\subsubsection{Control theorem} For $k$ fixed, let
\[
\fH^\ord_k = \varprojlim_{r \ge 0} \fH^\ord_{r,k}, \ \h^\ord_k = \varprojlim_{r \ge 0} \h^\ord_{r,k}
\]
where the transition maps send $T_q$ and $w_\ell$ to the operator with the same name (this is well-defined by Lemma \ref{lem:compat of w_ell}). These are algebras over the Iwasawa algebra $\Lambda = \Z_p \lb \Z_p^\times \rb$, via the diamond operator action. We can now prove the main theorem, which is a control theorem for the algebras $\fH^\ord_k$ and $\h^\ord_k$.
\begin{thm}
\label{thm:atkin-lehner control}
The algebras $\fH^{\ord}_k$ and $\h^\ord_k$ for various $k$ are canonically identified with each other. We can and do drop the subscript $k$ from the notation and simply refer to these algebras as $\fH^{\ord}$ and $\h^\ord$. Moreover:
\begin{enumerate}
\item There are canonical isomorphisms of $\Lambda$-modules $\fH^{\ord} \to \fH'^{\ord}$ and $\h^{\ord} \to \h'^{\ord}$. In particular, $\fH^{\ord}$ and $\h^\ord$ are free $\Lambda$-modules of finite rank.
\item The pairings 
\[
m_\Lambda^\ord \times \fH^{\ord} \to \Lambda, \ S_\Lambda^\ord \times \h^{\ord} \to \Lambda,
\]
given by $(f,T) \mapsto a_1(Tf)$, are perfect.
\item The natural maps 
\[
\fH^{\ord}/\omega_{r,k}\fH^{\ord} \to \fH^{\ord}_{r,k}, \ \h^{\ord}/\omega_{r,k} \h^{\ord} \to \h^{\ord}_{r,k}
\]
are isomorphisms for all $r\ge0$ and $k\ge2$.
\item The pairings $(-,-)_{r,k}$ and $(-,-)_{r,k}^0$ of Proposition \ref{prop:Atkin-Lehner duality} are perfect for all  $r\ge0$ and $k\ge2$.
\end{enumerate}
\end{thm}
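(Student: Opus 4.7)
The plan is to establish the $\Lambda$-adic perfect pairing (statement (2)) by a Nakayama bootstrap from Proposition \ref{prop:Atkin-Lehner duality}, then derive (1) by comparison with classical Hida duality, and finally deduce (3) and (4) by compatibility.

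For statement (2), I would first note that $\fH^\ord$ is a finitely generated $\Lambda$-module: it acts faithfully on $m_\Lambda^\ord$, which is $\Lambda$-finite by Theorem \ref{thm:classical hida theory}, so $\fH^\ord$ embeds into the $\Lambda$-finite module $\End_\Lambda(m_\Lambda^\ord)$. The pairing map $\phi : \fH^\ord \to D$, where $D := \Hom_\Lambda(m_\Lambda^\ord, \Lambda)$, is $\Lambda$-linear and injective by the same faithfulness. For surjectivity, fix an auxiliary weight $k_1 > 2$. By classical Hida control and compatibility of $\Hom$ with base change for the $\Lambda$-free module $m_\Lambda^\ord$, the reduction $\phi \bmod \omega_{0,k_1}$ factors as
\[
\fH^\ord/\omega_{0,k_1}\fH^\ord \twoheadrightarrow \fH^\ord_{0,k_1} \isoto \Hom_{\Z_p}(m_{k_1}(\Gamma_0,\Z_p)^\ord, \Z_p) \cong D/\omega_{0,k_1}D,
\]
where the first arrow is the tautological restriction-of-operators map and the middle isomorphism is supplied by Proposition \ref{prop:Atkin-Lehner duality}(2), applicable because $k_1 > 2$. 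Thus $\phi$ is surjective modulo $\omega_{0,k_1}$, and Nakayama's lemma, applied with $\omega_{0,k_1}$ in the maximal ideal of $\Lambda$, lifts this to surjectivity of $\phi$. The cuspidal version is parallel, using $S_\Lambda^\ord$, $\h^\ord$, and the cuspidal half of Proposition \ref{prop:Atkin-Lehner duality}.

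Statement (1) is then immediate: both $\fH^\ord$ (by (2)) and $\fH'^\ord$ (by Theorem \ref{thm:classical hida theory}(2)) are canonically $\Lambda$-isomorphic to $D$, hence to each other as $\Lambda$-modules; since $D$ is intrinsic to $m_\Lambda^\ord$, this also yields the $k$-independence of $\fH^\ord$ and the $\Lambda$-freeness of finite rank inherited from $\fH'^\ord$. For (3), the tautological surjection $\fH^\ord/\omega_{r,k}\fH^\ord \twoheadrightarrow \fH^\ord_{r,k}$ has source $\Z_p$-free of rank $\rank_\Lambda(\fH^\ord) \cdot \rank_{\Z_p}(\Lambda/\omega_{r,k}) = \rank_{\Z_p}(\fH'^\ord_{r,k}) = \rank_{\Z_p}(m_k(\Gamma_r,\Z_p)^\ord)$, the first equality via (1) and classical control and the second via Lemma \ref{lem:classical duality}. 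The target embeds into $\Hom_{\Z_p}(m_k(\Gamma_r,\Z_p)^\ord, \Z_p)$ by Proposition \ref{prop:Atkin-Lehner duality}(1), so has $\Z_p$-rank at most $\rank_{\Z_p}(m_k(\Gamma_r,\Z_p)^\ord)$, and it meets this upper bound by a dimension count after inverting $p$: on $m_k(\Gamma_r,\Q_p)^\ord$, both $w_\ell$ and $U_\ell$ separate the oldform pairs at each prime $\ell \mid N/p$, so $\fH^\ord_{r,k}[1/p]$ and $\fH'^\ord_{r,k}[1/p]$ have the same $\Q_p$-dimension. The surjection between $\Z_p$-free modules of equal rank is thus an isomorphism. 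Finally, (4) follows by reducing the $\Lambda$-adic perfect pairing of (2) modulo $\omega_{r,k}$ and invoking the control isomorphisms now established on both sides.

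The main technical obstacle is the hypothesis $k_1 > 2$ in Proposition \ref{prop:Atkin-Lehner duality}(2): the Nakayama bootstrap to $\Lambda$-adic perfectness must use such an auxiliary weight, and consequently the deduction of (3) at weight $k=2$ cannot invoke a direct perfect-pairing statement at that weight, but instead relies on the indirect rank comparison between $\fH^\ord_{r,k}$ and $\fH'^\ord_{r,k}$ sketched above.
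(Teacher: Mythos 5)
Your overall strategy is the same as the paper's: reduce everything to Proposition \ref{prop:Atkin-Lehner duality}(2) at level $\Gamma_0$ in weights $>2$, and propagate by Nakayama plus the classical control theorem. But there is one step that fails as written. You apply Nakayama ``with $\omega_{0,k_1}$ in the maximal ideal of $\Lambda$'' for a \emph{single} auxiliary weight $k_1>2$. In the appendix, $\Lambda=\Z_p\lb\Z_p^\times\rb$ is only semi-local: it decomposes into $p-1$ components $\Lambda^{(a)}$ and has $p-1$ maximal ideals, and a given $\omega_{0,k_1}$ lies in only the one corresponding to $a\equiv k_1$. So the vanishing of $\coker(\phi)/\omega_{0,k_1}\coker(\phi)$ kills only one eigencomponent of the cokernel, and your bootstrap does not yet give surjectivity of $\fH^\ord\to\Hom_\Lambda(m_\Lambda^\ord,\Lambda)$ on the other components. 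The paper's proof repairs exactly this by running the weight-$>2$ argument for all of $k=3,4,\dots,p+1$, whose $\omega_{0,k}$ land in the $p-1$ distinct maximal ideals, before invoking Nakayama. With that modification your proof of (2), and hence of (1), goes through. A second, smaller point: in your rank count for (3) at $k=2$, the assertion that $\fH^\ord_{r,k}[1/p]$ and $\fH'^\ord_{r,k}[1/p]$ have the same dimension is better justified by the perfectness of the $\Q_p$-valued pairing established inside the proof of Proposition \ref{prop:Atkin-Lehner duality} (which gives $\dim\fH^\ord_{r,k}[1/p]=\dim m_k(\Gamma_r,\Q_p)^\ord$) than by the heuristic that $w_\ell$ ``separates oldform pairs,'' which is not a complete argument.

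Granting the repaired Nakayama step, your route to (3) and (4) is genuinely different from the paper's and is valid. The paper organizes everything around the commutative square comparing $\fH^\ord/\omega_{r,k}\fH^\ord\to\fH'^\ord/\omega_{r,k}\fH'^\ord$ with $\fH^\ord_{r,k}\into\fH'^\ord_{r,k}$: once the top arrow is surjective, the bottom injection (Proposition \ref{prop:Atkin-Lehner duality}(1)) is forced to be an isomorphism, which \emph{is} statement (4), and (1)--(3) then fall out of the same diagram. You instead prove (3) by a $\Z_p$-rank comparison and deduce (4) by reducing the $\Lambda$-adic perfect pairing modulo $\omega_{r,k}$; this buys a direct treatment of the weight-$2$ case without routing through the diagram, at the cost of needing $\fH^\ord_{r,k}$ to be $\Z_p$-free of the correct rank, which your appeal to Proposition \ref{prop:Atkin-Lehner duality}(1) and the generic perfectness does supply.
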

\begin{proof}
We give the proofs only for modular case, the cuspidal cases being similar. First, we see that $\fH^{\ord}_k$ is the subalgebra of $\End_\Lambda(m^\ord_{k,\Lambda})$ generated by the operators $T_q$, $U_p$ and $w_\ell$. But by Theorem \ref{thm:classical hida theory}, $m^\ord_{k,\Lambda}$ is independent of $k$. This shows that these algebras are independent of $k$.

The map $\fH^{\ord} \to \fH'^{\ord}$ in (1) is given as the composite
\[
\fH^{\ord} \to \Hom_\Lambda(m^\ord_\Lambda,\Lambda) \isoto  \fH'^{\ord}
\]
where the first map is induced by the pairing in (2), and the second map is the isomorphism of Theorem \ref{thm:classical hida theory} (2).  Let $X=\coker(\fH^{\ord} \to \fH'^{\ord})$, which is a finitely generated $\Lambda$-module.

Similarly, for any fixed $k$ and $r$, we have a map $\fH^{\ord}_{r,k} \to \fH'^{\ord}_{r,k}$ defined as the composite 
\[
\fH^{\ord}_{r,k} \to \Hom_{\Z_p}(m_{k}(\Gamma_r,\Z_p)^\ord,\Z_p) \isoto \fH'^{\ord}_{r,k},
\]
where the first map is given by $(-,-)_{r,k}$ and the second map is the isomorphism given by Lemma \ref{lem:classical duality}. This map is an isomorphism for every $r$ and $k$ if and only if (4) holds.

We have a commutative diagram to compare these maps:
\begin{equation}
\label{eq:hida square}
\xymatrix{
\fH^{\ord}/\omega_{r,k}\fH^{\ord} \ar[r] \ar@{->>}[d] & \fH'^{\ord}/\omega_{r,k}\fH'^{\ord} \ar[d]^-\wr\\
\fH^{\ord}_{r,k} \ar@{^(->}[r]& \fH'^{\ord}_{r,k}.
}
\end{equation}
The leftmost vertical arrow is surjective because the operators $T_q$, $U_p$ and $w_\ell$ map to the operators of the same name, the lower horizontal arrow is injective by Proposition \ref{prop:Atkin-Lehner duality}(1), and the rightmost vertical arrow is an isomorphism by Theorem \ref{thm:classical hida theory} (3). For $r=0$ and $k>2$, Proposition \ref{prop:Atkin-Lehner duality}(2) implies that the lower horizontal arrow is an isomorphism. This implies that the map
\[
\fH^{\ord}/\omega_{0,k}\fH^{\ord} \to \fH'^{\ord}/\omega_{0,k}\fH'^{\ord}
\]
is surjective for all $k> 2$. In other words, we have $X/\omega_{0,k}X=0$ for all $k>2$. The elements $\omega_{0,3},\omega_{0,4},\dots, \omega_{0,p+1}$ are in the $p-1$ different maximal ideals of $\Lambda$, so $X=0$ by Nakayama's lemma. This implies that $\fH^{\ord} \to \fH'^{\ord}$ is surjective. 

Returning to the diagram \eqref{eq:hida square} for arbitrary $r\ge0$ and $k\ge 2$, we see that the lower horizontal arrow is also surjective, proving (4). Since the map $\fH^{\ord} \to \fH'^{\ord}$ is the inverse limit (for $k$ fixed and $r$ increasing) of these maps, it is also an isomorphism proving (1). By the definition of the map $\fH^{\ord} \to \fH'^{\ord}$, this also proves (2). This shows that all the arrows except the leftmost vertical in \eqref{eq:hida square} are isomorphisms, so it is too, proving (3).
\end{proof}

\bibliographystyle{alpha}
\bibliography{june2018}

\end{document}